\documentclass[letterpaper,11pt,reqno]{amsart} 
\usepackage[portrait,margin=1in]{geometry}
\usepackage{systeme}
\usepackage{mathrsfs,xfrac} 
\usepackage[colorlinks=true,linkcolor=blue,citecolor=blue,urlcolor=blue]{hyperref} 
\usepackage{amsmath,amssymb,amsthm,amsfonts,amsbsy,latexsym,dsfont,color} 
\usepackage[numeric,initials,nobysame]{amsrefs} 
\usepackage[foot]{amsaddr}

\usepackage[utf8]{inputenc}
\usepackage[english]{babel}
\usepackage{comment}

\usepackage[textsize=tiny]{todonotes}
\usepackage{regexpatch}
\makeatletter
\xpatchcmd{\@todo}{\setkeys{todonotes}{#1}}{\setkeys{todonotes}{inline,#1}}{}{}
\makeatother

\usepackage{enumerate}
\newenvironment{enumeratei}{\begin{enumerate}[\upshape i)]}{\end{enumerate}}
\newenvironment{enumeratea}{\begin{enumerate}[\upshape a)]\setlength{\itemsep}{1ex}}{\end{enumerate}}

\newenvironment{enumerateA}{\begin{enumerate}[\upshape (A)]\setlength{\itemsep}{1ex}}{\end{enumerate}}

\usepackage{paralist}

\sloppy

\newtheorem{thm}{Theorem}[section]
\newtheorem{lem}[thm]{Lemma}
\newtheorem{cor}[thm]{Corollary}
\newtheorem{prop}[thm]{Proposition}
\newtheorem{defn}[thm]{Definition}

\newtheorem{ass}[thm]{Assumption}

\theoremstyle{definition}
\newtheorem{rem}[thm]{Remark}

\renewcommand{\le}{\leqslant} 
\renewcommand{\ge}{\geqslant} 
\renewcommand{\leq}{\leqslant} 
\renewcommand{\geq}{\geqslant}

\newcommand{\ind}{\mathds{1}}
\newcommand{\eps}{\varepsilon}

\newcommand{\abs}[1]{\left\vert#1\right\vert}
\newcommand{\set}[1]{\left\{#1\right\}}

\newcommand{\ie}{\emph{i.e.,}}

\newcommand{\eg}{\emph{e.g.,}}

\newcommand{\probc}{\stackrel{\mathrm{P}}{\longrightarrow}}

\def\qed{ \hfill $\blacksquare$}  
\let\ga=\alpha \let\gb=\beta \let\gc=\gamma  
     \let\gl=\lambda


\newcommand{\cB}{\mathcal{B}}
\newcommand{\cF}{\mathcal{F}}
\newcommand{\cH}{\mathcal{H}}
\newcommand{\cL}{\mathcal{L}}
\newcommand{\cM}{\mathcal{M}}\newcommand{\cN}{\mathcal{N}}
\newcommand{\cP}{\mathcal{P}}
\newcommand{\cT}{\mathcal{T}}

\newcommand{\cY}{\mathcal{Y}}  

\newcommand{\vA}{\mathbf{A}}

\newcommand{\vQ}{\mathbf{Q}}

\newcommand{\vW}{\mathbf{W}}

\newcommand{\vs}{\mathbf{s}}\newcommand{\vt}{\mathbf{t}}
\newcommand{\vw}{\mathbf{w}}
 

\newcommand{\mvvarpi}{\boldsymbol{\varpi}}
   

\newcommand{\fN}{\mathfrak{N}}
\newcommand{\fP}{\mathfrak{P}}\newcommand{\fR}{\mathfrak{R}}
\newcommand{\fT}{\mathfrak{T}}
\newcommand{\fX}{\mathfrak{X}}

\newcommand{\fm}{\mathfrak{m}}
\newcommand{\fp}{\mathfrak{p}}


\newcommand{\bN}{\mathbb{N}}
\newcommand{\bR}{\mathbb{R}}
\newcommand{\bT}{\mathbb{T}}

\newcommand{\bZ}{\mathbb{Z}}        



\DeclareMathOperator{\E}{\mathds{E}}
\DeclareMathOperator{\pr}{\mathds{P}}

\DeclareMathOperator{\var}{Var}

\DeclareMathOperator{\N}{N}

\newcommand{\bbT}{\mathbb{T}}
\newcommand{\TT}{\mathcal{T}}
\newcommand{\bs}{\mathbf{s}}

\newcommand{\bt}{\mathbf{t}}
\newcommand{\bfomega}{{\boldsymbol \omega}}
\newcommand{\Zbold}{{\mathbb{Z}}}
\newcommand{\prob}{\mathbb{P}}
\definecolor{aqua}{rgb}{0.0, 1.0, 1.0}
\definecolor{boo}{rgb}{1.0, 0.0, 1.0}

\newcommand{\probfr}{\stackrel{\mbox{$\operatorname{P}$-\bf fr}}{\longrightarrow}}
\newcommand{\probcrf}{\stackrel{\mbox{$\operatorname{P}$-\bf efr}}{\longrightarrow}}

\newcommand{\Efr}{\stackrel{\mbox{$\E$-\bf fr}}{\longrightarrow}}

\newcommand{\convas}{\stackrel{\mathrm{a.s.}}{\longrightarrow}}
\newcommand{\convd}{\stackrel{d}{\longrightarrow}}
\newcommand{\convp}{\stackrel{P}{\longrightarrow}}

\newtheorem{lemma}[thm]{Lemma}

\newcommand{\stod}{\preceq_{\mathrm{st}}}
 \DeclareMathOperator{\BP}{BP}

\newcommand{\spec}{{\sf spectral} }
\newcommand{\fringe}{{\sf fringe} }

\usepackage[mathscr]{euscript}
\DeclareMathAlphabet{\mathscrbf}{OMS}{mdugm}{b}{n}

\newcommand{\sch}{{\mathcalb{h}}}

\newcommand{\tikzcircle}[2][blue,fill=blue]{\tikz[baseline=-0.5ex]\draw[#1,radius=#2] (0,0) circle ;}%
\newcommand{\cic}{\tikzcircle{1.5pt}}


\DeclareFontFamily{U}{BOONDOX-calo}{\skewchar\font=45 }
\DeclareFontShape{U}{BOONDOX-calo}{m}{n}{
  <-> s*[1.05] BOONDOX-r-calo}{}
\DeclareFontShape{U}{BOONDOX-calo}{b}{n}{
  <-> s*[1.05] BOONDOX-b-calo}{}
\DeclareMathAlphabet{\mathcalb}{U}{BOONDOX-calo}{m}{n}
\SetMathAlphabet{\mathcalb}{bold}{U}{BOONDOX-calo}{b}{n}
\DeclareMathAlphabet{\mathbcalb}{U}{BOONDOX-calo}{b}{n}

\newcommand{\sss}{\scriptscriptstyle}
\newcommand{\fpm}{\mvvarpi}
\newcommand{\fplus}[1]{\lfloor #1\rfloor}
\usepackage{tikz}
\usetikzlibrary{calc,intersections,through,backgrounds,shapes.geometric}
\usetikzlibrary{graphs}

\usetikzlibrary{calc,intersections,through,backgrounds,shapes.geometric}
\usetikzlibrary{graphs}
\tikzset{every path/.style={line width=.07 cm}}

\usepackage{subcaption}

\newcommand{\cg}{}

\begin{document}
\title[Network evolution with mesoscopic delay]{Network evolution with Mesoscopic Delays}
\author[Banerjee]{Sayan Banerjee$^1$}
\author[Bhamidi]{Shankar Bhamidi$^1$}
\address{$^1$Department of Statistics and Operations Research, 304 Hanes Hall, University of North Carolina, Chapel Hill, NC 27599}
\email{sayan@email.unc.edu, bhamidi@email.unc.edu, psdey@illinois.edu, sakshay@unc.edu}
\author[Dey]{Partha Dey$^2$}
\address{$^2$Department of Mathematics, University of Illinois Urbana-Champaign, IL 61801}
\author[Sakanaveeti]{Akshay Sakanaveeti$^1$}
\subjclass[2020]{Primary: 60K35, 05C80}
\keywords{temporal networks, delay distribution, random trees, local weak convergence, macroscopic functionals, stochastic approximation}
\begin{abstract}
	Owing to the influence of real-world networks both in science and society, numerous mathematical models have been developed to understand the structure and evolution of these systems, particularly in a temporal context. Recent advancements in fields like distributed cyber-security and social networks have spurred the creation of probabilistic models of evolution, where individuals make decisions based on only partial information about the network's current state. This paper seeks to explore models incorporating \emph{network delay}, where new participants receive information from a time-lagged snapshot of the system. In the context of mesoscopic network delays, we develop probabilistic tools built on stochastic approximation to understand asymptotics of both local functionals, such as local neighborhoods and degree distributions, as well as global properties, such as the evolution of the degree of the network's initial founder. A companion paper~\cite{BBDS04_macro} explores the regime of macroscopic delays in the evolution of the network.  
\end{abstract}
\maketitle



\section{Introduction}
\label{sec:intro}
Driven by the explosion in the amount of data on various real-world networks, the last few years have seen the development of a number of new mathematical network models to understand the emergence of macroscopic properties in such systems~\cites{albert2002statistical, newman2003structure,newman2010networks, bollobas2001random,durrett-rg-book,van2009random}. One core sub-area in this rapidly burgeoning field is dynamic or temporal networks,~\cites{holme2012temporal, masuda2016guidance}. Motivating questions from, for example, the area of social networks include the impact of these dynamics on information diffusion across social networks, such as the spread of malicious information~\cite{shah2012rumor}, the role of heterogeneity in edge creation across different attribute groups~\cites{wagner2017sampling,espin2018towards,antunes2023learning}, and co-evolution of the dynamics of the network with algorithms such as recommendation systems. Models of network evolution have been crucial in addressing these questions, offering insights into the mechanistic reasons behind empirically observed properties, such as heavy-tailed degree distributions~\cites{albert2002statistical,barabasi1999emergence}, the effectiveness of seed reconstruction algorithms~\cites{bubeck-mossel,bubeck2017finding,banerjee2020persistence}, and the biases inherent in network sampling and ranking algorithms~\cites{wagner2017sampling,karimi2022minorities,karimi2018homophily,antunes2023learning}.

{\cg The preferential attachment model, one major class of network evolution schemes to understand the various phenomena described above,  assumes new vertices enter the system and then make probabilistic choices for connecting to existing vertices based on the {\bf entire} current state of the network; in this regime there has been tremendous progress both in the applied domains as well as in probabilistic combinatorics, see e.g.~\cites{durrett-rg-book,van2009random,van2023random,dorogovtsev2002evolution,barabasi1999emergence,krapivsky2001organization,krapivsky2005network,newman2003structure,newman2010networks}. An area that has witnessed significant interest in the applied community but remains largely unexplored within probabilistic combinatorics is the impact of limited information availability on the system's state for new individuals attempting to form connections. This vacuum has motivated (at least) two distinct lines of recent research:
\begin{enumeratea}
\item  Network evolution schemes where individuals try to form connections via limited scope explorations such as random walks,  initialized at uniform locations in the network~\cites{krapvisky2023magic, banerjee2022co}. 

\item  Most relevant for this work, incorporating network delay as a mechanism for information limitation where new individuals have information on a delay modulated snapshot of the network state~\cites{baccelli2019renewal,king2021fluid,dey2022asymptotic} and have motivated a number of random models of directed acyclic graphs \cite{computers12120257,xiao2024accelerating,ahmed2024optimization} including relatives of the preferential attachment class of networks \cite{monch2021dag}.   This paper and the companion paper~\cite{BBDS04_macro} answer question 6~in~\cite{dey2022asymptotic}.

\end{enumeratea}

We start by describing the model in this paper and then outline the main contributions of this paper in Section \ref{sec:outline-contrib}. 
}

\subsection{Network model}
\label{sec:net-mod}
Let us now describe the precise model, initially restricting to the tree network setting and commenting on the more general setting subsequently. The model has three ingredients:
\begin{enumeratea}
\item A \emph{time scale parameter} $\gb \in [0,1]$. This paper deals with the {\bf mesoscopic} regime, where $\gb \in (0,1)$, named for the reasons that will become obvious after the model definition. The regime $\gb= 0$ will be referred to as the {\bf microscopic} delay.   The regime $\gb = 1$, referred to as the {\bf macroscopic} delay regime, is considered in a companion paper~\cite{BBDS04_macro}.   
\item A \emph{delay distribution} $\mu$ on {\cg $(\bR_+, \cB(\bR_+))$}. 
\item An \emph{attachment function} $f:\set{0,1,\ldots} \to \bR_+$, measuring the attractiveness of individuals based on their local information,  assumed to be strictly positive.
\end{enumeratea}

\begin{defn}[Network evolution with delay]
\label{defn:model}We generate a sequence of growing random trees $\set{\cT(n):n\geq 1 }$ as follows:  

\begin{enumeratei}
	\item The initial tree $\cT(1)$ consists of a single vertex, $v_1$. At time $2$, the tree $\cT(2)$ consists of two vertices, labeled as $\set{v_1, v_2}$ and attached by a single edge directed from $v_2$ to $v_1$. Call the vertex $v_1$ the ``root'' of the tree. 
 
	\item Suppose the network has been constructed till time $n$ for $n\geq  2$. Let the vertices in $\cT({n})$ be labeled as $\{v_1,\dots,v_{n}\}$ with edges directed from children to parent. For vertex $v_i\in \cT({n})$ (which entered the system at time $i$)  and $j\geq  i$, let $\deg(v_i,j)$ denote the degree of $v_i$ at time $j$ (which, for all vertices other than the root, is equal to the in-degree $+ 1$) interpreted in various applications as a measure of trust accumulated till time $i$. Initialize always with $\deg(v_i,i) =1$.

	\item  At time  $(n+1)$, a new vertex $v_{n+1}$ enters the system. A \emph{normalized} time delay, $\xi_{n+1} \sim \mu$, independent of $\cT(n)$, is sampled. The information available to $v_{n+1}$ is the graph $\cT(\fplus{n - n^{\gb} \xi_{n+1}}_+)$. 
	\item Conditional on $\cT(\fplus{n - n^{\gb}\xi_{n+1}}_+)$, this new vertex attaches to a vertex $v\in \cT(\fplus{n - n^{\gb}\xi_{n+1}}_+)$ with probability proportional to 
	\[
        f(\deg(v,\fplus{n - n^{\gb}\xi_{n+1}}_+)).
    \]
\end{enumeratei}

Write $\set{\cT(n):n\geq 1}$ for the corresponding sequence of growing random trees and let $\cL(\gb, \mu, f)$ denote the corresponding probability distribution of the sequence of growing random trees.  
\end{defn}

For notational convenience, we will write $\fplus{x}=\max\{\lfloor x\rfloor,1\}$, $\cT(0) = \cT(1)$ and set  $\deg(v_i,j) = 0$ for $j<i$. 
In the evolution dynamics above,  vertex $v_{n+1}$ only has the information of the state of the process  $\cT(\fplus{n - n^{\gb}\xi_{n+1}}_+)$. This includes the vertex set only at that time and their corresponding degree information (not the full degree information at time $n$). Call the vertex that $v_{n+1}$ attaches to, the ``parent'' of $v_{n+1}$ and direct the edge from $v_{n+1}$ to this parent resulting in the tree $\cT(n+1)$. 
Thus, for any $n$, $\cT(n)$ is a random tree on $n$ labelled vertices $\{v_1,\dots,v_n\}$ rooted at $v_1$. In the sequel, to reduce notational overhead, we will drop the ``integer part'' $\lfloor \cdot \rfloor$. We will use $\xi$ to denote a typical r.v.~having distribution $\mu$.
To keep this paper to a manageable length, we focus on the tree-network setting,  deferring further discussion of potential extensions to the non-tree setting,  related work, as well as open problems to Section~\ref{sec:conc}. 

Next, note that $\gb \equiv 1$ corresponds to the setting where the delay $n\xi_{n+1}$ is of the same order as the size of the network and thus is referred to as the \emph{macroscopic} delay regime and is studied in~\cite{BBDS04_macro}. For the rest of the paper, we will assume $\gb < 1$. This regime, and in fact the base case with $\beta \equiv 0$, was the original setting considered in understanding the evolution of systems such as blockchains using probabilistic models for directed acyclic graphs, where one can observe a nontrivial effect on the structure of the graph due to the delay. For example, the blockchain graph structure changes from one-ended to many-ended under the Nakamoto protocol with nondegenerate microscopic ($\beta\equiv0$) delay. Moreover, all the proofs for the microscopic delay regime easily extend to the mesoscopic regime. Hence, in this article, we consider the more general regime $\beta \in [0,1)$. See \cites{nakamoto2008bitcoin,popov2018tangle} for the original formulations in areas such as blockchains and Internet of Things (IOT); see \cites{king2021fluid,muller2022tangle,li2019markov} for recent probabilistic formulations of the mechanisms described in the original papers, and see \cites{king2021fluid,dey2022asymptotic,fk23} specifically dealing with the microscopic regime.  

{\cg Observe that we have taken a specific form of delay which grows as $n^\beta$ in the network size $n$. One could consider a delay of the form $n - \beta$, which would imply every incoming vertex having access to the initial network of size $\lfloor \beta \rfloor$. This would result in a trivial kind of \emph{condensation}: every vertex in this initial network will have a degree proportional to the network size. It is easy to check (and can also be derived from the results of this paper) that a delay of the form $\beta$ (not growing with $n$) will result in most interesting asymptotics exactly mimicking that of the corresponding network model without delay. Delays of the form $\beta n$ fit into the class of macroscopic delays treated in \cite{BBDS04_macro}. Thus, the only other natural delay, of the form $n^\beta$, is studied in the current paper.}

Explicit examples of attachment functions {\bf without delay} that have been considered in the literature include:

\begin{enumeratea}
	\item {\bf Uniform attachment:} $f(\cdot)\equiv 1$. In this case, new vertices attach to existing vertices uniformly at random. The corresponding model is called the random recursive tree and has been heavily analyzed across probabilistic combinatorics and computer science~\cites{smythe1995survey,drmota2009random}. 
	\item {\bf ``Pure'' preferential attachment:} $f(k) = k$. Here, new vertices attach to pre-existing vertices with probability proportional to the degree of the existing vertex~\cite{barabasi1999emergence}. 
	\item {\bf Affine Preferential attachment:} $f(k) = k+\alpha$ for $\alpha \ge 0$~\cite{durrett2007random}. This is a more general analog of the pure preferential attachment model above, in which one can `fine-tune' the attachment probability of the lower degree vertices via the parameter $\alpha$.
\end{enumeratea}

\subsection{Outline of our contributions}
\label{sec:outline-contrib}
{\cg 
There are two major contributions of this paper: 
\begin{enumeratea}
    \item {\bf Development of stochastic approximation tools for local weak convergence:} The first major goal of this paper is to formulate the delay model above and understand large network asymptotics for the model; a formal background for and statements of the main results are developed in the following two sections. In brief, the goal of this paper is to understand not just local functionals such as the degree but, in fact, the entire local neighborhood of typical vertices in preferential attachment models with general functions of the degree driving the evolution, coupled with delay. One major contribution of this paper is the development of stochastic approximation techniques to prove the local weak convergence of such models towards limiting infinite objects via leveraging the historical ordering of the growth of such trees. This method not only gives asymptotics of local functionals but also leads to asymptotics for global functionals, such as the spectral distribution of the adjacency matrix. 
    \item {\bf Delineation of functionals that are robust or vulnerable to mesoscopic delays:} The mathematical techniques developed in this paper reveal that, in this regime, 
    \begin{enumeratei}
    \item Under regularity conditions on the attachment function $f$ and mild moment conditions on the delay distribution $\mu$,  {\bf irrespective} of the parameter $\gb < 1$ and delay distribution $\mu$, we show that the local weak limit of the entire graph {\bf is the same} as in the setting without delay.
    \item  We study the asymptotics of other functionals, such as second-order fluctuations of the degree count, leading to intriguing conjectures for potential phase transitions as the level of mesoscopic delay transitions from $\beta < 1/2$ to $\beta > 1/2$. 
    \item We analyze the evolution of the root degree (as a proxy for more complex `global' functionals, such as the maximal degree),  deriving conditions for the scaling of the root degree to feel the effect of the delay.  
\end{enumeratei}
\end{enumeratea}

}

\subsection{Organization of the paper}
 Section~\ref{sec:main-res} contains statements of the main results and a brief overview of related work. For readers unfamiliar with local weak convergence and fringe convergence, precise definitions are given in Section~\ref{sec:local-wll}. Section~\ref{sec:proofs-meso} contains proofs of the main results. Finally, we conclude with open problems and potential extensions in Section~\ref{sec:conc}.

\section{Results}
\label{sec:main-res}
We first set up the basic probabilistic objects required to state our main results. 

\subsection{Continuous time branching processes and mesoscale limits}
\label{sec:ctbp-def}
Here, we set up the ingredients required to describe local weak limits in the mesoscopic regime. We will need the following assumption on the attachment functions of interest in this paper. We mainly follow~\cites{jagers-ctbp-book,jagers1984growth,nerman1981convergence,rudas2007random}. 

\begin{ass}
	\label{ass:attach-func} 
 Attachment function $f$ is assumed to satisfy:
	\begin{enumeratei}
		\item $f_* := \inf_{i\geq 1} f(i) > 0$. 
		\item $f$ can grow at most linearly i.e., $\exists\ C< \infty$ such that $\limsup_{k\to\infty} f(k)/k \leq C$ (equivalently there exists a constant $C^\prime < \infty$ such that $f(k)\leq C^\prime k$ for all $k\geq 1$). 
		\item Consider the following function $\hat{\rho}:(0,\infty)\to (0,\infty]$ defined via,
\begin{align}
\label{eqn:rho-hat-def}
	\hat{\rho}(\gl):= \sum_{k=1}^\infty \prod_{i=1}^{k} \frac{f(i)}{\gl + f(i)}. 
\end{align}
 Define 
$\underline{\gl}:= \inf\set{\gl > 0: \hat{\rho}(\gl) < \infty}$. 
We assume that
\begin{align}
\label{eqn:prop-under-lamb}
\underline{\gl} < \infty \ \text{ and } \ \lim_{\gl\downarrow\underline{\gl}} \hat{\rho}(\gl) > 1. 
\end{align}
\end{enumeratei}
\end{ass}
Using iii) in the above assumption and the monotonicity of $\hat{\rho}(\cdot)$, there exists a unique $\gl^*:=\gl^*(f) \in (0, \infty)$ such that 
\begin{align}
\label{eqn:malthus-def}
	\hat{\rho}(\gl^*) = 1. 
\end{align}
This object is often referred to as the Malthusian rate of growth parameter~\cites{jagers-nerman-1,jagers-nerman-2,jagers-ctbp-book}. The above assumptions are relatively standard and are required for analyzing an associated branching process, which we now describe. We defer further assumptions on the attachment function when describing the main results. Fix an attachment function $f$. We now use $f$ to construct a point process $\cP_f$ on $\bR_+$. Generate a sequence of independent exponential random variables $\set{E_i:i\geq 1} $ with the rate of $E_i \sim \exp(f(i))$. Next, define 
$\sigma_i:= \sum_{j=1}^{i} E_i$ for $ i\geq 1 $ with $\sigma_0 =0$.
The point process $\cP_f$ is defined via, 

\begin{align}
\label{eqn:xi-f-def}
	\cP_f:=(\sigma_1, \sigma_2, \ldots).
\end{align}
Abusing notation, write for $u\geq 0$,
\begin{align}
\label{eqn:xi-f-t}
	\cP_f[0,u]:= \abs{\set{i: \sigma_i \leq u}}, \text{ and } \mu_f[0,u]:= \E(\cP_f[0,u]). 
\end{align}
Notice that, $\cP_f, \mu_f$ can be naturally extended to measures on $(\bR_+, \cB(\bR_+))$.

\begin{defn}[Continuous time branching process (CTBP) {\cites{jagers-ctbp-book,athreya1972,nerman1981convergence}}]
	\label{def:ctbp}
	Fix an attachment function $f$ satisfying Assumption~\ref{ass:attach-func}(ii). A continuous time branching process driven by $f$, written as $\{\BP_f(u): u \ge 0\}$, is defined to be a branching process started with one individual at time $u=0$ and such that this individual, as well as every individual born into the system, has an offspring distribution that is an independent copy of the point process $\cP_f$ defined in~\eqref{eqn:xi-f-def}.  
\end{defn} 
Next, we present a fundamental distribution on the space of finite rooted trees, which characterizes the asymptotic behavior of local properties in such branching processes. For this, we need some notation. For $n\geq 1$, let $ \bT_{n} $ be the space of all rooted trees on  $n$ {vertices}. Let $ \bbT =
\cup_{n=0}^\infty \bT_{n} $ be the space of all finite rooted trees.  Here $\bT_{0} = \emptyset $ will be used to represent the empty tree (tree on zero vertices). For any $\bt \in \bbT$, let $\rho_{\bt}$ denote the root this tree.

\begin{defn}[Stable age distribution, {\cites{jagers-ctbp-book,jagers-nerman-1,jagers-nerman-2,nerman1981convergence}}]
\label{def:limit-bp-meso}
	Let $\BP_f(\cdot)$ be a continuous-time branching process as above and let $\gl^*$ be the associated Malthusian rate of growth as in~\eqref{eqn:malthus-def}. Let $T_{\gl^*}$ be an $\exp(\gl^*)$ random variable, independent of $\BP_f$ and write $\fpm_{\BP_f}$ for the distribution of $\BP_f(T_{\gl^*})$, viewed as a random finite rooted tree on $\bbT$, where we retain only genealogical information between individuals in $\BP_f(T_{\gl^*})$. This is called the stable age distribution of the continuous-time branching process $\BP_f(\cdot)$. 
\end{defn}
For later use in the proof, $\fpm_{\BP_f}$ has several special properties including arising as the unique probability distribution on the space $\bT$ satisfying a specific recursive construction (see Prop. \ref{prop:prop-of-fp} below and \cite{rudas2007random}), and as shown {\cg in~\cite[Sections 2.4 and 4.5]{aldous-fringe}},  is an example of an \emph{extremal} fringe distribution as in Definition~\ref{fringedef} and the discussion following it.

\subsection{Statement of the main results}

We will now describe our results; the same proofs work in the microscopic ($\gb = 0$, so $O_P(1)$ delays), so we do not explicitly state results in the microscopic regime. We start with the final set of assumptions we need on the attachment functions and delay distribution. 

\begin{ass}[Delay and attachment function]
\label{ass:lipschitz}
In addition to Assumptions~\ref{ass:attach-func} on the attachment function, we assume the following.
\begin{enumeratea}
\item {\bf Assumptions on the delay distribution:}
Assume that 
\begin{equation}
        \label{eqn:815}
        \lim_{n\to \infty} \E\left[\frac{n^{\gb}\xi\cdot \ind{\{ n -n^{\gb}\xi \geq 1\}}}{\fplus{n-n^{\gb}\xi}}\right] =0.
    \end{equation}
    \item {\bf Assumptions on the attachment function: } Assume that one of the three conditions holds:\medskip

\begin{enumerateA}
\item {\bf Linear:} $f(k) = k+\alpha$ for all $k\geq 1$.
    \item {\bf Sublinear and Lipschitz continuity:} $f$ is sublinear i.e., $\lim_{k\to \infty} f(k)/k = 0$ and  $f$ is Lipschitz with some constant $L< \infty$.
    \item {\bf Sublinear and non-decreasing:} $f$ is sublinear and non-decreasing.  
\end{enumerateA}
\end{enumeratea}
\end{ass}

The following Lemma gives an easy sufficient condition for the delay distribution to satisfy the assumptions above, as well as another condition required in the analysis of the root degree later in Theorem~\ref{thm:meso-max-degree}.

\begin{lem}\label{lem:815} 
Fix $\gb\in [0,1)$.
   \begin{enumeratea}
       \item Assume that the delay distribution satisfies $\lim_{n\to\infty}n\log n \cdot \pr(\lceil\xi^{\frac{1}{1-\gb}} \rceil =n)=0$. Then~\eqref{eqn:815} holds. 
        \item Assume that $\E[\log_+\xi]<\infty$. Then, we have
        \begin{equation}
    \label{eqn:1257}
    \sum_{n=1}^\infty \frac{1}{n}\E\left[\frac{n^{\gb}\xi \cdot \ \ind{\{ n -n^{\gb}\xi \geq 1\}}}{\fplus{n-n^{\gb}\xi}}\right] < \infty. 
    \end{equation}
   \end{enumeratea}
\end{lem}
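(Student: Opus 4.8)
The plan is to bound, in both parts, the integrand
\[
R_n(x):=\frac{n^{\gb}x\,\ind\{n-n^{\gb}x\geq 1\}}{\fplus{n-n^{\gb}x}},\qquad x>0,
\]
by a deterministic quantity, using that the relevant scale is $n\asymp x^{q}$ with $q:=\tfrac1{1-\gb}$ (the point where $n-n^{\gb}x$ becomes of unit order). Three elementary facts are used repeatedly: (i) on $\{n-n^{\gb}x\geq1\}$ one has $n^{\gb}x<n$ and $\fplus{n-n^{\gb}x}=\lfloor n-n^{\gb}x\rfloor\geq\tfrac12\max(1,\,n-n^{\gb}x)$; (ii) by concavity of $t\mapsto t^{1-\gb}$, if $x^{q}\leq m\leq n$ then $n-n^{\gb}x\geq n-n^{\gb}m^{1-\gb}\geq(1-\gb)(n-m)$; (iii) if $\lceil x^{q}\rceil=m\geq n$ then $x>(n-1)^{1-\gb}\geq(n-1)n^{-\gb}$, so $n-n^{\gb}x<1$ and $R_n(x)=0$.

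For part (a), set $b_m:=\pr(\lceil\xi^{q}\rceil=m)$ and condition on $m=\lceil\xi^{q}\rceil$. By (iii) only $m\leq n-1$ contributes, and on those events (i)--(ii) give $R_n(\xi)\leq 2n^{\gb}m^{1-\gb}/\max(1,(1-\gb)(n-m))$, hence
\[
\E[R_n(\xi)]\ \leq\ 2n^{\gb}\sum_{m=1}^{n-1}\frac{m^{1-\gb}b_m}{\max(1,(1-\gb)(n-m))}.
\]
I split the sum at $m=\lfloor n/2\rfloor$. For $m\leq n/2$ the denominator is $\geq(1-\gb)n/2$, so that piece is $\leq\tfrac{4}{1-\gb}\,n^{\gb-1}\sum_{m\leq n/2}m^{1-\gb}b_m$; writing $c_m:=m\log m\cdot b_m\to0$ one has $m^{1-\gb}b_m=c_mm^{-\gb}/\log m$, and since $\sum_{m\leq N}m^{-\gb}=\Theta(N^{1-\gb})$, a routine Ces\`aro-type splitting (split at the index beyond which $c_m$ is small) gives $\sum_{m\leq n/2}m^{1-\gb}b_m=o(n^{1-\gb})$, so this piece is $o(1)$. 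For $n/2<m\leq n-1$ bound $m^{1-\gb}\leq n^{1-\gb}$ and $b_m\leq 2c_m/(n\log(n/2))$; the surviving harmonic sum $\sum_{1\leq k\leq n/2}\max(1,(1-\gb)k)^{-1}=\Theta(\log n)$ is exactly cancelled by the $\log n$ from $b_m$, leaving a bound $\lesssim\sup_{j\geq n/2}c_j\to0$. The factor $\log n$ in the hypothesis is precisely what makes this cancellation work.

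For part (b), Tonelli gives $\sum_{n\geq1}\tfrac1n\E[R_n(\xi)]=\E\big[\sum_{n\geq1}\tfrac1n R_n(\xi)\big]$, so it suffices to show $S(x):=\sum_{n\geq1}\tfrac1n R_n(x)\leq C_1+C_2\log_+x$ with absolute constants, and then $\E[S(\xi)]\leq C_1+C_2\E[\log_+\xi]<\infty$. For $x<\tfrac12$ one has $n^{\gb}x\leq n/2$ for every $n$, so $S(x)=O(1)$ by the tail estimate below; thus fix $x\geq\tfrac12$ and split at $n=(2x)^{q}$. For $n\geq(2x)^{q}$ one has $n-n^{\gb}x\geq n/2$, so by (i) the $n$-th term is $\leq 4x/n^{2-\gb}$ and $\sum_{n\geq(2x)^{q}}4x\,n^{-(2-\gb)}=O\big(x\cdot x^{-q(1-\gb)}\big)=O(1)$, using $q(1-\gb)=1$. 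On the critical window $\{n:1\leq n-n^{\gb}x,\ n<(2x)^{q}\}$ fact (i) bounds the $n$-th term by $1/\lfloor n-n^{\gb}x\rfloor$; since for $n>x^{q}$ the map $n\mapsto n-n^{\gb}x$ is increasing with increments in $[1-\gb,1]$, $\lfloor n-n^{\gb}x\rfloor$ runs through $\{1,\dots,\Theta(x^{q})\}$, hitting each value at most $\lceil 1/(1-\gb)\rceil$ times, so this piece is $\leq\lceil 1/(1-\gb)\rceil\sum_{j\leq\Theta(x^{q})}\tfrac1j=O(\log_+x)$.

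The crux in both parts is the critical window $n\asymp\xi^{q}$, where $\fplus{n-n^{\gb}\xi}$ may equal $1$: in (a) one must retain the factor $n^{\gb}m^{1-\gb}$ (replacing it by $n$ would leave a spurious $\Theta(1)$ contribution from the $m=1$ term) and absorb the harmonic sum into the hypothesis' $\log n$; in (b) that same harmonic sum is exactly what produces $\log_+\xi$, and the off-critical tail must be checked to sum to $O(1)$ uniformly in $\xi$. Everything else is elementary.
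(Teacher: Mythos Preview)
Your proof is correct and follows essentially the same route as the paper: both reduce to the scale $X=\xi^{1/(1-\gb)}$ via the concavity bound $n-n^{\gb}\xi\gtrsim(1-\gb)(n-X)$, split into a tail piece and a critical window near $n\approx X$, and identify the harmonic sum in the window as the source of the $\log$ factor. The only cosmetic differences are that in part~(a) the paper dispatches the $m\le n/2$ piece by DCT (since $(X/n)^{1-\gb}\ind\{X\le n/2\}\le 1$) rather than your Ces\`aro splitting, and in part~(b) it replaces your monotonicity/increment argument for the window by the clean telescoping identity $\sum_{n>\ell}\tfrac{1}{n}\cdot\tfrac{\ell}{n-\ell}=\sum_{k=1}^{\ell}\tfrac{1}{k}$; neither change is substantive.
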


\begin{rem}
    Note that the condition in Lemma~\ref{lem:815}-a) is almost optimal, as we do need $\lim_{n\to\infty}n\pr(\lceil \xi^\frac{1}{1-\gb}\rceil=n)=0$ for equation~\eqref{eqn:815} to hold.
\end{rem}

Now, let us state the main result of this section. 

\begin{thm}[Local weak limit, mesoscopic regime]\label{thm:meso-local}
	Consider the sequence of random trees $\set{\cT(n):n\geq 1} \sim \cL(\gb, \mu, f)$, where $\gb <1$, and both the attachment function $f$ and delay distribution $\mu$ satisfy Assumptions~\ref{ass:attach-func} and~\ref{ass:lipschitz}. Then $\set{\cT(n):n\geq 1}$ converges in probability in the extended fringe sense (Def.~\ref{def:local-weak}~\eqref{it:fringe-b}) to the unique infinite {\tt sin}-tree with fringe distribution $\fpm_{\BP_f}$. 
\end{thm}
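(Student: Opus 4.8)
The plan is to reduce the theorem, via the machinery of Section~\ref{sec:local-wll}, to a single statement about fringe counts, prove that statement by stochastic approximation, and observe that the delay perturbs the no-delay recursion only by an amount whose expectation vanishes precisely because of~\eqref{eqn:815}. For $\vt\in\bbT$ write $N_\vt(n):=\#\{v\in\cT(n):f_0(v,\cT(n))\simeq\vt\}$, so that $\fP_n^0=\frac{1}{n}\sum_\vt N_\vt(n)\,\delta_{\vt}$ and $\sum_\vt N_\vt(n)=n$. Since $\fpm_{\BP_f}$ is an extremal fringe distribution (as recalled after Definition~\ref{def:limit-bp-meso}) and is itself a fringe distribution in the sense of Definition~\ref{fringedef}, it suffices by Theorem~\ref{thm:aldous-efr-pfr} and Lemma~\ref{ftoeflemma} to show $N_\vt(n)/n\to\fpm_{\BP_f}(\vt)$ in probability (equivalently, since these quantities are bounded by $1$, in expectation) for each fixed $\vt$; the limiting {\tt sin}-tree is then forced to be the one with fringe law $\fpm_{\BP_f}$, assembled through~\eqref{ftoef}.

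Next I would set up the one-step recursion. Conditioning on $\cT(n)$ and the sampled delay and writing $m:=\fplus{n-n^{\gb}\xi_{n+1}}$, the vertex $v_{n+1}$ attaches to $w\in\cT(m)$ with probability $f(\deg(w,m))/S_m$, where $S_k:=\sum_{v\in\cT(k)}f(\deg(v,k))\ge f_*k$. Attaching below $w$ inserts a leaf into the fringe of $w$ and of each of its ancestors; summing the induced changes of $N_\vt$ over $w$ and over the (at most $|\vt|$) affected ancestors --- those with fringe of size $\le|\vt|$ --- and using that a bounded-size fringe $f_0(u,n)\simeq\vs$ pins down all degrees inside that subtree, one checks that in the no-delay case $m=n$ this collapses to a \emph{closed} recursion
\[
\E[N_\vt(n+1)-N_\vt(n)\mid\cF_n]=\ind\{\vt\simeq\{\bullet\}\}+\frac{1}{S_n}\Bigl(\sum_{\vs:\,|\vs|=|\vt|-1}c(\vs,\vt)\,N_\vs(n)-\Phi(\vt)\,N_\vt(n)\Bigr)+\frac{O_\vt(1)}{S_n},
\]
with nonnegative coefficients $c(\vs,\vt),\Phi(\vt)$ depending only on $f$ and the trees, $S_n$ a linear functional of the counts (deterministically $(2+\alpha)n-2$ in the linear case), and $O_\vt(1)$ the single-root correction. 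Standard continuous-time branching process theory (the stable-age description of Definition~\ref{def:limit-bp-meso}; see e.g.~\cite{rudas2007random},~\cite{aldous-fringe}) identifies $\fpm_{\BP_f}$ as the unique normalized fixed point of this system. Combined with the contractive loss term $-\Phi(\vt)N_\vt/S_n$ and the renormalization, a Gr\"onwall/Toeplitz estimate run by induction on $|\vt|$ then shows that any solution perturbed by driving errors $\eps_n\to0$ still obeys $N_\vt(n)/n\to\fpm_{\BP_f}(\vt)$; running this directly on the random sequence $N_\vt(n)/n$ (increment $=$ conditional mean $+$ bounded martingale increment, step sizes $1/(n+1)$) yields convergence in probability and sidesteps covariance issues from the random normalizer $S_n$ in the sublinear cases.

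The substantive step is to bound the discrepancy $\eps_n$ between the true delayed conditional increment of $N_\vt$ and the no-delay skeleton above. The effect of attaching below $w$ on $N_\vt$ vanishes unless $|f_0(w,n)|\le|\vt|$ and is otherwise at most $|\vt|$ in absolute value, so $|\eps_n|\le|\vt|\sum_{w:\,|f_0(w,n)|\le|\vt|}\bigl|\frac{f(\deg(w,m))}{S_m}\ind\{w\in\cT(m)\}-\frac{f(\deg(w,n))}{S_n}\bigr|$. The vertices here have degree $\le|\vt|$, hence $f$-values bounded by $M_\vt:=\max_{j\le|\vt|}f(j)$; at most $n-m=n^{\gb}\xi_{n+1}$ of them are excluded or have altered degree between times $m$ and $n$; and $S_m\ge f_*m$. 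Feeding in the near-linear-growth / Lipschitz / monotonicity hypotheses (Assumption~\ref{ass:lipschitz}(b), which enter exactly to control $|S_n-S_m|$, with the sublinear non-decreasing case also needing tail control of the degree counts from Assumption~\ref{ass:attach-func}) one obtains
\[
|\eps_n|\;\le\;C_\vt\cdot\frac{n^{\gb}\xi_{n+1}\,\ind\{n-n^{\gb}\xi_{n+1}\ge1\}}{\fplus{n-n^{\gb}\xi_{n+1}}}\;+\;C_\vt\,\ind\{n-n^{\gb}\xi_{n+1}<1\},
\]
the last term covering the rare event that the available snapshot is just the root. The first term has expectation tending to $0$ by~\eqref{eqn:815}, the second by $\pr(\xi>n^{1-\gb})\to0$; under the stronger $\E[\log_+\xi]<\infty$ one gets the summable bound of Lemma~\ref{lem:815}(b), enough for the almost-sure version. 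This is the one place the delay-distribution assumption is genuinely used, and it supplies the error hypothesis $\eps_n\to0$ required above.

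The main obstacle is exactly this last step. The difficulty is that for atypically large $\xi_{n+1}$ the snapshot $\cT(m)$ is small, $S_m$ is small, and the single-step error is genuinely large --- it is only after averaging over $\xi_{n+1}$ that it becomes negligible, and condition~\eqref{eqn:815} is precisely the bookkeeping that makes this so; doing this robustly (rather than merely for light delay tails) is the heart of the argument. A secondary, more technical difficulty is the sublinear non-decreasing case, where $f$ need not be Lipschitz and one must control, in expectation, both the accumulated total-weight change $|S_n-S_m|$ and the jumps of $f$ along the attachment path, which is where the tail decay of the limiting degree distribution implied by Assumption~\ref{ass:attach-func} enters.
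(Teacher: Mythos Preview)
Your proposal is essentially the same approach as the paper's: reduce to expected fringe convergence via extremality of $\fpm_{\BP_f}$ and Theorem~\ref{thm:aldous-efr-pfr}/Lemma~\ref{ftoeflemma}, set up a one-step recursion for the fringe counts $N_\vt(n)$, show the delay only introduces an error controlled by~\eqref{eqn:815}, and conclude by induction on $|\vt|$. The error decomposition you sketch matches the paper's $w_1,\dots,w_4$ (for degrees) and $R_1,\dots,R_4$ (for fringes) almost term-for-term, and your identification of~\eqref{eqn:815} as the crux is exactly right.

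Two organizational differences are worth flagging. First, the paper does not treat the random normalizer $S_n$ (their $\Psi(n)$) as something to be ``sidestepped'': it first proves degree-distribution convergence (Corollary~\ref{cor:meso-deg}) as a standalone result, deduces $\Psi(n)/n\to\gl^*$ (Lemma~\ref{eqn:normalizer}), and only then runs the fringe recursion with $\Phi_n=n/\Psi(n)$ replaced by $1/\gl^*$ plus an error. In the sublinear case this step leans on the ODE analysis of~\cite{sethuraman2019sublinear}, which you gesture at but do not make explicit; your claim that running stochastic approximation directly on $N_\vt(n)/n$ ``sidesteps'' the normalizer is optimistic --- you still need $S_n/n\to\gl^*$ to identify the fixed point, and that is precisely what the degree-convergence step supplies. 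Second, the paper characterizes the limit $\fpm_{\BP_f}$ explicitly via historical orderings (Proposition~\ref{prop:prop-of-fp}, equation~\eqref{eqn:pm-def}) and shows it is the unique solution of the recursion~\eqref{eqn:recur-first-part}; you invoke ``standard continuous-time branching process theory'' for this, which is fine but less self-contained.
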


We describe two implications of the above result.

\begin{cor}
\label{cor:meso-deg}
Under the assumptions of Theorem~\ref{thm:meso-local}, for the random tree models $\set{\cT(n):n\geq 2} \sim \cL(\gb, \mu, f)$ the degree counts $\set{N_k(n):k\geq 1}$ as in~\eqref{eqn:deg-count}  satisfy
\[
\frac{N_k(n)}{n} \convas p_k(f) \text{ as } n\to\infty,
\]
where 
\[
p_k(f) := \int_0^{\infty} \gl^* e^{-\gl^* t} \mathbb{P}\left(\cP_{f}(t)=k-1\right)dt = \frac{\gl^*}{\gl^* + f(k)} \prod_{j=1}^{k}\frac{f(j)}{\gl^* + f(j)},\quad k\ge 1.
\]
\end{cor}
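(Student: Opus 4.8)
The plan is to deduce the corollary from Theorem~\ref{thm:meso-local} in two stages: first obtain convergence in probability of $N_k(n)/n$ from the fringe convergence, and then upgrade to almost-sure convergence via a martingale / concentration argument using the historical ordering of the tree growth. For the first stage, I would observe that $N_k(n) = \#\{v \in \cT(n): \deg(v,\cT(n)) = k\}$ and that the event $\{\deg(v,\cT(n)) = k\}$ is a function of the fringe $F_0(v,\cT(n))$ only (namely, the root of the fringe has degree $k$, equivalently in-degree $k-1$ except at the true root, whose contribution is $O(1)$ and hence negligible after dividing by $n$). Thus $N_k(n)/n = \fP_n^0(A_k) + O(1/n)$ where $A_k \subseteq \bbT$ is the (clopen, since it depends only on $B(\cdot,1)$) set of finite rooted trees whose root has degree $k$. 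Since $\fP_n^0 \probc \fpm_{\BP_f}$ by Theorem~\ref{thm:meso-local} (extended fringe convergence implies fringe convergence, Definition~\ref{def:local-weak}(b)), and $A_k$ is a continuity set for the limit (its boundary is empty in the topology generated by the $d_{\bbT}$ metric since membership is determined at radius $1$), the mapping theorem gives $N_k(n)/n \convp \fpm_{\BP_f}(A_k)$. Identifying $\fpm_{\BP_f}(A_k) = \pr(\deg(0,\cT_{\fpm_{\BP_f}}) = k)$ with the claimed $p_k(f)$ is the computation $\pr(\deg(\text{root},\BP_f(T_{\gl^*})) = k-1) = \int_0^\infty \gl^* e^{-\gl^* t}\pr(\cP_f(t) = k-1)\,dt$; evaluating the integral uses that, conditionally, the root's birth times of its first $j$ children are $\sigma_1 < \cdots < \sigma_j$ with $\sigma_i - \sigma_{i-1} \sim \exp(f(i))$ independent, so $\pr(\cP_f(t)=k-1) = \pr(\sigma_{k-1}\le t < \sigma_k)$, and integrating against $\gl^* e^{-\gl^* t}\,dt$ telescopes (each exponential factor $\tfrac{f(i)}{\gl^*+f(i)}$ arising from $\E[e^{-\gl^* \sigma_i}]$-type identities) to $\tfrac{\gl^*}{\gl^*+f(k)}\prod_{j=1}^k \tfrac{f(j)}{\gl^*+f(j)}$.

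For the second stage — upgrading convergence in probability to almost sure convergence — I would exploit that $\{\cT(n)\}$ is built on one probability space with $\cT(n) \subseteq \cT(n+1)$, and apply a Doob-martingale concentration bound along the sequence. Fix $k$ and consider the filtration $\sF_n = \sigma(\cT(1),\ldots,\cT(n))$ together with the delay variables $\xi_2,\ldots,\xi_{n+1}$; changing the $(m+1)$-st attachment decision (the parent chosen by $v_{m+1}$) changes $N_k(n)$ by at most a bounded amount \emph{per affected vertex}, but because later vertices may attach to descendants of $v_{m+1}$ this naive bounded-difference estimate is not uniformly $O(1)$. Instead I would run the stochastic-approximation / concentration machinery that the paper advertises in its introduction: write $N_k(n)$ via a recursion in $n$, compare the conditional increment $\E[N_k(n+1)-N_k(n)\mid \sF_n]$ to $p_k(f)$ using that, by the delay assumption~\eqref{eqn:815}, the snapshot $\cT(\fplus{n - n^\gb \xi_{n+1}})$ has degree statistics close to those of $\cT(n)$ in the relevant averaged sense, and control the martingale part by an $L^2$ or Azuma-type bound after a suitable truncation of $\xi$. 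Summability of the error — guaranteed by Lemma~\ref{lem:815}(b) under $\E[\log_+\xi] < \infty$, which is implied by the running assumptions — then yields $N_k(n)/n \to p_k(f)$ almost surely via a Borel–Cantelli argument along a subsequence combined with monotonicity-type interpolation between consecutive times.

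The main obstacle is the second stage: the almost-sure statement genuinely requires quantitative control of the fluctuations of $N_k(n)$, not merely the weak convergence of $\fP_n^0$, and in the delay model the increment $N_k(n+1)-N_k(n)$ depends on the \emph{time-lagged} snapshot whose size $\fplus{n-n^\gb\xi_{n+1}}$ is random and, for heavy-tailed $\mu$, occasionally much smaller than $n$. Handling this cleanly needs the delay hypothesis~\eqref{eqn:815} to show the expected relative lag vanishes, and Lemma~\ref{lem:815}(b) (equation~\eqref{eqn:1257}) to get a summable error series so that the stochastic-approximation error terms are almost surely negligible. In practice I expect this corollary to be proved by citing the relevant concentration lemmas established in Section~\ref{sec:proofs-meso} for the proof of Theorem~\ref{thm:meso-local} itself, so that the only genuinely new content here is the elementary identification of the integral formula for $p_k(f)$ and the observation that the degree functional is continuous on the fringe space.
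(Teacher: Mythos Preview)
Your proposal reverses the logical dependency in the paper: you plan to deduce the corollary from Theorem~\ref{thm:meso-local}, but in the paper Corollary~\ref{cor:meso-deg} is proved \emph{first}, directly and independently, and is then used as an essential input to the proof of Theorem~\ref{thm:meso-local}. Specifically, the proof of Theorem~\ref{thm:meso-local} invokes Corollary~\ref{cor:meso-deg} (via Lemma~\ref{eqn:normalizer}) to show $\Psi(n)/n\to\gl^*$, and uses the leaf-density convergence as the base case~\eqref{eqn:141} of the induction over fringe trees in Proposition~\ref{prop:exp-fring}. So within this paper your first stage would be circular. Your closing remark that the corollary might in practice be proved ``by citing the relevant concentration lemmas established in Section~\ref{sec:proofs-meso}'' is closer to the truth than your headline plan.

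The paper's actual argument is a direct stochastic-approximation/recursion analysis of the degree counts themselves. Lemma~\ref{lem: deg-approx} shows that $\E(Z_k(n+1)-Z_k(n)\mid\cF_n)$ equals the no-delay drift $f(k-1)Z_{k-1}(n)/\Psi(n)-f(k)Z_k(n)/\Psi(n)$ plus an error $\pi_k(n)\to 0$, the error being controlled precisely by the delay assumption~\eqref{eqn:815} (and the Lipschitz/monotone assumption on $f$). The two attachment regimes are then handled separately: in the sublinear case the rescaled processes are tight and subsequential limits are identified with the ODE system of~\cite{sethuraman2019sublinear}, whose unique solution is $\varphi_k(t)=p_k(f)\,t$; in the linear case $\Psi(n)=(2+\alpha)n$ is explicit, the mean recursion is solved via~\cite{durrett-rg-book}*{Lemma~4.1.2}, and a.s.\ convergence follows from a bounded-difference inequality. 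Note also that neither route uses the summable condition~\eqref{eqn:1257}; that is \emph{not} part of the hypotheses of Theorem~\ref{thm:meso-local} (it enters only in Theorem~\ref{thm:meso-max-degree}), so your appeal to Lemma~\ref{lem:815}(b) as ``implied by the running assumptions'' is unwarranted here.
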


A natural question at this stage is what other asymptotics can be read off from the above theorem. The following result shows that even (in principle) global functional asymptotics follow from this theorem. 
\begin{cor}\label{cor:meso-spect}
Let $\vA(n)$ denote the adjacency matrix of $\cT(n)${\cg ,} let $\set{\gl_i(n):1\leq i\leq n}$ denote the corresponding eigen-values of $\vA_n$ and let $\hat{F}_n$ denote the corresponding empirical distribution {\cg of eigen-values.} Then under the assumptions of the Theorem~\ref{thm:meso-local}, for the random trees models $\set{\cT(n):n\geq 2} \sim \cL(\gb, \mu, f)$ there exists a deterministic distribution $F_{f,\infty}^{\spec}$ with positive mass at zero such that $\hat{F}_n \convas F_{f,\infty}^{\spec}$. 
\end{cor}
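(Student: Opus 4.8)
The plan is to read the spectral convergence off Theorem~\ref{thm:meso-local} via the principle that, for graph sequences that are sparse and have uniformly integrable degrees, local weak convergence forces weak convergence of the empirical spectral distribution, the limit being the expected spectral measure at the root of the local weak limit. Fix $z\in\bC$ with $\Im z>0$ and let $g_n(z):=\tfrac1n\tr\big((\vA(n)-z)^{-1}\big)=\tfrac1n\sum_{v\in\cT(n)}\big((\vA(n)-z)^{-1}\big)_{vv}$ be the Stieltjes transform of $\hat F_n$. On a tree, each diagonal resolvent entry $\big((\vA(\bt)-z)^{-1}\big)_{vv}$ is bounded by $1/\Im z$ and is produced by the Schur--complement recursion running down the descendant subtrees of $v$, which makes it a bounded functional of the rooted neighbourhood $(\bt,v)$ that is continuous in the local topology. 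Hence, once uniform integrability of degrees is in force, the in--probability local weak convergence of $\set{\cT(n):n\ge 1}$ to the {\tt sin}--tree $\TT_\infty$ with fringe law $\fpm_{\BP_f}$ (Theorem~\ref{thm:meso-local}) gives $g_n(z)\convp g(z):=\E\big[\big((\vA_{\TT_\infty}-z)^{-1}\big)_{00}\big]$ for every such $z$, a \emph{deterministic} number that, by Theorem~\ref{thm:meso-local}, depends on neither $\gb$ nor $\mu$.

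\textbf{From $g_n$ to $\hat F_n$.} Uniform integrability holds here: since $\cT(n)$ is a tree, $\tfrac1n\sum_v\deg(v,\cT(n))=\tfrac{2(n-1)}{n}\le 2$, and a uniform--in--$n$ polynomial tail bound $\pr(\deg(v,\cT(n))\ge k)=O(k^{-1-\delta})$ for some $\delta>0$ ($v$ uniform in $\cT(n)$; this is provable alongside Corollary~\ref{cor:meso-deg} using the Malthusian parameter $\gl^*>1$) upgrades this to uniform integrability. The same identity gives $\tfrac1n\tr(\vA(n)^2)=\tfrac{2(n-1)}{n}\to 2$, so $\set{\hat F_n}$ is tight and no mass escapes; thus $g$ is the Stieltjes transform of a genuine probability measure, which we name $F_{f,\infty}^{\spec}$, and the Stieltjes continuity theorem yields $\hat F_n\convp F_{f,\infty}^{\spec}$. (The bare moment method would suffice in the sublinear cases, where $F_{f,\infty}^{\spec}$ has all moments finite, but fails in the pure linear case $f(k)=k$: there $\tfrac1n\tr(\vA(n)^4)=\tfrac2n\sum_v\deg(v,\cT(n))^2-\tfrac{2(n-1)}{n}\sim c\log n$, so $F_{f,\infty}^{\spec}$ has infinite fourth moment.) The upgrade from $\convp$ to $\convas$ is a concentration estimate: along the filtration generated by the i.i.d.\ inputs of the construction, altering one step perturbs $g_n(z)$ by $O(1/n)$ up to terms controlled by the martingale bounds underpinning Theorem~\ref{thm:meso-local} and the a.s.\ statement of Corollary~\ref{cor:meso-deg}, so Azuma's inequality gives summable tail probabilities; applied to a countable dense set of $z$ in the upper half--plane this gives $\hat F_n\convas F_{f,\infty}^{\spec}$.

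\textbf{Positive mass at zero.} Let $Z_n$ be the number of vertices of $\cT(n)$ with at least two leaf--children. If $u$ has distinct leaf--children $c,c'$ then $\ve_c-\ve_{c'}\in\ker\vA(n)$, and over distinct $u$ these vectors have pairwise disjoint supports, hence are linearly independent; so $\hat F_n(\set{0})=\tfrac1n\dim\ker\vA(n)\ge Z_n/n$. The indicator ``$u$ has $\ge 2$ leaf--children'' is a bounded continuous functional of the fringe $f_0(u,\cT(n))$, so Theorem~\ref{thm:meso-local} gives $Z_n/n\convp c:=\pr_{\fpm_{\BP_f}}(\text{the root has at least two leaf--children})$, and $c>0$: conditioning on $T_{\gl^*}=t$, with positive probability the root of $\BP_f(t)$ has at least two children born before time $t$, at least two of which produce no offspring during their remaining lifetimes. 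Since $\set{0}$ is closed, weak convergence yields $F_{f,\infty}^{\spec}(\set{0})\ge\limsup_n\hat F_n(\set{0})\ge c>0$.

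\textbf{Main obstacle.} The delicate point is supplying the uniform degree tail bound (equivalently, uniform integrability of degrees) and confirming that it lets local weak convergence pass through to the unbounded spectral functional --- this is exactly where the heavy tail of the limiting degree law, governed by $\gl^*$, must be controlled, and where the tree structure (through the resolvent recursion and the sparsity identity $\tfrac1n\tr(\vA(n)^2)\to 2$) is used essentially; a non--tree analogue would require a separate argument.
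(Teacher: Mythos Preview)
The paper does not supply a proof of Corollary~\ref{cor:meso-spect}; it is stated as a direct consequence of Theorem~\ref{thm:meso-local} and the implicit reference is to the by-now standard machinery of Bordenave--Lelarge type (local weak convergence of sparse graphs with uniformly integrable degrees implies weak convergence of the empirical spectral distribution to the expected spectral measure at the root of the limit). Your outline is exactly this route and is essentially correct, including the clean lower bound on $\hat F_n(\{0\})$ via pairs of sibling leaves.

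Two small points are worth tightening. First, your description of the diagonal resolvent entry as produced by a recursion ``running down the descendant subtrees of $v$'' is slightly misleading: in the finite tree $\cT(n)$ the entry $((\vA(n)-z)^{-1})_{vv}$ depends on the \emph{entire} graph neighbourhood of $v$, not just the fringe below $v$; the relevant fact is that it is a bounded functional which is continuous in the Benjamini--Schramm topology (truncating the ball of radius $R$ around $v$ incurs an error $o_R(1)$ uniformly in $n$), and this is what lets local weak convergence pass through. Second, the Azuma step for the $\convp\to\convas$ upgrade deserves a word of care: changing the randomness at step $m$ alters the parent of $v_m$, which changes the degree sequence seen by all later arrivals and hence potentially many edges, so the ``one step changes one edge'' heuristic is not literally true. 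The correct martingale is $m\mapsto\E[g_n(z)\mid\cF_m]$, whose increments are bounded by $O(1/(n\Im z))$ because revealing step $m+1$ fixes at most one edge of $\cT(n)$ \emph{in conditional law} (the same bounded-difference device the paper invokes for the a.s.\ part of Corollary~\ref{cor:meso-deg}); with this said, your conclusion stands. Your aside on the failure of the moment method when $f(k)=k$ (fourth moment diverging like $\log n$) is correct and a nice touch.
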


\begin{figure}[htbp]
    \centering
    \begin{subfigure}[b]{0.49\textwidth}
    \includegraphics[width=1.0\linewidth]{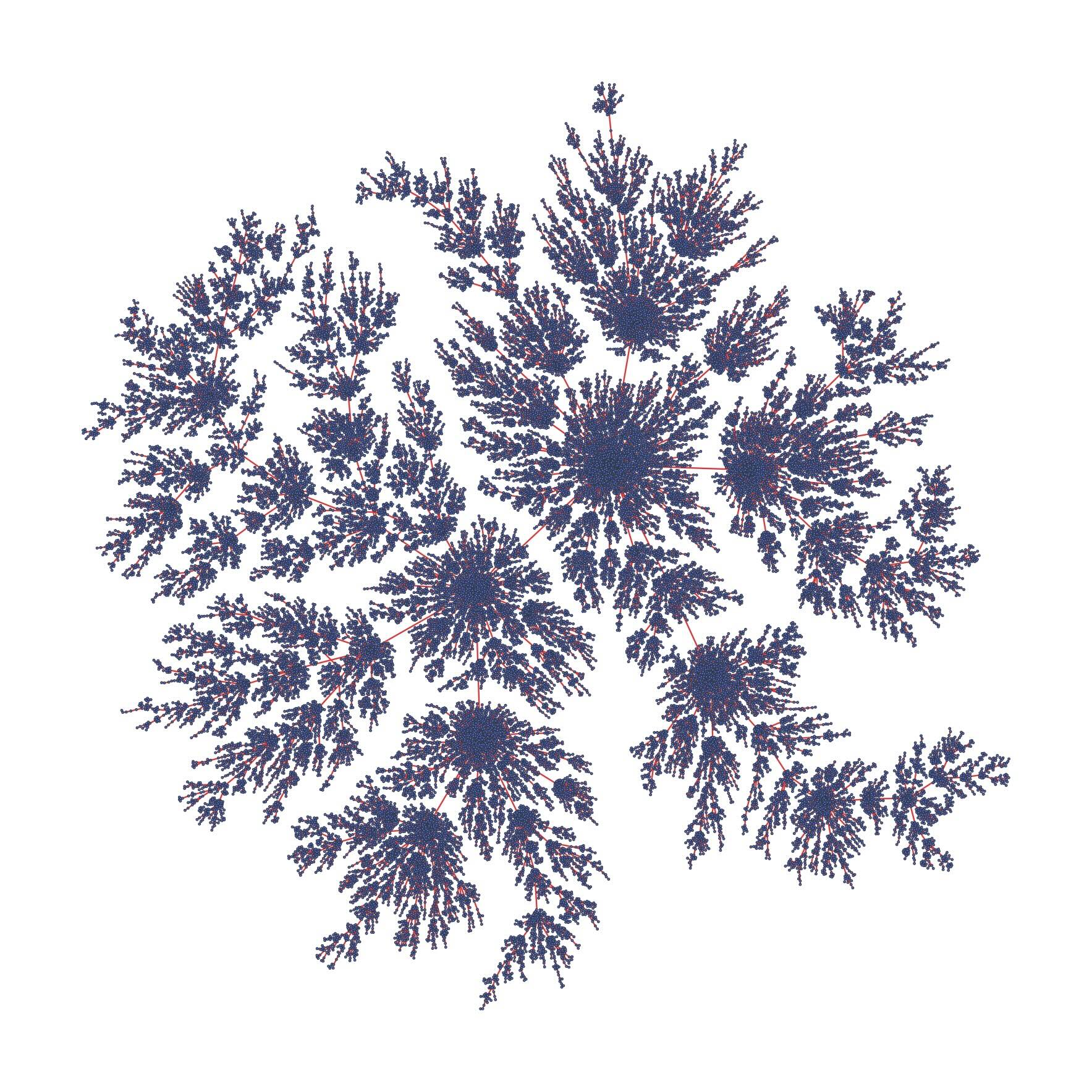}
    \caption{$\xi\equiv0$ (no delay)}
    \end{subfigure}
    \begin{subfigure}[b]{0.49\textwidth}
    \includegraphics[width=1.0\linewidth]{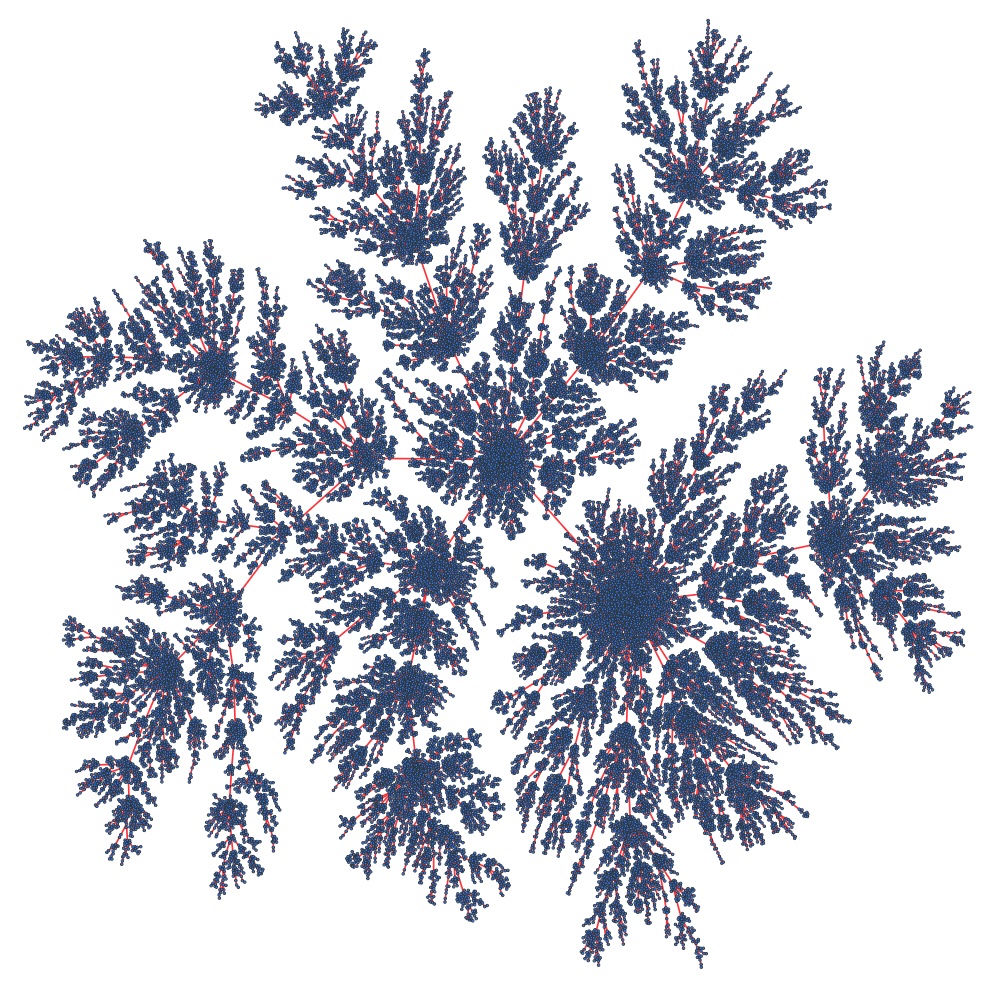}
    \caption{$\xi\sim $Uniform$(0,1)$}
    \end{subfigure}
    \begin{subfigure}[b]{0.49\textwidth}
    \includegraphics[width=1.0\linewidth]{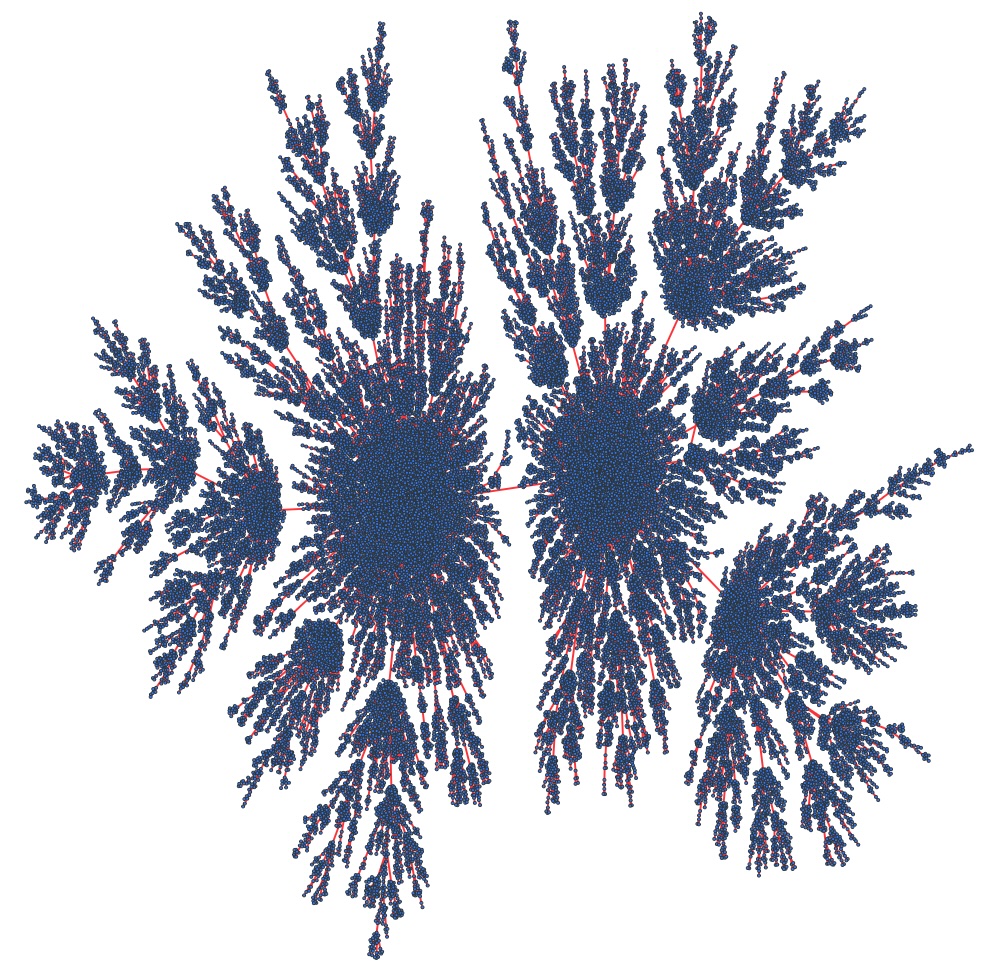}
    \caption{$\xi\sim $1/Uniform$(0,1)$}
    \end{subfigure}
    \begin{subfigure}[b]{0.49\textwidth}
    \includegraphics[width=1.0\linewidth]{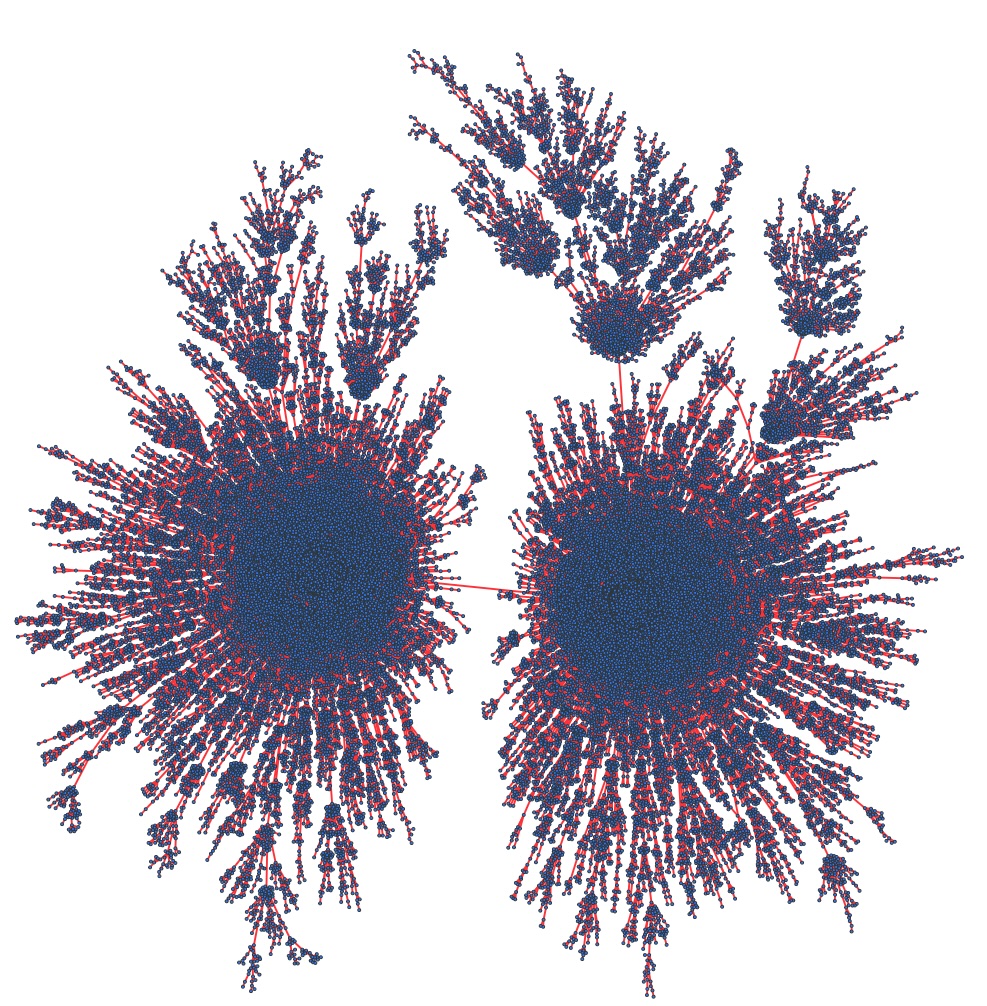}
    \caption{$\xi\sim 1/[\text{Uniform}(0,1)]^{2}$}
    \end{subfigure}
    \caption{Simulated Linear Preferential Attachment trees in the mesoscopic regime with $\beta =1/2$, attachment function taken to be the pure linear preferential attachment function $f(k) = k$ with $\theta=1/2$ and different delay distributions arranged with increasing gradation of mass towards the right tail. Each simulation is on a network size of $n  = 50000$ nodes with different delay distributions. All four settings satisfy the assumptions of Theorem \ref{thm:meso-local} (\ie~\eqref{eqn:815}) and thus have the same local limit as the no delay regime in a large network limit. Delay regime (D) does not satisfy the conditions of Theorem \ref{thm:meso-max-degree}--(i), and thus, for example, the root degree has a different scaling than the setting without delay.}
    \label{fig:sim}
\end{figure}

Since, to first order, in the mesoscopic regime, one does not observe the effect of the delay, a natural question is whether this is observable in second-order fluctuations around limits. We specialize to the linear setting with $f(k) = k+\alpha,~k\geq 1$. Write $\set{p_k:k\geq 1}$ for the corresponding limit degree distribution as in Corollary~\ref{cor:meso-deg}. Resnick and Samorodnitsky~\cite{resnick2015asymptotic} considered the degree counts $\set{N_k(n):k\geq 1}$ in the {\bf no delay regime} and showed that,
\[\set{\sqrt{n}\left(\frac{N_k(n)}{n} - p_k\right):k\geq 1} \convd \set{Z_k:k\geq 1}, \qquad \text{ as } n\to\infty,\]
where $\set{Z_k:k\geq 1}$ is a centered Gaussian process with an appropriate covariance kernel. The proof is initialized by analyzing the number of leaves whose distributional limit,
\[
\sqrt{n}\left(\frac{N_1(n)}{n} - p_1\right) \convd \N\left(0,\frac{(1+\alpha)(2+\alpha)^2}{(3+2\alpha)^2(4+3\alpha)}\right).
\]
This result is then leveraged to prove convergence of degree two counts, which is then used to prove degree three counts, and so on. The following result shows that the effect of the delay is not evident even in the second-order behavior of, for example, leaf counts. In principle, this should lead to convergence of higher degree counts; however, we only deal with the ``base'' case. 

\begin{thm}[Second order fluctuations of degree counts]\label{thm:meso-degree-CLT}
	 Assume that the attachment function is linear with affine parameter $\alpha$, \ie~$f(k) = k+\alpha$, and the delay distribution satisfies Assumption~\ref{ass:lipschitz} and \eqref{eqn:1257}. Then, there exists a sequence of random variables $\set{\fX_n:n\geq 1}$ with $\fX_n/n \convas p_1$ such that $N_1(n)$ satisfies a central limit theorem with 
  \begin{align*}
	     \sqrt{n}\left(\frac{N_1(n)}{n}  -\frac{\fX_n}{n}\right) \convd \N\left(0,\frac{(1+\alpha)(2+\alpha)^2}{(3+2\alpha)^2(4+3\alpha)}\right).
	 \end{align*}
  Furthermore, if the delay distribution satisfies $\lim\limits_{n\to \infty}\sqrt{n} \E\left(\frac{n^\beta\xi\cdot\ind\set{n-n^\beta\xi \geq 1}}{\lfloor n-n^\beta \xi \rfloor}\right) \to 0$, then the centering $\fX_n/n$ can be replaced by $p_1$. 
\end{thm}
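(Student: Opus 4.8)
The plan is to set up a stochastic-approximation / Doob-decomposition recursion for $N_1(n)$, the number of leaves, and to track how the delay perturbs the drift. Conditionally on $\cT(n)$ and the delay $\xi_{n+1}$, the new vertex $v_{n+1}$ attaches to a vertex in $\cT(m_{n+1})$, where $m_{n+1} = \fplus{n - n^\beta \xi_{n+1}}$, with probability proportional to $f(\deg(v,m_{n+1})) = \deg(v,m_{n+1}) + \alpha$. Since the total weight in $\cT(m)$ is $\sum_{v} (\deg(v,m)+\alpha) = (2 + \alpha) m - 2$ (using $\sum_v \deg(v,m) = 2(m-1)$ for a tree on $m$ vertices with $m-1$ edges, where the root contributes in-degree and non-roots in-degree $+1$ — the exact constant is routine), the probability that $v_{n+1}$ lands on a leaf of $\cT(m_{n+1})$ is $\frac{(1+\alpha) N_1(m_{n+1})}{(2+\alpha)m_{n+1} - 2}$, and $N_1$ changes by $+1$ if a non-leaf is hit or if $m_{n+1} < n$ (the new vertex is itself a leaf) and by $-1 + \ind\{\text{hit leaf}\}$ otherwise. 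The key point is that the drift involves $N_1(m_{n+1})$ rather than $N_1(n)$; writing $N_1(m_{n+1}) = N_1(n) - (N_1(n) - N_1(m_{n+1}))$ and noting $|N_1(n) - N_1(m)| \le n - m = n - \fplus{n - n^\beta \xi_{n+1}} \le n^\beta \xi_{n+1} + 1$, the delay contributes an extra drift term of order $\E\!\big[\tfrac{n^\beta \xi \ind\{n - n^\beta\xi \ge 1\}}{\fplus{n - n^\beta\xi}}\big]$ per step, which by Assumption~\ref{ass:lipschitz}\eqref{eqn:815} (equivalently Lemma~\ref{lem:815}) is $o(1)$, and by the stronger $\sqrt{n}$-hypothesis is $o(n^{-1/2})$.

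Next I would define $\fX_n$ as the solution of the \emph{deterministic} recursion obtained by replacing $N_1$ with its conditional mean and keeping the delay-perturbed drift: roughly, $\fX_{n+1} = \fX_n + b_n - a_n \fX_n/n + \eps_n$, where $a_n \to \tfrac{3 + 2\alpha}{2+\alpha}$, $b_n \to \tfrac{1+\alpha}{2+\alpha} \cdot \tfrac{3+2\alpha}{2+\alpha}\big/\text{(normalization)}$ wait — more carefully, $\fX_n$ is chosen so that $\E[N_1(n+1) \mid \cF_n] - \fX_{n+1}$ and $N_1(n) - \fX_n$ obey a contractive linear relation, i.e. $\fX_n$ absorbs exactly the deterministic (including delay) part of the drift. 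Standard stochastic-approximation estimates (e.g. the Robbins–Monro / urn-type arguments as in Resnick–Samorodnitsky, or Athreya–Karlin embedding) give $\fX_n / n \to p_1$ almost surely, since the delay correction is summable-in-$n^{-1}$-weighted by Lemma~\ref{lem:815}(b) — this is exactly why part (b) of that Lemma is invoked, to upgrade $o(1)$ drift perturbation to a.s. convergence of the normalized process. Then I would write the martingale $M_n := N_1(n) - \fX_n$ (more precisely $N_1(n) - \E[N_1(n)\mid \cF_{n-1}] $ summed, matched against $\fX_n$), compute its predictable quadratic variation $\langle M \rangle_n$: each increment has conditional variance converging to $p_1\big(\tfrac{3+2\alpha}{2+\alpha}\big)$-ish — the precise constant must reproduce $\tfrac{(1+\alpha)(2+\alpha)^2}{(3+2\alpha)^2(4+3\alpha)}$ after the $1/n$ time-change — so $\langle M\rangle_n / n$ converges a.s.; a martingale CLT (Lindeberg condition is trivial since increments are bounded by $1$) then yields $\sqrt{n}(N_1(n)/n - \fX_n/n) \convd \N(0,\sigma^2)$ with the stated $\sigma^2$. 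For the last sentence, under the $\sqrt{n}$-strengthened delay hypothesis the difference $\fX_n/n - p_1$ is $o(n^{-1/2})$ (solving the linear recursion: a drift perturbation of size $o(n^{-1/2})$ at time $n$ produces a deviation of the same order since the homogeneous part contracts like $n^{-(3+2\alpha)/(2+\alpha)}$, which is summable), so it can be absorbed into the centering without affecting the limit.

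The main obstacle I anticipate is \textbf{controlling the coupling error $N_1(n) - N_1(m_{n+1})$ with the right precision in both the a.s. and the $L^2$/CLT senses simultaneously}: the crude bound $|N_1(n) - N_1(m)| \le n-m$ is enough to show the \emph{drift} perturbation is $o(1)$, but to conclude that $\langle M\rangle_n/n$ has the \emph{same} limit as in the no-delay case one must check that replacing $N_1(m_{n+1})$ by $N_1(n)$ inside the conditional variance also costs only $o(1)$ per step on average — this again follows from \eqref{eqn:815}, but requires care because the variance is quadratic in the relevant quantities. A secondary subtlety is the precise bookkeeping of the normalizing constant (total weight $(2+\alpha)m - 2$, the $\pm 2$ from the root, and whether $m_{n+1} = n$ or $< n$) so that the algebra collapses to exactly $\tfrac{(1+\alpha)(2+\alpha)^2}{(3+2\alpha)^2(4+3\alpha)}$; I would verify this by checking it reduces, when $\xi \equiv 0$, to the Resnick–Samorodnitsky computation, which it must. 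Everything else — boundedness of increments, Lindeberg, solving the scalar linear recursion for $\fX_n$ — is routine.
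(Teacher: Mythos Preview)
Your overall strategy matches the paper's: write the one–step conditional mean of $Z_n:=N_1(n)$ as $\E[Z_{n+1}\mid\cF_n]=k_nZ_n+A_n$ with $k_n=1-\gamma\,\E\bigl[\ind\{n-n^\beta\xi\ge1\}/\lfloor n-n^\beta\xi\rfloor\bigr]$ and a random $A_n$ absorbing the delay correction, build a martingale from this recursion, compute its predictable quadratic variation, and apply the Lindeberg--Feller martingale CLT (Lindeberg is indeed trivial since increments are bounded). The random centering $\fX_n$ is exactly the solution of $\fX_{n+1}=k_n\fX_n+A_n$, and $\fX_n/n\to p_1$ a.s.\ follows from $|A_n-1|\le\gamma\,\E\bigl[n^\beta\xi\ind\{n-n^\beta\xi\ge1\}/\lfloor n-n^\beta\xi\rfloor\bigr]+\pr(\xi>n^{1-\beta})\to0$ together with the elementary recursion lemma (Durrett, Lemma~4.1.2). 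You do \emph{not} need the summability statement in Lemma~\ref{lem:815}(b) here; Assumption~\ref{ass:lipschitz} (i.e.\ \eqref{eqn:815}) alone suffices, so that remark in your proposal is a red herring.

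The one genuine gap is the martingale you write down. The process $N_1(n)-\fX_n$ is \emph{not} a martingale: from $\E[Z_{n+1}\mid\cF_n]=k_nZ_n+A_n$ and $\fX_{n+1}=k_n\fX_n+A_n$ you get $\E[Z_{n+1}-\fX_{n+1}\mid\cF_n]=k_n(Z_n-\fX_n)$, which contracts but is not constant. The correct object (and this is the Resnick--Samorodnitsky device the paper follows) is the \emph{multiplicatively normalized} process
\[
M_n=\Bigl(\prod_{j=1}^{n-1}k_j^{-1}\Bigr)(Z_n-\fX_n),
\]
which \emph{is} a martingale. This changes the bookkeeping: the increments $d_{n+1}$ carry a factor $\prod_{j=1}^n k_j^{-1}\sim C^{-1}n^{\gamma}$ (where $\gamma=(1+\alpha)/(2+\alpha)$), so the conditional second moments scale like $n^{2\gamma}$ rather than $O(1)$, and one obtains $\sum_{i\le n}\E[d_i^2\mid\cF_{i-1}]\sim\sigma^2 n^{2\gamma+1}$ a.s.\ before dividing by $n^{\gamma+1/2}$ and unwinding the product to recover $\sqrt{n}(Z_n/n-\fX_n/n)$. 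Your ``each increment has conditional variance converging to a constant'' is therefore off by exactly this growing product; once you insert it, the algebra collapses to the stated variance as in the no-delay case, and your anticipated obstacle (replacing $N_1(m_{n+1})$ by $N_1(n)$ inside the variance) is handled by the same crude bound you already used for the drift. For the final assertion, your argument is right: under the $\sqrt{n}$-hypothesis one shows $\sqrt{n}(\fX_n/n-p_1)\to0$ by applying Durrett's recursion lemma to $\eps_n:=\sqrt{n}(\fX_n-np_1)$ directly.
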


\begin{rem}
     When the tails of delay $\xi$ are not unusually heavy, we expect $\E\left(\frac{n^\beta\xi\cdot\ind\set{n-n^\beta\xi \geq 1}}{\lfloor n-n^\beta \xi \rfloor}\right) = o(n^{\beta-1})$. Therefore, $\sqrt{n} \E\left(\frac{n^\beta\xi\cdot\ind\set{n-n^\beta\xi \geq 1}}{\lfloor n-n^\beta \xi \rfloor}\right) = o(n^{\beta - \frac{1}{2}})$, and the condition in Theorem \ref{thm:meso-degree-CLT}, required to have deterministic limit $p_1$ as the centering, is expected to hold for most delay distributions when $\beta < 1/2$. The regime $\beta \geq 1/2$ seems ripe for exploration. As we will see in the next result, functionals like the root degree admit regimes of $\beta$ with potentially new behavior than the setting without delay.   
\end{rem}

The other functionals that could feel the effect of the delay are functions such as the root degree $M(\rho, n)$ or the maximal degree $M_{\sss(1)}(n)$. In the setting with no delay, by~\cite{mori2005maximum}, $n^{-1/(\alpha +2)}\cdot M_{\sss(1)}(n) \convas W_\infty$ for a strictly positive, finite random variable, and the same is true for the root degree. The following result derives conditions for the scaling of the root degree to be affected (or not) by the delay. 

\begin{thm}\label{thm:meso-max-degree}
	Assume that the attachment function $f$ is linear with affine parameter $\alpha$, $\gb <1$, and the delay distribution $\mu$ satisfies the Assumptions in~\ref{ass:lipschitz} \textcolor{black}{and \eqref{eqn:1257}}. Define $X:=\xi^{1/(1-\gb)}$ and $\theta:=1/(2+\ga)$. We have the following behaviors:
 \begin{enumeratei}
     \item If $\E[ X^{1-\theta}] <\infty$, then the rescaled root degree,  
    \[
        M(\rho, n)/n^{\theta} \convas W_\infty \text{ as } n\to\infty,
    \]
for a non-negative finite random variable $W_\infty$, that depends on $\ga,\gb, \xi$. The convergence also holds in $L^2$ and  $\prob(W_\infty >0)>0$.

\item If \textcolor{black}{$\E[X] = \infty$}, then we have for any $\eps>0$,
    \[
    \lim_{n\to\infty}\pr(M(\rho,n) \ge (1-\eps)\E[ X\wedge n] )=1.
    \]
 \end{enumeratei}
\end{thm}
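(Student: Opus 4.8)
The plan is to track the (shifted) founder degree $D_n:=\deg(\rho,n)+\alpha$ by a stochastic‑approximation / Doob‑decomposition argument. Write $Z_m:=(2+\alpha)m-2=\sum_{v\in\cT(m)}f(\deg(v,m))$ for $m\ge 2$, $m_{n+1}:=\fplus{n-n^\beta\xi_{n+1}}$, and $X_k:=\xi_k^{1/(1-\beta)}$. Conditionally on $\cF_n$ and $\xi_{n+1}$ one has $D_{n+1}=D_n+B_{n+1}$ with $B_{n+1}\sim\mathrm{Bernoulli}(p_{n+1})$, where $p_{n+1}=D_{m_{n+1}}/Z_{m_{n+1}}$ if $m_{n+1}\ge 2$ and $p_{n+1}=1$ if $m_{n+1}=1$ (a single‑vertex snapshot $\cT(1)$ forces attachment to $\rho$). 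Set $c_n:=\prod_{j=2}^{n-1}Z_j/(Z_j+1)$; since $\log c_n=-\theta\log n+O(1)$ with summable corrections, $c_nn^\theta\to K\in(0,\infty)$. The central object is $W_n:=c_nD_n$, with Doob decomposition $W_{n+1}=W_n+c_{n+1}R_n+\Delta_{n+1}$, where $\Delta_{n+1}:=W_{n+1}-\E[W_{n+1}\mid\cF_n]$ is a martingale difference and the \emph{delay drift} is
\[
R_n=\E_{\xi}\!\left[\tfrac{D_{m_{n+1}}}{Z_{m_{n+1}}}-\tfrac{D_n}{Z_n}\right]
\]
(the first ratio read as $1$ when $m_{n+1}=1$); $R_n\equiv 0$ in the delay‑free model, so it is the only genuinely new term. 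Note that $\E[X^{1-\theta}]<\infty$ forces $\E[\log_+\xi]<\infty$, so by Lemma~\ref{lem:815}(b) condition~\eqref{eqn:1257} holds automatically in part (i).

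For \textbf{part (i)} I would argue in four steps. \emph{(a) A priori moments:} $c_0n^\theta\le\E[D_n]\le Cn^\theta$ and $\E[D_n^2]\le Cn^{2\theta}$. The lower bound is a short induction using $(1+1/n)^\theta\le 1+\theta/n$ and $\pr(m_{n+1}\ge 2)\to 1$. For the upper bound a naive Gronwall estimate does not close (not even without delay), so I compare with the \emph{exact} delay‑free recursion: with $a_n:=c_n\E[D_n]$ one gets $a_{n+1}=a_n+c_{n+1}\bar\delta_n$, $\bar\delta_n=\E_\xi[\tilde h(m_{n+1})-\tilde h(n)]$, $\tilde h(m):=\E[D_m]/Z_m$; estimating $|\tilde h(m)-\tilde h(k)|\lesssim\min\{1,\ \E[D_k](k-m)^+/(mk)+(\E[D_k]-\E[D_m])/Z_k\}$ and splitting $\xi_{n+1}$ at the threshold $\{m_{n+1}\ge n/2\}$ versus $\{X_{n+1}\gtrsim n\}$ gives $c_{n+1}|\bar\delta_n|\lesssim(\sup_{j\le n}a_j)u_n+w_n$ with $\sum_nu_n<\infty$ controlled by~\eqref{eqn:1257} and $\sum_nw_n\asymp\sum_n n^{-\theta}\pr(X>cn)\asymp\E[X^{1-\theta}]<\infty$; a ``small tail'' bootstrap (choose $n_0$ with $\sum_{n\ge n_0}u_n<\tfrac12$, carrying along $\E[B_{n+1}]\lesssim n^{\theta-1}$) then yields $\sup_na_n<\infty$. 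The second moment follows the same scheme from $\E[D_{n+1}^2]=\E[D_n^2]+\E[(2D_n+1)p_{n+1}]$. \emph{(b) Drift summable:} the pathwise inequality $|D_m/Z_m-D_n/Z_n|\le (2+\alpha)D_n(n-m)^+/(Z_mZ_n)+(D_n-D_m)/Z_n$ for $m\ge 2$, together with the same split and the moment bounds, gives $\sum_nc_{n+1}\E|R_n|<\infty$ and $\sum_nc_{n+1}(\E R_n^2)^{1/2}<\infty$. \emph{(c) Martingale part:} $\E[\Delta_{n+1}^2]\le c_{n+1}^2\E[B_{n+1}]\lesssim n^{-2\theta}n^{\theta-1}=n^{-1-\theta}$, so $\sum_n\E[\Delta_{n+1}^2]<\infty$. \emph{(d) Assembly:} then $W_n=W_2+\sum_{k\ge 2}c_{k+1}R_k+\sum_{k\ge 2}\Delta_{k+1}$ converges a.s.\ and in $L^2$; since $c_nn^\theta\to K$ and $\alpha/n^\theta\to 0$, $M(\rho,n)/n^\theta\convas W_\infty:=\lim_nW_n/K$, also in $L^2$, and $\E[W_\infty]=\lim_n c_n\E[D_n]/K\ge c_0/K>0$ gives $\pr(W_\infty>0)>0$.

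For \textbf{part (ii)} I would use only that $m_k=1$ forces the arriving vertex to attach to $\rho$, so $M(\rho,n)\ge S_n:=\sum_{k=3}^n\ind\{m_k=1\}$. The events $\{m_k=1\}=\{\xi_k>(k-2)/k^\beta\}$ are independent over $k$, with $\pr(m_k=1)=\pr(X>\psi(k))$ for $\psi(k)=((k-2)/k^\beta)^{1/(1-\beta)}=k+O(1)$, whence $\E[S_n]=\sum_{k=3}^n\pr(X>\psi(k))=\sum_{k=3}^n\pr(X>k)+O(1)=\E[X\wedge n]+O(1)$, which tends to $\infty$ precisely because $\E[X]=\infty$. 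As $\var(S_n)\le\E[S_n]$, Chebyshev gives $\pr\big(S_n<(1-\eps)\E[X\wedge n]\big)\le\pr\big(S_n<(1-\tfrac{\eps}{2})\E[S_n]\big)+o(1)\to 0$, which proves the claim.

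The \textbf{main obstacle} is steps (a)--(b) of part (i): the a priori moment bounds and the summability of the delay drift. Two features make this delicate. First, one must replace the failed Gronwall argument by comparison with the exact delay‑free recursion, so that the dominant part of each increment is cancelled identically rather than merely bounded; the resulting bootstrap is mildly circular (the bound on $|R_n|$ contains $\E[D_n]$) and must be run simultaneously for $\E[D_n]\asymp n^\theta$ and $\E[D_{n}]-\E[D_m]\lesssim n^{\theta-1}(n-m)$. Second, for the contribution of atypically large delays one must \emph{not} use the crude Markov bound $\pr(X>cn)\lesssim n^{\theta-1}$ --- inserting it makes $\sum_nc_{n+1}(\cdots)\asymp\sum_n n^{-1}$ diverge --- but instead the sharp Abel‑summation identity $\sum_n n^{-\theta}\pr(X>n)\asymp\E[X^{1-\theta}]$, while the typical‑delay contribution is exactly what~\eqref{eqn:1257} controls. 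Part (ii), by contrast, is elementary.
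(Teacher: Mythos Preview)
Your treatment of part~(ii) is essentially identical to the paper's: both lower bound $M(\rho,n)$ by the count $S_n$ of arrivals whose delayed snapshot is $\cT(1)$, compute $\E[S_n]=\E[X\wedge n]+O(1)$, and apply Chebyshev.

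For part~(i) the paper takes a genuinely different and considerably shorter route. Rather than analysing $D_n=\deg(\rho,n)+\alpha$ directly, the paper \emph{splits} the root into two virtual vertices $0'$ and $1'$: $0'$ receives all edges created by ``large'' delays (snapshot $\cT(1)$) together with their preferential-attachment progeny, while $1'$ carries the ordinary dynamics. Writing $M_n=\deg(0',n)$, $T_n=\deg(1',n)+\alpha$ and $\Theta_n=\prod_{j<n}(1+\theta(1+\eps_j)/j)\sim n^\theta$, the monotonicity $M_m\le M_n$, $T_m\le T_n$ for $m\le n$ immediately makes both $T_n/\Theta_n$ and $M_n/\Theta_n-\sum_{j<n}\gamma_j/\Theta_{j+1}$ (with $\gamma_j=\pr(X>j(1-1/j)^{1/(1-\beta)})$) \emph{supermartingales}. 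Almost-sure convergence then follows from the supermartingale convergence theorem once $\sum_j\gamma_j/\Theta_{j+1}\asymp\E[X^{1-\theta}]<\infty$; the $L^2$ bound is a one-line induction on $\E M_n^2$; and positivity of the limit comes from a multiplicative lower bound on $\E T_n$. The key gain is that the sign of the drift is fixed \emph{by monotonicity}, so there is no need to bound $|R_n|$ and no bootstrap circularity.

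Your direct Doob decomposition of $W_n=c_nD_n$ is correct in outline and more in the spirit of the stochastic-approximation arguments used elsewhere in the paper, but it is more laborious: because $R_n=\E_\xi[D_{m_{n+1}}/Z_{m_{n+1}}-D_n/Z_n]$ has no a~priori sign, you must control $\E|R_n|$, which forces the simultaneous bootstrap of $\E[D_n]\asymp n^\theta$ and the increment bound $\E[D_n]-\E[D_m]\lesssim n^{\theta-1}(n-m)$. You correctly flag this as the main obstacle; note however that the increment bound itself hides a term $\sum_{k=m}^n\pr(X\ge ck)$ from snapshots landing near $\cT(1)$, which has to be fed back into the same split, so the bootstrap is a bit more entangled than your sketch suggests. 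The paper's splitting trick sidesteps all of this by isolating the large-delay edges in a separate process whose compensator is exactly the summable series $\sum_j\gamma_j/\Theta_{j+1}$.
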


\begin{rem}
    Trying to understand the precise dynamics of the root degree, as well as the maximum degree, is worthy of study just by itself. The above result gives the `first order' behavior of the root degree. We believe that the `second-order' behavior (fluctuations) will lead to several interesting regimes for the delay distribution and associated phase transitions. We are currently investigating such phenomena.
\end{rem}

\begin{rem}
    Note that, in part (ii), if $n^{1-\theta}\pr(X>n)\to\infty$, then $n^{-\theta}\E[ X\wedge n]\to\infty$. \textcolor{black}{In this sense, the root degree scaling undergoes a \emph{phase transition} as the tail of the delay distribution $\prob(X\ge x)$ changes from lighter than $x^{-(1-\theta)}$ to heavier than $x^{-(1-\theta)}$ for large $x$}. One can heuristically explain the transition for the growth order of the root degree as follows. Due to ``large'' delay, there will be $\textsc{extra}_n:=\sum_{i=1}^n \ind\set{\xi_i\ge i^{1-\gb}}$ many extra connections to the root, which is concentrated around its mean $\E(\textsc{extra}_n)= \sum_{i=1}^n \pr(\xi\ge i^{1-\gb}) \approx \E (X\wedge n)$. Suppose $X$ has upper tail behavior given by $\pr(X\ge x)= c x^{-\gc}$ as $x\to\infty$ for some constant $c>0,\gc\in(0,1)$. \textcolor{black}{Then, the number of extra connections is concentrated around $\frac{c}{1-\gc}\cdot n^{1-\gc}$, which is larger than the root degree order with no delay when $\gc<1-\theta$. Due to the preferential attachment property of ``rich getting richer'', for $\gc<1-\theta$, this leads to the root degree growing like $n^{1-\gamma}$.}
    We believe that the correct growth order of the root and maximum degree in case ii) is given by $n^{\theta} \E[(X\wedge n)^{1-\theta}]$. We defer further detailed analysis of such macroscopic functionals to future work. 
\end{rem}

\begin{rem}
    Similar to the case with no delay \cite{mori2005maximum}, we expect a Gaussian CLT for the root degree (and maximum degree) in case i) with centering $n^{\theta}W_\infty$ and scaling $n^{\theta/2}\sqrt{W_\infty}$. In case ii), however, we do not expect any martingale convergence result, but a \textcolor{black}{possibly non-Gaussian} CLT with non-random centering should hold. To keep the exposition simple, we do not pursue this direction here.
\end{rem}

\subsection{Related work}
We placed this work in the general area of dynamic network models in Section~\ref{sec:intro}. The goal of this section is to discuss research that directly influences this paper. Aldous' paper~\cite{aldous-fringe} heralded the now fundamental notion of local weak convergence to understand asymptotics for large discrete random structures; for more recent work on fringe distributions, see the survey~\cite{holmgren2017fringe}. Our proof techniques use stochastic approximation techniques to understand the historical evolution of subtrees, which was first used in the context of preferential attachment models (without delay) in~\cite{rudas2007random}. In the probability community, the study of dynamic network models in the family studied in this paper,  incorporating delay in their evolution, is still primarily in its infancy; the closest papers are~\cites{baccelli2019renewal,king2021fluid,dey2022asymptotic,fk23}. 

\section{Local weak convergence and {\tt sin}-trees}
\label{sec:local-wll}

The goal of this section is to describe standard notions of such convergence, called \emph{local weak convergence}~\cites{aldous-steele-obj,benjamini-schramm,van2023random}. The setting of trees is more straightforward to grasp, and we largely follow Aldous~\cite{aldous-fringe}.

\subsection{Mathematical notation}
We use $\stod$ for stochastic domination between two real-valued probability measures. For $J\geq 1$ let $[J]:= \set{1,2,\ldots, J}$. If $Y$ has an exponential distribution with rate $\gl$, write this as $Y\sim \exp(\gl)$. Write $\bZ$ for the set of integers, $\bR$ for the real line, $\bN$ for the natural numbers and let $\bR_+:=(0,\infty)$. Write $\convas,\convp,\convd$ for convergence almost everywhere, in probability, and in distribution, respectively. For a non-negative function $n\mapsto g(n)$,
we write $f(n)=O(g(n))$ when $|f(n)|/g(n)$ is uniformly bounded, and
$f(n)=o(g(n))$ when $\lim_{n\rightarrow \infty} f(n)/g(n)=0$.
Furthermore, write $f(n)=\Theta(g(n))$ if $f(n)=O(g(n))$ and $g(n)=O(f(n))$.
We write that a sequence of events $(A_n)_{n\geq 1}$
occurs \emph{with high probability} (whp) when $\pr(A_n)\rightarrow 1$ as $n \rightarrow \infty$. One of the core objects of this paper is the study of a sequence of growing random trees $\set{\cT_n:n\geq 1}$. Throughout we will write $\deg(v,\cT_n)$ for the degree of the vertex $v$ in tree $\cT_n$ and write for the empirical degree counts,  
\begin{equation}
    \label{eqn:deg-count}
    N_k(n) = \sum_{v\in \cT_n} \ind\set{\deg(v,\cT_n) = k }, \qquad k\geq 1.
\end{equation}
When there is no scope for confusion, we will simplify {\cg $\deg(v,\cT_n)$} to $\deg(v,n)$. 
\subsection{Fringe decomposition for trees}
 Recall the space of finite rooted trees  $\bbT$  described in Section \ref{sec:ctbp-def}.  For any $r\geq 0$ and $\bt\in \bbT$, let $B(\bt, r) \in \bbT$ denote the subgraph of $\bt$ of vertices within graph distance $r$ from $\rho_{\bt}$, viewed as an element of $\bbT$ and rooted again at $\rho_{\bt}$. 

 Given two rooted finite trees $\bs, \bt \in \bbT$,  say that $\bs \simeq \bt $ if  there exists a {\bf root
preserving} isomorphism between the two trees viewed as unlabelled graphs. Given two rooted trees $\bt,\bs \in \bbT$ (\cite{benjamini-schramm},~\cite{van2023random}*{Equation 2.3.15}), define the distance 
\begin{align}
\label{eqn:distance-trees}
	d_{\bbT}(\bt,\bs):= \frac{1}{1+R^*}, \qquad \text{with } \qquad R^* =\sup\{r: B(\bt, r) \simeq B(\bs, r)
	\}.
\end{align}

Next, fix a tree $\bt\in \bbT$ with root $\rho = \rho_\bt$ and a vertex $v\in \bt$ at (graph) distance $h$ from the root.  Let $(v_0 =v, v_1, v_2, \ldots, v_h = \rho)$ be the unique path from $v$ to $\rho$. The tree $\bt$  can be decomposed as $h+1$ rooted trees $f_0(v,\bt), \ldots, f_h(v,\bt)$, where $f_0(v,\bt)$ is the tree rooted at $v$, consisting of all vertices for which there exists a path from the root passing through $v$. For $i \ge 1$, $f_i(v,\bt)$ is the subtree rooted at $v_i$, consisting of all vertices for which the path from the root passes through $v_i$ but not through $v_{i-1}$. 

{\cg Let $\bbT^\infty := \bbT^{\mathbb{N}}$. For $\bt \in \bbT$ and $v\in \bt$, we define $F(v,\bt) \in \bbT^\infty$ as} 
\[F(v, \bt) = \left(f_0(v,\bt), f_1(v,\bt) , \ldots, f_h(v,\bt), \emptyset, \emptyset, \ldots \right),\]
{\cg and call it the} fringe decomposition of $\bt$ about the vertex $v$. Call $f_0(v,\bt)$ the {\bf fringe} of the tree $\bt$ at $v$.
For $k\geq 0$, call $F_k(v,\bt) = (f_0(v,\bt) , \ldots, f_k(v,\bt))$ the {\bf extended fringe} of the tree $\bt$ at $v$ truncated at distance $k$ from $v$ on the path to the root.

Now consider the space $\bbT^\infty$. The metric in~\eqref{eqn:distance-trees} extends  to $\bbT^\infty$, \eg\ using the distance,
\begin{align}
\label{eqn:dist-inf}
	d_{\bbT^\infty}((\bt_0, \bt_1, \ldots),(\bs_0, \bs_1, \ldots)):= \sum_{i=0}^\infty \frac{1}{2^i} d_{\bbT}(\bt_i, \bs_i). 
\end{align}
We can also define analogous extensions to $\bT^k$ for finite $k$.  

Next, an element $\bfomega = (\bt_0, \bt_1, \ldots) \in \bbT^\infty$, with $|\bt_i|\geq 1$ for all $ i\geq 0$,  can be thought of as a locally finite infinite rooted tree with a {\bf s}ingle path to {\bf in}finity (thus called a {\cg{\tt sin}-tree~\cite[Section 4.1]{aldous-fringe}}), as follows: Identify the sequence of roots of $\set{\bt_i:i\geq 0}$ with the integer lattice $\Zbold_+ = \set{0,1,2,\ldots}$, {\cg rooted at $\rho = 0$ and equipped with nearest neighbor edges connecting the root of $\bt_i$ with that of $\bt_{i+1}$ for $i \ge 0$}. Analogous to the definition of extended fringes for finite trees, for any $k\geq 0$, write 
$F_k(0,\bfomega)= (\bt_0, \bt_1, \ldots, \bt_k)$. 



Call this the extended fringe of the tree $\bfomega$ at vertex $0$, till distance $k$, on the infinite path from $0$. Call $\bt_0 = F_0(0,\bfomega)$ the {\bf fringe} of the {\tt sin}-tree $\bfomega$. Now suppose $\prob$ is a probability measure on $\bbT^\infty$ such that, for $\TT:= (\bt_0(\TT), \bt_1(\TT),\ldots)\sim \prob$,  $|\bt_i(\TT)|\geq 1$ a.s.  $\forall~i\geq 0$. Then $\TT$ can be thought of as an infinite {\bf random} {\tt sin}-tree. 

Define a matrix $\vQ = (\vQ(\vs,\vt): \vs, \vt \in \bbT)$ as follows: suppose the root $\rho_{\vs}$ in $\vs$ has degree $\deg(\rho_{\vs}) \ge 1$, and let $(v_1,\ldots, v_{\deg(\rho_{\vs})})$ denote its children. For $1\leq i\leq v_{\deg(\rho_{\vs})}$, let $f(\vs, v_i)$ be the subtree below $v_i$ and rooted at $v_i$, viewed {\cg as} an element of $\bbT$. Write,
\begin{align}
\label{eqn:Q-def}
	\vQ(\vs,\vt):= \sum_{i=1}^{\deg(\rho_{\vs})} \ind\set{d_{\bbT}(f(\vs, v_i), \vt) = 0}. 
\end{align} 
Thus, $Q(\vs, \vt)$ counts the number of descendant subtrees of the root of $\vs$ that are isomorphic in the topological sense to $\vt$. If $\deg(\rho_{\vs})=0$, define $Q(\vs, \vt)=0$. Now consider a sequence $(\bar \vt_0, \bar \vt_1, \dots)$ of trees in $\bbT$ such that $Q(\bar \vt_i, \bar \vt_{i-1}) \ge 1$ for all $i \ge 1$. Then there exists a unique infinite {\tt sin}-tree $\TT$ with infinite path indexed by $\Zbold_+$ {\cg constructed by identifying the root of $\bar \vt_i$ with $i$ for all $i \in \Zbold_+$}. {\cg Conversely, any infinite {\tt sin}-tree $(\vt_0, \vt_1,\dots) \in \bbT^{\infty}$ has such a representation, which is verified by taking $\bar \vt_i$ to be the union of (vertices and induced edges) of $\vt_0,\dots, \vt_i$, {\cg rooted at the root of $\vt_i$}, for each $i \in \Zbold_+$.} {\cg Following~\cite[Section 4.1]{aldous-fringe}}, we call this the \emph{monotone representation} of the {\tt sin}-tree $\TT$.

\subsubsection{Convergence on the space of trees}
\label{sec:fringe-convg-def}
For $1\leq k\leq \infty$, let $\cM_{\pr}(\bbT^k)$ denote the space of probability measures on the associated space, metrized using the topology of weak convergence inherited from the corresponding metric on the space $\bbT^k$, see, e.g.,~\cite{billingsley2013convergence}. 
Suppose $\set{\cT_n}_{n\geq 1} \subseteq \bbT$ be a sequence of {\bf finite} rooted random trees on some common probability space (for notational convenience, assume $|\cT_n|= n$, or more generally $|\cT_n|\convas \infty$). For $n\geq 1$ and for each fixed $k\geq 0$,  the empirical distribution of fringes up to distance $k$ {\cg is defined as}
\begin{align}
\label{eqn:empirical-fringe-def}
	\fP_{n}^k:= \frac{1}{n} \sum_{v\in \cT_n} \delta\set{F_k(v,\cT_n)} 
\end{align} {\cg where for $\vt^{(k)} \in \bbT^k$, $\delta\set{\vt^{(k)}}$ denotes the Dirac measure on $\cM_{\pr}(\bbT^k)$ assigning mass $1$ at $\vt^{(k)}$. }Thus, $\set{\fP_{n}^k:n\geq 1}$ can be viewed as a random sequence in  $\cM_{\pr}(\bbT^k)$. In the following, $\E[\fP_n^{0}]$ denotes the measure given by $\E[\fP_n^{0}](\vt):= \E[\fP_n^{0}(\vt)], \, \vt \in \bbT$.

\begin{defn}[Local weak convergence]
	\label{def:local-weak}
 Fix a probability measure $\varpi$ on $\bT$.
	\begin{enumeratea}
 \item \label{it:fringe-exp} Say that a sequence of trees $\set{\cT_n}_{n\geq 1}$ converges in {\bf expectation}, in the fringe sense, to $\varpi$, if \[\E[\fP_n^{0}] \to \varpi, \quad \text{ as } n\to\infty. \]
 Denote this convergence by $\TT_n\Efr \varpi$ as $n\to\infty$.
	    \item \label{it:fringe-a}  Say that a sequence of trees $\set{\cT_n}_{n\geq 1}$ converges in the probability sense, in the fringe sense, to $\varpi$, if \[\fP_n^{0} \probc \varpi, \quad \text{ as } n\to\infty. \]
	Denote this convergence by $\TT_n\probfr \varpi$ as $n\to\infty$.
	    \item \label{it:fringe-b} Say that a sequence of trees $\set{\cT_n}_{n\geq 1}$ converges in probability, in the {\bf extended fringe sense}, to a limiting infinite random {\tt sin}-tree $\TT_{\infty}$ if for all $k\geq 0$ one has
	  \[\fP_n^k \probc \prob\left(F_k(0,\TT_{\infty}) \in \cdot \right), \qquad \text{ as } n\to\infty. \]
	Denote this convergence by $\TT_n\probcrf \TT_{\infty}$ as $n\to\infty$.
	\end{enumeratea}
\end{defn} 
In an identical fashion, one can define notions of convergence in distribution or almost surely in the fringe, respectively, in the extended fringe sense. 
Letting $\varpi_{\infty}(\cdot) = \pr(F_0(0, \cT_{\infty}) = \cdot)$ denote the distribution of the fringe of $\cT_\infty$ on $\bT$, convergence in (c) above clearly implies convergence in notion (b) with $\varpi =\varpi_{\infty}(\cdot) $.
If the limiting distribution $\varpi$ in (b) has a certain `stationarity' property (defined below), \emph{convergence in the fringe sense implies convergence in the extended fringe sense} as we now describe. Under an ``extremality'' condition of the limit objection in (a), convergence in expectation implies convergence in probability as in (b). We need the following definitions.

\begin{defn}[{\cg Fringe distribution~\cite[Section 2.1]{aldous-fringe}}] \label{fringedef}
	Say that a probability measure $\varpi$ on $\bbT$ is a fringe distribution if 
	\[\sum_{\vs} \varpi(\vs) \vQ(\vs, \vt) = \varpi(\vt), \qquad \forall~\vt \in \bbT. \]
\end{defn}
It is easy to check that the space of fringe distributions $\cM_{\pr, \fringe}(\bbT) \subseteq \cM_{\pr}(\bT)$ is a convex subspace of the space of probability measure on $\bT$ and thus one can talk about extreme points of this convex subspace. The following fundamental theorem is one of the highlights of~\cite{aldous-fringe}. 
\begin{thm}[{\cg\cite[Proposition 7]{aldous-fringe}}]
\label{thm:aldous-efr-pfr}
    Fix a fringe distribution $\varpi \in \cM_{\pr, \fringe}(\bbT)$. Suppose a sequence of trees $\set{\cT_n:n\geq 1}$ converges in the expected fringe sense $\cT_n \Efr \varpi$ as $n\to\infty$. If $\varpi$ is extremal in the space of fringe measures, then the above convergence in expectation automatically implies $\cT_n \probfr \varpi$. 
\end{thm}
The advantage of this result is that for proving convergence in the probability fringe sense, at least under the extremality of the limit object, dealing with expectations is enough. 
The next result shows that convergence in the probability fringe sense often automatically implies convergence to a limit infinite {\tt sin}-tree. We need one additional definition. For any fringe distribution $\varpi$ on $\bbT$, one can uniquely obtain the law $\varpi^{EF}$ of a random {\tt sin}-tree $\TT$ with monotone decomposition $(\bar\bt_0(\TT), \bar\bt_1(\TT),\ldots)$ such that for any $i \in \Zbold_+$, any $\bar \vt_0, \bar \vt_1, \dots$ in $\bbT$,
\begin{align}\label{ftoef}
\varpi^{EF}((\bar\bt_0(\TT), \bar\bt_1(\TT),\ldots, \bar\bt_i(\TT)) = (\bar \vt_0,\vt_1,\dots,\vt_i)) := \varpi(\vt_i) \prod_{j=1}^{i}Q(\vt_i,\vt_{i-1}),
\end{align}
where the product is taken to be one if $i=0$. The following Lemma follows by adapting the proof of~\cite{aldous-fringe}*{Propositions 10 and 11}, and the proof is omitted.

\begin{lem}\label{ftoeflemma}
Suppose a sequence of trees $\set{\cT_n}_{n\geq 1}$ converges in probability, in the fringe sense, to $\varpi$. Moreover, suppose that $\varpi$ is a fringe distribution in the sense of Definition~\ref{fringedef}. Then $\set{\cT_n}_{n\geq 1}$ converges in probability, in the extended fringe sense, to a limiting infinite random sin-tree $\TT_{\infty}$ whose law $\varpi^{EF}$ is uniquely obtained from $\varpi$ via~\eqref{ftoef}.
\end{lem}

Fringe convergence and extended fringe convergence imply convergence of functionals, such as the degree distribution. For example, letting $\cT_{\varpi} \sim \varpi$ with root denoted by $0$ say, convergence in notion~\eqref{it:fringe-a} in particular implies that for any $k\geq 0$, 
\begin{align}
\label{eqn:deg-convg-fr}
	\frac1n\cdot\#\set{v\in \TT_n: \deg(v) = k+1} \convp \prob(\deg(0,\cT_{\varpi})=k).
\end{align}
However, both convergences give more information about the asymptotic properties of $\set{\cT_n:n\geq 1}$ beyond its degree distribution.

\section{Proofs}
\label{sec:proofs-meso}

\noindent {\bf Overview of the proofs:} The section contains proofs of all our main results described in Section~\ref{sec:main-res}.  We start by showing that the degree distribution converges, namely Corollary~\ref{cor:meso-deg} in Section~\ref{sec:pf-corr-deg-meso} (and also prove Lemma~\ref{lem:815} on checkable conditions on the delay distribution). Degree distribution convergence, surprisingly, can be leveraged, using stochastic approximation techniques, to show local weak convergence, leading to the proof of Theorem~\ref{thm:meso-local} in Section~\ref{sec:pf-thm-lwc-meso}. The remaining subsections contain proofs of the second-order fluctuations of the degree distribution (Theorem~\ref{thm:meso-degree-CLT})  and root degree evolution (Theorem~\ref{thm:meso-max-degree}), respectively, in the affine attachment setting. 

\subsection{Proofs of Lemma~\ref{lem:815} and Corollary~\ref{cor:meso-deg}:} 
\label{sec:pf-corr-deg-meso}

Throughout this section, for the attachment function $f$ and the delay distribution $\xi$, we will work under Assumption~\ref{ass:attach-func} (which, to some extent, is more relevant for the local weak convergence) and Assumption~\ref{ass:lipschitz} (which plays an essential role in this section). We will use $\set{\cF_n:n\geq 1}$ for the natural filtration of the process. We write $\E_\xi(.) = \E_{\xi}(. \mid \cF_n)$ to simplify notation when there is no scope for confusion.

\begin{proof}[Proof of Lemma~\ref{lem:815}:]
(a) First, we reduce the general $\gb\in[0,1)$ case to the $\gb=0$ case by considering the random variable $X:=\xi^{\frac{1}{1-\gb}}$. Note that, $\lfloor x\rfloor \ge x/2$  and $x^{1-\gb}-1=(1-\gb)\int_1^x t^{-\gb}dt\ge (1-\gb) x^{-\gb}(x-1)$ for $x\ge 1$. Moreover, $n^\gb X^{1-\gb}=n^\gb \xi\le n-1$ implies that $X^{1-\gb} \le n^{1-\gb}(1-1/n)$ or $X\le n(1-1/n)=n-1$. Thus, we get
\begin{align}
    \E\left[\frac{n^{\gb}\xi \cdot \ind{\{ n -n^{\gb}\xi \geq 1\}}}{\fplus{n-n^{\gb}\xi}}\right]
    &\le \frac{2}{1-\gb} \E\left[\frac{n^\gb X^{1-\gb} \cdot\ind{\{X \leq n-1\}}}{n-X}\right]\notag\\
    &\le \frac{4}{1-\gb} \E\left[ (X/n)^{1-\gb}\cdot \ind{\{X \leq n/2\}}\right] \nonumber\\
    &\hspace{2cm}+ \frac{4}{1-\gb} \E\left[\frac{X}{n-X}\cdot \ind{\{n/2\leq X \leq n-1\}}\right].\label{eq:pf815}
\end{align}
The first term converges to zero by DCT, and using $X\leq \lceil X\rceil$, we get 
\begin{align*}
    \E\left[\frac{X}{n-X}\cdot \ind{\{n/2\leq X \leq n-1\}}\right] 
    &\le \E\left[\frac{\lceil X\rceil}{n-\lceil X\rceil}\cdot \ind{\{n/2\leq \lceil X\rceil \leq n-1\}}\right]\\
    &=\sum_{k=1}^{n/2} \frac1k \cdot (n-k)\pr(\lceil X\rceil=n-k)\\
    &\le (\log n)\cdot \sup_{k\ge n/2}k\pr(\lceil X\rceil=k).
\end{align*}
The last term converges to zero by our assumption that $\lim_{n\to\infty}n\log n \cdot \pr(\lceil X \rceil =n)=0$.

(b) Note that for a fixed integer $\ell\ge 1$, we have
\begin{align*}
    \sum_{n=1}^\infty \frac1n\cdot \frac{\ell}{n-\ell}\cdot \ind{\{n-\ell\ge 1\}}
    &= \sum_{n=1+\ell}^\infty \left(\frac{1}{n-\ell}-\frac1n\right) = \sum_{n=1}^\ell\frac1n \le \log (1+\ell).
\end{align*}
For general $\gb\in[0,1)$ case, we use equation~\eqref{eq:pf815} with $X=\xi^\frac{1}{1-\gb}$, to get
\begin{align*}
    &(1-\gb)\cdot \sum_{n=1}^\infty \frac{1}{n}\E\left[\frac{n^{\gb}\xi \ \ind{\{ n -n^{\gb}\xi \geq 1\}}}{\fplus{n-n^{\gb}\xi}}\right] \\
    &\qquad\qquad\le \sum_{n=1}^\infty \E\left[ \frac{X^{1-\gb}}{n^{2-\gb}} \ind{\{n\geq 2X\}}\right] + 4\sum_{n=1}^\infty \frac{1}{n} \E\left[\frac{X}{n-X}\cdot \ind{\{n-X \geq 1\}}\right].
\end{align*}
Both the terms are finite by our assumptions that $\gb<1$ and $\E\log_+ \xi<\infty$.
\end{proof}

\noindent {\bf Proof of Corollary~\ref{cor:meso-deg}:} 
For the rest of the proof, we switch notation from~\eqref{eqn:deg-count} and for $k\geq 1$, and network size $n$, write $Z_k(n)$ be the number of vertices of degree $k$ at time $n$ (instead of $N_k(n)$ which will provide more flexible notation to count other functionals later such as number of fringe trees of a specific shape). Let,
\begin{align}
\label{eqn:psi-def}
    \Psi(n) = \sum_{v\in \cT(n)} f(\deg(v,n)) = \sum_{k=1}^\infty f(k) Z_k(n).
\end{align}

For vertices $i\in [n]$, define \begin{align*}
    w_1(i,n) &= \frac{f(\deg(i,n))}{\Psi(n)}\pr(n-n^{\gb}\xi < i),\\
    w_2(i,n) &= \frac{f(\deg(i,n))}{\Psi(n)}\E_\xi\left(\frac{\ind\set{n-n^{\gb}\xi \geq i}}{\Psi(\lfloor n - n^{\gb} \xi \rfloor)} \cdot \left[\Psi(\lfloor n - n^{\gb} \xi \rfloor)-\Psi(n)\right]\right),\\
    w_3(i,n) &= \E_\xi\left(\frac{\ind\set{n-n^{\gb}\xi \geq i}}{\Psi(\lfloor n - n^{\gb} \xi \rfloor)}\cdot\left[f(\deg(i,n) - f(\deg(i,\lfloor n - n^{\gb} \xi \rfloor)\right]\right),\\
    w_4(i,n) &= -\ind\set{i=1}\pr(\xi > n^{1-\beta}),
\end{align*}
where the final object is required to take into account large delays that automatically lead to new vertices connecting to the root, namely, vertex $i=1$. For any vertex $i \in [n]$, we have,
\begin{align}
\label{eqn:prob-decomp}
    &\pr\left( \text{vertex } n+1 \text{ attaches to vertex } i\mid \cF_n\right)\nonumber \\
    &\hspace{5mm}= \E_\xi \left(\frac{\ind\set{n-n^\beta\xi \geq i}}{\Psi(\lfloor n - n^{\gb} \xi \rfloor)} f(\deg(i,\lfloor n - n^{\gb} \xi \rfloor))\right) + \ind\set{i=1} \pr(n-n^\beta \xi < 1)\notag\\
    &\hspace{5mm}=\frac{f(\deg(i,n))}{\Psi(n)} - \left[\sum_{j=1}^4 w_j(i,n)\right]. 
\end{align}
\begin{lemma}
    \label{lem: deg-approx}
    For any $k\geq 1$, there exists a sequence of random variables $\set{\pi_k(n)}_{n\geq 1}$ such that for all $n\geq 1$, we have
    \begin{align}
        \E(Z_1(n+1) - Z_1(n)\mid \cF_n) &= 1 - f(1)\frac{Z_1(n)}{\Psi(n)} + \pi_1(n), \label{eqn:deg-approx1} \\
        \E(Z_k(n+1) - Z_k(n)\mid \cF_n) &= f(k-1)\frac{Z_{k-1}(n)}{\Psi(n)} - f(k)\frac{Z_k(n)}{\Psi(n)} + \pi_k(n) \qquad\text{for } k\geq 2 \label{eqn:deg-approxk}
    \end{align}
    with $\lim_{n\to \infty}\pi_k(n) \to 0$ almost surely and in $\cL^1$ for all $k\geq 1$.
\end{lemma}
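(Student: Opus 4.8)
The plan is to start from the exact one-step conditional probability decomposition in~\eqref{eqn:prob-decomp} and sum it against the appropriate indicators to build the drift equations, treating the error terms $w_1,\dots,w_4$ as the source of the remainders $\pi_k(n)$. Concretely, $Z_1(n+1)-Z_1(n)$ increases by one whenever the new vertex $v_{n+1}$ arrives (it is born a leaf) and decreases by one whenever $v_{n+1}$ attaches to a current leaf; more generally $Z_k$ increases when $v_{n+1}$ attaches to a degree-$(k-1)$ vertex and decreases when it attaches to a degree-$k$ vertex. So I would write
\[
\E(Z_k(n+1)-Z_k(n)\mid\cF_n) = \ind\{k=1\} + \sum_{i\in[n]}\big(\ind\{\deg(i,n)=k-1\}-\ind\{\deg(i,n)=k\}\big)\,\pr(v_{n+1}\to i\mid\cF_n),
\]
then substitute $\pr(v_{n+1}\to i\mid\cF_n) = f(\deg(i,n))/\Psi(n) - \sum_{j=1}^4 w_j(i,n)$. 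The ``main term'' $f(\deg(i,n))/\Psi(n)$ summed against $\ind\{\deg(i,n)=k\}$ gives exactly $f(k)Z_k(n)/\Psi(n)$, and against $\ind\{\deg(i,n)=k-1\}$ gives $f(k-1)Z_{k-1}(n)/\Psi(n)$, producing the stated main drift. One then \emph{defines}
\[
\pi_k(n) := -\sum_{i\in[n]}\big(\ind\{\deg(i,n)=k-1\}-\ind\{\deg(i,n)=k\}\big)\sum_{j=1}^4 w_j(i,n),
\]
and the entire content of the lemma reduces to showing $\pi_k(n)\to 0$ almost surely and in $\cL^1$.

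For the remainder estimates I would bound each $w_j$ contribution separately, using $|\ind\{\deg(i,n)=k-1\}-\ind\{\deg(i,n)=k\}|\le 1$ and the key structural fact that $f_*\le f(\deg(v,m))$ and (by Assumption~\ref{ass:attach-func}(ii)) $f(\deg(v,m))\le C'\deg(v,m)$, so $\Psi(m)\ge f_* m$ always while $\Psi(m)\le C'\sum_v\deg(v,m)= C'(2m-2)$ at time $m$ (a tree on $m$ vertices has $m-1$ edges). The term $\sum_i w_4(i,n)$ contributes $\pr(\xi>n^{1-\beta})\to 0$, which also matches the hypothesis~\eqref{eqn:815} (that expectation dominates $n^\beta\xi/\fplus{n-n^\beta\xi}\ge \ind\{\xi\ge n^{1-\beta}\}$ roughly). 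The term $\sum_i w_1(i,n)\le \pr(n-n^\beta\xi<1)=\pr(\xi>(n-1)n^{-\beta})$ similarly vanishes. For $\sum_i w_2(i,n)$: write $\Psi(m)-\Psi(n)=-\sum_{\ell=m}^{n-1}(\Psi(\ell+1)-\Psi(\ell))$; each increment $\Psi(\ell+1)-\Psi(\ell)$ is bounded by $f(\deg(\text{parent}))$'s change plus $f(1)$, i.e.\ at most $C''$ times the degree increment of one vertex, which is $O(1)$ in expectation uniformly, so $|\Psi(m)-\Psi(n)|\lesssim (n-m)$ and hence $\sum_i w_2(i,n)\lesssim \E_\xi[(n-\fplus{n-n^\beta\xi})/\fplus{n-n^\beta\xi}\cdot\ind\{n-n^\beta\xi\ge 1\}]$, which is exactly (a constant times) the quantity in~\eqref{eqn:815} and goes to $0$; the $\cL^1$ version uses the uniform-in-$n$ expectation bound on increments. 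For $\sum_i w_3(i,n)$, the difference $f(\deg(i,n))-f(\deg(i,\fplus{n-n^\beta\xi}))$ is controlled in each of the three cases (B)(A)--(C) of Assumption~\ref{ass:lipschitz}: in the linear case it equals the degree increment which telescopes, in the Lipschitz sublinear case it is $\le L$ times the degree increment, and in the non-decreasing sublinear case one uses $0\le f(\deg(i,n))-f(\deg(i,m))$ and sublinearity together with $\Psi$ lower bounds; in all cases after dividing by $\Psi(\fplus{n-n^\beta\xi})\ge f_*\fplus{n-n^\beta\xi}$ and summing over $i$ one again gets a multiple of the quantity in~\eqref{eqn:815}.

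For almost sure convergence I would pass from the in-expectation (conditional) bounds to a pathwise statement: since each $|\pi_k(n)|$ is dominated by $\E_\xi$ of a nonnegative quantity, and I can get summability $\sum_n\E|\pi_k(n)|<\infty$ under the stronger hypothesis~\eqref{eqn:1257} of Lemma~\ref{lem:815}(b) — or more robustly, since $\pi_k(n)\to 0$ in $\cL^1$ and is a bounded function of the history, a.s.\ convergence follows along the subsequence from Borel--Cantelli after establishing a polynomial-rate bound, or directly from the fact that $w_2,w_3$ are themselves conditional expectations of terms that go to zero a.s.\ by dominated convergence given $\xi<\infty$ a.s. The cleanest route is: fix $\omega$; for a.e.\ realization of $\xi$ (drawn freshly at each step, but the bound is deterministic in $\xi$ given the tree), $n-n^\beta\xi\to\infty$, so the ratio inside $w_2,w_3$ tends to zero, and dominated convergence in $\xi$ (domination by an integrable envelope via~\eqref{eqn:815}) gives $\pi_k(n)\to0$. \textbf{The main obstacle} I anticipate is making the $w_2$ and $w_3$ bounds genuinely uniform: one needs a deterministic or high-probability upper bound on the fluctuation $|\Psi(n)-\Psi(m)|$ and on $f(\deg(i,n))-f(\deg(i,m))$ that is summable against the delay law, and then has to handle the non-Lipschitz ``sublinear non-decreasing'' case (C), where one cannot linearly bound the difference in $f$-values by the degree increment and must instead exploit that a bounded number of degree increments happen per step together with sublinearity of $f$ to show the normalized difference still vanishes. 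Organizing these three cases uniformly, while keeping the error bounded by the single checkable quantity~\eqref{eqn:815}, is the technical heart of the argument.
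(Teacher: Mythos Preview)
Your approach is essentially the paper's: decompose via~\eqref{eqn:prob-decomp}, peel off the main drift, and bound each $W_j(n):=\sum_i\ind\{\deg(i,n)\in\{k-1,k\}\}w_j(i,n)$ separately. Two corrections are worth making. First, your bound on $\sum_i w_1(i,n)$ is wrong: $w_1(i,n)$ carries the factor $\pr(n-n^\beta\xi<i)$, which is \emph{increasing} in $i$, so the sum is not dominated by $\pr(n-n^\beta\xi<1)$. The right estimate is $\sum_{i\le n}\pr(n-n^\beta\xi<i)\le \E[n^\beta\xi\wedge n]$, and after dividing by $\Psi(n)\ge f_*n$ one gets a constant times $\E[n^{\beta-1}\xi\,\ind\{n^\beta\xi\le n-1\}]\to 0$ by DCT.

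Second, and more importantly for streamlining: once you use the deterministic inequalities $\Psi(m)\ge f_*m$ and (in the Lipschitz case) $|\Psi(n)-\Psi(m)|\le 2L(n-m)$, all of your upper bounds on the $W_j$'s become \emph{deterministic} functions of $n$ and the law of $\xi$. Hence almost-sure and $\cL^1$ convergence of $\pi_k(n)$ follow immediately from convergence of these deterministic bounds---your Borel--Cantelli and subsequence discussion is unnecessary. Relatedly, note that for $k=1$ one has $W_3(n)=0$ identically: if $\deg(i,n)=1$ then $\deg(i,m)=1$ for all $i\le m\le n$, so $f(\deg(i,n))-f(\deg(i,m))=0$. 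For case~(C) of Assumption~\ref{ass:lipschitz}, the paper does not attempt to control $f$-differences directly; instead it uses monotonicity of $\Psi$ to replace the Lipschitz bound by the crude $|\Psi(n)-\Psi(m)|/\Psi(n)\le 1$, which after summing in $i$ still produces the quantity in~\eqref{eqn:815}.
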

\begin{proof}
    Assume $f(0) = 0$ and let $\deg(i,n) = 0$ for $n < i$. We prove~\eqref{eqn:deg-approx1}, the same argument works for $k\geq 2$, namely~\eqref{eqn:deg-approxk}, by recursively leveraging the decomposition in~\eqref{eqn:prob-decomp} and inductively building up with $\pi_j(n) \convas 0$ for $j < k$. \medskip
    
    \noindent \textbf{When $f$ is Lipschitz:} with Lipschitz constant $L$, for any $m,n \geq 1$, we have
\begin{align}
\label{eqn:denom-lipschitz}
    \left|\Psi(n+m) -\Psi(n)\right| &= \sum_{i=1}^{n+m} \left|f(\deg(i,n+m)) -f(\deg(i,n))\right| \nonumber\\
    &\leq \sum_{i=1}^{n+m} L\left[\deg(i,n+m)-\deg(i,n)\right] = 2Lm.
\end{align}
Since $\inf_k f(k) = f_* > 0 $, we also have  $\Psi(n) \geq nf_*$. Next, let 
\[
W_j(n) = \sum_{i=1}^n \ind\set{\deg(i,n)=1}w_j(i,n),\text{ for } 1\leq j \leq 4.
\]
We have 
 \begin{align*}
            |W_1(n)|= \sum_{i=1}^n \ind\set{\deg(i,n) =1} w_1(i,n) 
            &\leq  \frac{f(1)}{nf_*} \E(n^{\gb} \xi \cdot \ind\set{n-n^{\gb} \xi\geq 1}) \\
            &\leq \frac{f(1)}{f_*}\E\left(n^{\gb-1}\xi\cdot\ind\set{n-n^{\gb} \xi\geq 1} \right),
        \end{align*}
    Since $n^{\gb-1}\xi\ind\set{n-n^{\gb} \xi\geq 1} \leq 1$, by DCT, we have $W_1(n)\convas 0$. Using~\eqref{eqn:denom-lipschitz}, we have 
 \begin{align*}
            |W_2(n)| &\leq \sum_{i=1}^n \ind\set{\deg(i,n) =1} |w_2(i,n)| \leq \frac{f(1)}{nf_*^2}\sum_{i=1}^n \E\left(\frac{\ind\set{n-n^{\gb}\xi \geq i}}{\lfloor n-n^{\gb}\xi\rfloor}\cdot2Ln^{\gb}\xi\right) \\
            &\leq \frac{2Lf(1)}{f_*^2}\E\left(\frac{n^{2\gb-1} \xi^2 \cdot\ind\set{n-n^{\gb}\xi \geq 1}}{\lfloor n-n^{\gb}\xi\rfloor} \right) \leq \frac{2Lf(1)}{f_*^2}\E\left(\frac{n^{\gb} \xi\cdot\ind\set{n-n^{\gb}\xi \geq 1}}{\lfloor n-n^{\gb}\xi\rfloor} \right)
        \end{align*}
Therefore under assumptions~\ref{ass:lipschitz}, we have $W_2(n)\convas 0$. Also
    \begin{align*}
        W_3(n) &=  \sum_{i \leq n}\ind\set{\deg(i,n) =1} w_3(i,n) = 0\quad\text{ and }\quad
        |W_4(n)| \leq \pr(\xi\geq n^{\beta-1}). 
    \end{align*}
Since $\pr(\xi < \infty)= 1$, we have $W_4(n)\to 0$. 
Define $$\pi_1(n) := -\left[W_1(n) + W_2(n) + W_3(n) + W_4(n)\right].$$ Note that all the upper bounds for $W_j(n)$ for $1\leq j \leq 4$ are deterministic and converge to $0$, hence $\pi_1(n) \to 0$ almost surely and in $\cL^1$.
    We have, by~\eqref{eqn:prob-decomp}
    \begin{align*}
        \E(Z_1(n+1)-Z_1(n)\mid \cF_n) &= 1 - \sum_{i=1}^n \ind\set{\deg(i,n) =1} \pr( n+1 \text{ attaches to } i\mid \cF_n)\\
     &= 1-f(1)\frac{Z_1(n)}{\Psi(n)} + \pi_1(n).
    \end{align*}

\noindent \textbf{When $f$ is sublinear and non-decreasing:} Note that the Lipschitz property of $f$ is used to bound $W_2(n)$. When $f$ is non-decreasing, $\Psi(n)$ is non-decreasing with $n$. Hence we have \begin{align*}
    \frac{|\Psi(n)-\Psi(\lfloor n-n^\beta\xi \rfloor)|}{\Psi(n)} \leq 1,
\end{align*} and therefore\begin{align}\label{eqn:special-special}
    |W_2(n)| \leq \frac{f(1)}{f_*} \sum\limits_{i=1}^n \E\left(\frac{\ind\set{n-n^\beta \xi \geq i}}{\lfloor n-n^\beta \xi\rfloor}\right) \leq \frac{f(1)}{f_*}\E\left(\frac{n^\beta \xi\cdot \ind\set{n-n^\beta \xi \geq 1}}{\lfloor n-n^\beta \xi\rfloor}\right) \to 0.
\end{align}
The rest of the argument is the same as above. Thus, when $f$ is increasing, the Lipschitz property is not required for the above argument to work. 

This proves the result for $k=1$. A similar argument for $k\geq 2$ yields~\eqref{eqn:deg-approxk}.
\end{proof}

We now complete the proof of Corollary~\ref{cor:meso-deg} using the above Lemma \ref{lem: deg-approx}. We handle the sublinear and linear attachment regimes separately.\\

\noindent\textbf{Sublinear Regime:}
Recall the normalizing factor $\Psi(n)$ defined in \eqref{eqn:psi-def}. For $n\in \bN$, we define the following collection of function-valued processes $\set{X_k^n(\cdot), k\geq 1}$ and $\Psi^n(\cdot)$ where for all $t\geq 0$
\begin{align*}
    X_k^n(t) &= \frac{Z_k(\lfloor nt \rfloor)}{n} + \frac{nt-\lfloor nt \rfloor}{n} \left(Z_k(\lceil nt \rceil) - Z_k(\lfloor nt \rfloor)\right), \ k\ge 1,\text{ and}\\
    \Psi^n(t) &= \frac{\Psi(\lfloor nt \rfloor)}{n} + \frac{nt-\lfloor nt \rfloor}{n} \left(\Psi(\lceil nt \rceil) - \Psi(\lfloor nt \rfloor) \right).
\end{align*}
For $k\geq 1$, we write
\begin{align*}
    X_k^n(t) = \frac{M_k(\lfloor nt \rfloor)}{n}  + \frac{1}{n}\left[\sum_{i=1}^{\lfloor nt \rfloor -1} \E(Z_k(i+1) - Z_k(i)\mid \cF_i)\right]
\end{align*}where $$M_k(l) = \sum_{i=1}^l [Z_k(i) - \E(Z_k(i)\mid \cF_i)],\qquad \ell\ge 1$$ is a $\cL^2-$martingale with bounded difference (difference being at most $2$). Moreover, the quadratic variation of the above martingale $M_k(l)$ is bounded above by $4l$. Therefore, by Doob's $\cL^2$ inequality, for every $T >0$ we have \begin{align}\label{eq:mg-bound}
    \pr\left(\sup\limits_{t\in [0,T]}\frac{|M_k(\lfloor nt\rfloor)|}{n} \geq \epsilon\right)\leq \frac{\E(\left< M_k(\lfloor nT\rfloor)\right>)}{n^2\epsilon^2} \leq \frac{4T}{n\epsilon^2} \to 0.
\end{align}
By Lemma~\ref{lem: deg-approx}, we have 
\begin{align*}
    Z_1(\lfloor nt \rfloor) &= \frac{M_1(\lfloor nt \rfloor)}{n}  + \frac{1}{n}\sum_{i=1}^{\lfloor nt \rfloor -1} \left[1 - f(k)\frac{Z_k(i)}{\Psi(i)} \right]+ \frac{1}{n} \sum\limits_{i=1}^{\lfloor nt \rfloor -1} \pi_1(i),\\
    Z_k(\lfloor nt \rfloor) &= \frac{M_k(\lfloor nt \rfloor)}{n}  + \frac{1}{n}\sum_{i=1}^{\lfloor nt \rfloor -1} \left[ f(k-1)\frac{Z_{k-1}(i)}{\Psi(i)} - f(k)\frac{Z_k(i)}{\Psi(i)} \right] + \frac{1}{n} \sum\limits_{i=1}^{\lfloor nt \rfloor -1} \pi_k(i). 
\end{align*}

Now rewrite this as
\begin{align}
    Z_1(\lfloor nt \rfloor) &= \frac{M_k(\lfloor nt \rfloor)}{n}  + \int_{0}^t \left[1 - f(k)\frac{Z_k(\lfloor nx \rfloor)}{\Psi(\lfloor nx \rfloor)} \right] dx + \frac{1}{n} \sum\limits_{i=1}^{\lfloor nt \rfloor -1} \pi_1(i),\label{eqn:integral-mart1}\\
    Z_k(\lfloor nt \rfloor) &= \frac{M_k(\lfloor nt \rfloor)}{n}  + \int_{0}^t \left[f(k-1)\frac{Z_{k-1}(\lfloor nx \rfloor)}{\Psi(\lfloor nx \rfloor)} - f(k)\frac{Z_k(\lfloor nx \rfloor)}{\Psi(\lfloor nx \rfloor)} \right] dx + \frac{1}{n} \sum\limits_{i=1}^{\lfloor nt \rfloor -1} \pi_k(i).\label{eqn:integral-martk}
\end{align}

Also, note that, by Lemma~\ref{lem: deg-approx}, for any $T\geq 0$ and $k\geq 1$, as $n\to \infty$, we have \begin{align}\label{eqn:cesaro-sum-limit}
   \sup_{t\leq T} \left|\frac{1}{n}\left[\sum_{i=1}^{\lfloor nt\rfloor }\pi_k(i)\right]\right| \leq \frac{1}{n}\left[\sum_{i=1}^{\lfloor nT\rfloor}|\pi_k(i)|\right] \to 0
\end{align} Therefore, the last term in~\eqref{eqn:integral-mart1} and~\eqref{eqn:integral-martk} converge to zero uniformly on $[0, T]$ for all $T>0$.

Now, observe that $X_k^n(t)$ and $\Phi^n(t)$ are Lipschitz functions (with Lipschitz constant being 2) with $X_k^n(0) = \frac{\ind\set{k=1}}{n}$ and $\Phi^n(0) = \frac{1}{n}$. Hence, the processes $X_k^n(t)$ and $\Phi^n(t)$ are tight in $D([0,T] \to \bR_+)$ for all $T>0$. Therefore for any subsequence $\set{n_m}$, there is a subsequence $\set{n_{m_l}}$ such that $X_k^{n_{m_l}}(t)$ and $ \Psi^{n_{m_l}}(t)$ converge. In~\cite{sethuraman2019sublinear}, it was shown that if $X_k^{n_{m_l}}(t) \to \varphi_k(t)$, then $\Psi^{n_{m_l}}(t) \to \sum_{k\geq 1} f(k)\varphi_k(t)$ when $\frac{f(k)}{k} \to 0$ (called \emph{(SUB)} assumption in~\cite{sethuraman2019sublinear}) and the convergence is uniform on every compact set of $[0,\infty)$.

From~\eqref{eqn:integral-mart1} and~\eqref{eqn:integral-martk}, the limiting random variables $\varphi_k(t)$ satisfy the following differential equations
\begin{align*}
    \dot\varphi_1(t) &= 1 - f(1) \frac{\varphi_1(t)}{\sum_{l\geq 1} f(l)\varphi_l(t)},\\
     \dot\varphi_k(t) &= f(k-1) \frac{\varphi_{k-1}(t)}{\sum_{l\geq 1} f(l)\varphi_l(t)} - f(k) \frac{\varphi_k(t)}{\sum_{l\geq 1} f(l)\varphi_l(t)}\qquad \text{ for } k\ge 2
\end{align*} 
with $\varphi_k(0) = 0$ for all $k$ (called `small' initial configuration in~\cites{sethuraman2019sublinear}).
It was shown in~\cite{sethuraman2019sublinear} that the above differential equations have a unique solution, and the solution is $\varphi_k(t) = p_k(f)t$. Letting $t=1$, we get 
\[
    \frac{N_k(n)}{n} \probc p_k(f) \qquad \text{for } k\geq 1.
\]
This proves Corollary~\ref{cor:meso-deg} in the sublinear regime. \\

\noindent \textbf{Linear regime:} Let $f(k) = k + \alpha$, then $\Psi(n) = (2+\alpha)n$. By Lemma~\ref{lem: deg-approx}, we have 
\begin{align*}
    \E(Z_1(n+1)) &= \left(1-\frac{1+\alpha}{(2+\alpha)n}\right)\E(Z_1(n)) + 1 + \E(\pi_1(n))
\end{align*}
and for any $k \geq 2$
\begin{align*}
    \E(Z_k(n+1)) &= \left(1-\frac{k+\alpha}{(2+\alpha)n}\right)\E(Z_k(n)) + \left(\frac{k-1+\alpha}{2+\alpha}\right)\frac{\E(Z_{k-1}(n))}{n}+ \E(\pi_k(n))
\end{align*} By Assumption~\ref{ass:lipschitz} and Lemma~\ref{lem: deg-approx}, we have $\lim\limits_{n\to\infty}\E(\pi_k) \to 0$ for all $k\ge 1$. Using~\cite{durrett-rg-book}*{Lemma 4.1.2}, we get that
\begin{align*}
    \lim_{n \to \infty} \E\left(\frac{Z_1(n)}{n}\right) &= \frac{2+\ga}{3+2\ga} = p_1(f), \\
    \lim_{n \to \infty} \E\left(\frac{Z_k(n)}{n}\right) &= \frac{k-1+\ga}{2+k+2\ga}\cdot p_{k-1}(f) = p_k(f),\quad k\ge 2.
\end{align*} 
Almost sure convergence follows from the bounded difference inequality (see~\cite{van2009random}*{Theorem 8.3}). This finishes the proof. \qed
\medskip

We end this section with an implication of the above proof.
\begin{lem}
    \label{eqn:normalizer}
    Under the assumptions of Cor.~\ref{cor:meso-deg}, the normalizing factor satisfies $\Psi(n)/n = \sum_{k=1}^\infty f(k) Z_k(n)/n \convas \lambda^* $ where $\lambda^* = \lambda^*(f)$ is the Malthusian rate of growth defined in~\eqref{eqn:malthus-def}.
\end{lem}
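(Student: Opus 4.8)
The plan is to deduce the convergence $\Psi(n)/n \convas \lambda^*$ directly from the almost-sure convergence of the degree counts $Z_k(n)/n \convas p_k(f)$ (Corollary~\ref{cor:meso-deg}), together with the identification of $\sum_k f(k) p_k(f)$ with $\lambda^*$. The identity $\sum_{k\geq 1} f(k) p_k(f) = \lambda^*$ is a short algebraic computation: using the closed form $p_k(f) = \tfrac{\gl^*}{\gl^*+f(k)}\prod_{j=1}^{k}\tfrac{f(j)}{\gl^*+f(j)}$, one checks that $f(k) p_k(f) = \gl^*\big(\prod_{j=1}^{k}\tfrac{f(j)}{\gl^*+f(j)} - \prod_{j=1}^{k}\tfrac{f(j)}{\gl^*+f(j)}\cdot\tfrac{\gl^*}{\gl^*+f(k)}\big)$; more simply, $f(k)p_k(f) = \gl^*\big(\prod_{j=1}^{k-1}\tfrac{f(j)}{\gl^*+f(j)} - \prod_{j=1}^{k}\tfrac{f(j)}{\gl^*+f(j)}\big)$ (with the empty product equal to $1$), so the sum telescopes to $\gl^*\big(1 - \lim_{k\to\infty}\prod_{j=1}^k \tfrac{f(j)}{\gl^*+f(j)}\big) = \gl^*$, the last product being zero since $\hat\rho(\gl^*)=1<\infty$ forces its terms to be summable.

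The only genuine issue is the interchange of the limit $n\to\infty$ with the infinite sum $\sum_k f(k) Z_k(n)/n$, i.e.\ passing from $Z_k(n)/n \to p_k(f)$ for each fixed $k$ to $\sum_k f(k)Z_k(n)/n \to \sum_k f(k)p_k(f)$. I would handle this by a uniform-integrability / dominated-convergence argument in $k$. First, in the linear case $f(k)=k+\alpha$ there is nothing to prove: $\Psi(n) = (2+\alpha)n$ exactly, and $\lambda^* = 2+\alpha$, so $\Psi(n)/n \equiv \lambda^*$. In the sublinear case, fix any $\eps>0$; by Assumption~\ref{ass:attach-func}(ii), $f(k)\le C'k$, and a vertex of degree $k$ at time $n$ must have received $k-1$ incoming edges, so $\sum_k (k-1) Z_k(n) = n-1$, giving the deterministic bound $\sum_{k> K} f(k) Z_k(n)/n \le C'\sum_{k>K}k Z_k(n)/n$. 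The tail $\sum_{k>K} k Z_k(n)$ is controlled because $\sum_k k Z_k(n) = \sum_k (k-1)Z_k(n) + \sum_k Z_k(n) = (n-1) + n \le 2n$, so the total mass is bounded, but this alone does not make the tail small uniformly in $n$. To get uniform smallness of the tail one uses that the maximal degree is $o(n)$ with high probability (indeed, in the sublinear regime the root degree is $o(n^{\theta})$ for any relevant exponent, and more crudely no degree exceeds $n$), hence for each fixed $K$ only $O(1)$ values of $k$ carry nonnegligible mass; the cleanest route is to note that $\sum_{k> K} k Z_k(n)/n \le \tfrac1n\sum_{v}\deg(v,n)\ind\{\deg(v,n)>K\}$ and bound this using a first-moment estimate on $\E[\sum_v \deg(v,n)\ind\{\deg(v,n)>K\}]$, which from the recursions for $\E Z_k(n)$ in the proof of Corollary~\ref{cor:meso-deg} (or from the CTBP comparison) is $o(n)$ once $K$ is large, uniformly in $n$.

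Granting the tail bound, the argument concludes as follows: for fixed $K$,
\begin{align*}
\Big|\frac{\Psi(n)}{n} - \lambda^*\Big|
\le \sum_{k=1}^{K} f(k)\Big|\frac{Z_k(n)}{n} - p_k(f)\Big|
+ \sum_{k>K} f(k)\frac{Z_k(n)}{n}
+ \sum_{k>K} f(k) p_k(f).
\end{align*}
The first sum tends to $0$ a.s.\ by Corollary~\ref{cor:meso-deg}; the third is deterministic and $\le\eps$ for $K=K(\eps)$ large since $\sum_k f(k)p_k(f)=\lambda^*<\infty$; and the middle term is $\le\eps$ (with high probability, or in expectation, upgraded to a.s.\ along the argument above) for $K$ large, uniformly in $n$. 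Letting $n\to\infty$ and then $\eps\downarrow 0$ gives $\Psi(n)/n \convas \lambda^*$. The main obstacle is precisely making the middle-term tail bound \emph{almost sure and uniform in $n$} rather than merely in expectation; I expect this to follow from the same martingale/bounded-difference concentration (Azuma–Hoeffding, as in \cite{van2009random}*{Theorem 8.3}) already invoked for the individual $Z_k(n)$, applied to the truncated functional $\sum_{k\le K} f(k) Z_k(n)$ together with the deterministic identity $\sum_k(k-1)Z_k(n)=n-1$ which pins down the total mass and hence forces the tail to be small once the bulk is accounted for.
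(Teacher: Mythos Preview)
Your overall strategy is sound and more self-contained than the paper's, but there is a gap in the sublinear case that you yourself flag as the ``main obstacle'' and then try to resolve with unnecessarily heavy machinery. The fix is one line: you are in the \emph{sublinear} regime of Assumption~\ref{ass:lipschitz}(b), so $f(k)/k \to 0$, not merely $f(k) \le C'k$. Given $\eps>0$, choose $K$ so that $f(k)\le \eps k$ for all $k>K$; then, since $\sum_{k\ge 1} k\, Z_k(n) = \sum_{v\in\cT(n)} \deg(v,n) = 2(n-1)$, the middle term satisfies
\[
\sum_{k>K} f(k)\,\frac{Z_k(n)}{n} \;\le\; \eps \sum_{k\ge 1} k\,\frac{Z_k(n)}{n} \;\le\; 2\eps,
\]
which is a \emph{deterministic} bound uniform in $n$. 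No maximal-degree estimates, first-moment recursions, or Azuma--Hoeffding are needed. You invoked the weaker linear-growth bound from Assumption~\ref{ass:attach-func}(ii) instead of sublinearity, and that is precisely why you got stuck.

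For comparison, the paper's proof is shorter but less self-contained. In the linear case it computes $\Psi(n)=2(n-1)+n\alpha$ directly (your $(2+\alpha)n$ is off by an additive constant, harmless for the limit). In the sublinear case it simply invokes Step~2 in the proof of~\cite{sethuraman2019sublinear}*{Theorem 2.3}, a result already used inside the proof of Corollary~\ref{cor:meso-deg} to establish $\Psi^n(t)\to\sum_k f(k)\varphi_k(t)$. Your truncation argument, once repaired as above, gives an elementary proof that avoids that external citation. The telescoping computation identifying $\sum_{k\ge 1} f(k)p_k(f)=\lambda^*$ is correct and matches the paper's.
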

\begin{proof}
    In the linear regime $\Psi(n) = \sum_{v\in \cT(n)} (\deg(v,n) + \alpha) = 2(n-1) +n\alpha$ so that $\Psi(n)/n \convas 2+\alpha = \lambda^*$. For  the sublinear regime, this directly follows from step $2$ in the proof of~\cite{sethuraman2019sublinear}*{Theorem 2.3} which implies that
    \begin{align*}
    \frac{\Psi(n)}{n} \convas \sum\limits_{k=1}^\infty f(k)p_k(f) = \sum_{k=0}^\infty f(k)\frac{\gl^*}{\gl^* + f(k)}\prod_{i=0}^{k-1} \frac{f(i)}{\gl^* + f(i)} = \gl^* \sum_{k=0}^\infty \prod_{i=0}^{k} \frac{f(i)}{\gl^* + f(i)}.
\end{align*}
By definition in~\eqref{eqn:malthus-def}, $\gl^*$ is the unique positive constant satisfying, 
$\sum_{k=0}^\infty \prod_{i=0}^{k} \frac{f(i)}{\gl^* + f(i)} = 1$.  
\end{proof}

\subsection{Proof of Local weak convergence, Theorem~\ref{thm:meso-local}:}
\label{sec:pf-thm-lwc-meso}

We start with some terminology following~\cite{rudas2007random}. Let $\overrightarrow{\bbT}$ be the space $\bbT$ but where for every element $\vt$:
	\begin{enumeratea}
		\item \textcolor{black}{The vertices of $\vt$ are labelled using the Ulam-Harris set $\cN = \cup_{n=0}^\infty \bN^n$ with $\bN^0 = \set{\emptyset}$ denoting the root of $\vt$. This, in particular, gives the birth order of the children of each vertex.}
		\item Every edge has a direction from child to parent. For any vertex $v\in \vt$ other than the root, let $\fm(v)$ denote the parent of $v$ ($\fm$, a mnemonic for ``mother''). 
	\end{enumeratea}
  Note that the topology of any tree $\vt \in \overrightarrow{\bbT}$ is uniquely determined by its list of vertex labels. For $\vt\in \overrightarrow{\bbT}$, let $\cH(\vt)$ denote the collection of all historical orderings of $\vt$. More precisely,  $\sch_{\vt} = (v_0 = \emptyset, v_1,\ldots, v_{|\vt|-1}) \subseteq \cN^{|\vt|} $ is called a historical ordering of $\vt$ if it gives a possible birth ordering of vertices in $\vt$ starting from $\emptyset$ till its completion.  Formally for each $0\leq i\leq |\vt| -1$, $\cT(\sch_{\vt},i) := \set{v_0, v_1, \ldots, v_i} \in \overrightarrow{\bbT}$. 
  
 Given a fixed tree $\cT \in \overrightarrow{\bbT}$, for $0\leq i\leq |\cT|-1 $, let $\deg({v_i},\cT)$ denote the number of children of ${v_i}$ in $\cT$. Define the weight of $\cT$ as:
\[\vW(\cT) := \sum_{i=0}^{|\cT| -1}  f(\deg(v_i, \cT)),\]
where as before, $f$ is the attachment function. 
 Next given  $\vt \in \overrightarrow{\bbT}$ and a historical ordering $\sch_{\vt} \in \cH(\vt)$, define the following sequence of weight functionals: 
\textcolor{black}{\begin{align}
	\vW(\vt, \sch_{\vt}, k)&:= \vW(\cT(\sch_{\vt}, k)), \qquad 0\leq k\leq |\vt|-1, \notag \\ 
	 \vw_{\cic}(\vt, \sch_{\vt}, k+1) &:= f(\deg(\fm(v_{k+1}),\cT(\sch_{\vt}, k))), \qquad 0\leq k\leq |\vt|-2, \label{eqn:weight-order-def}
\end{align}}namely the weight associated to the number of children the parent of the vertex entering the system at step $k+1$, before the attachment at step $k+1$; as described in~\cite{rudas2007random}, conceptually this is the rate the associated continuous time branching process jumps from  $\cT(\sch_{\vt}, k)$ to $\cT(\sch_{\vt}, k+1)$. Note that for any $\vt$, the final term in the sequence is the same, irrespective of the ordering, 
\begin{align}
\label{eqn:314}
	\vW(\vt, \sch_{\vt}, |\vt|-1) = \sum_{i=0}^{|\vt|-1}  f(\deg(v_i, \vt)) = \vW(\vt). 
\end{align}
Next, given $\vt \in \overrightarrow{\bbT}$, note that each vertex $v\in \vt$ (with non-zero children) has an ordering of the children from oldest to youngest, and thus one can talk about the youngest child $y_{v}$ of $v$. For $0\leq v\leq |\vt|-1$ define the indicators,
\begin{align}
\label{eqn:ind-def}
	I_v:= \ind\set{v \mbox{ has child in $\vt$ and $y_{v}$ is a leaf in $\vt$}}. 
\end{align}
If $I_v= 1$, denote by $\vt^{\sss(v)}$ the tree obtained by deleting the youngest child of $v$ (which is necessarily a leaf). Now recall the Malthusian rate of growth parameter $\gl^*$ as in~\eqref{eqn:malthus-def}. Define the measure, 
\begin{align}
\label{eqn:pm-def}
	\fp(\vt):= \sum_{\sch_{\vt} \in \cH(\vt)} \frac{\gl^*}{\gl^*+\vW(\vt)} \prod_{k=0}^{|\vt|-2} \left[\frac{\vw_{\cic}(\vt, \sch_{\vt}, k+1)}{\gl^*+ \vW(\vt, \sch_{\vt}, k)}\right], \qquad \vt \in \overrightarrow{\bbT}. 
\end{align}
Here, the product above is taken to be one if $|\vt|=1$. Recall the asserted limit in Theorem~\ref{thm:meso-local} with fringe distribution $\varpi_{\BP_f}$ constructed as in Definition~\ref{def:limit-bp-meso}. 

\begin{prop}
	\label{prop:prop-of-fp}
	\begin{enumeratea}
		\item The measure $\fp(\cdot)$ in~\eqref{eqn:pm-def} is in fact a probability measure. Also, $\fp \equiv \fpm_{\BP_f}$ namely the fringe distribution associated to the $\BP_f$ as in Definition~\ref{def:limit-bp-meso}.  
		\item $\fp(\cdot)$ is the unique measure on $\overrightarrow{\bbT}$ satisfying the recursive equation 
		\begin{align}
		\label{eqn:recur-first-part}
			\fpm(\vt) = \frac{\sum_{v\in \vt} I_v  \cdot  \fpm(\vt^{\sss(v)}) f(\deg(v,\vt)-1)}{\gl^*+ \vW(\vt)},
		\end{align}
		with boundary conditions for $\vt = \set{v_0}$, i.e., a tree consisting of a single vertex given by 
		\begin{align}
		\label{eqn:recur-boundary}
			\fpm(\vt) = \frac{\gl^*}{\gl^*+ f(1)}. 
		\end{align}
	\end{enumeratea}
\end{prop}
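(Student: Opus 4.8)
\emph{Plan.} I would prove (a) by identifying $\fp$ with the law of the continuous-time branching process $\BP_f$ observed at the independent $\exp(\gl^{*})$-time $T_{\gl^{*}}$, and prove (b) by ``peeling off the last-born vertex'' in the defining sum~\eqref{eqn:pm-def}, followed by a one-line induction for uniqueness. Throughout I use the description of $\BP_f$ in which an individual of (genealogical) degree $d$ waits an independent $\exp(f(d))$ time to acquire its next child, so that when the current genealogical tree is $\vs$ the total reproduction rate is exactly $\vW(\vs)=\sum_{u}f(\deg(u,\vs))$. \textbf{Step 1 ($\fp$ is the law of $\BP_f(T_{\gl^{*}})$, hence (a)).} First record that $\BP_f$ is non-explosive: since $f(k)\le C'k$ by Assumption~\ref{ass:attach-func}(ii), a population of $m$ individuals has total reproduction rate $\sum_u f(\deg(u,\cdot))\le C'\sum_u\deg(u,\cdot)= C'(2m-1)\le 2C'm$, so $|\BP_f(t)|$ is stochastically dominated by a linear pure-birth (Yule) process, which does not explode; hence $\BP_f(t)\in\overrightarrow{\bbT}$ is a.s.\ finite for every $t$, in particular for $t=T_{\gl^{*}}$. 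View $(\BP_f,\text{clock }T_{\gl^{*}})$ as a Markov jump process: by memorylessness of all the exponential clocks (and the fact that, conditionally on no ring so far, the residual time to $T_{\gl^{*}}$ is again $\exp(\gl^{*})$), from a state $\vs$ the next event is ``$T_{\gl^{*}}$ rings'' with probability $\gl^{*}/(\gl^{*}+\vW(\vs))$ and ``individual $u$ gains a child'' with probability $f(\deg(u,\vs))/(\gl^{*}+\vW(\vs))$, the choices at distinct jumps being conditionally independent. Partitioning $\{\BP_f(T_{\gl^{*}})=\vt\}$ by the order $\sch_{\vt}\in\cH(\vt)$ in which the vertices of $\vt$ were born, the branch with history $\sch_{\vt}$ has probability $\bigl(\prod_{k=0}^{|\vt|-2}\vw_{\cic}(\vt,\sch_{\vt},k+1)/(\gl^{*}+\vW(\vt,\sch_{\vt},k))\bigr)\cdot\gl^{*}/(\gl^{*}+\vW(\vt))$, since $\vw_{\cic}(\vt,\sch_{\vt},k+1)$ is by~\eqref{eqn:weight-order-def} the reproduction rate of the individual gaining a child at step $k+1$; summing over $\sch_{\vt}$ gives $\pr(\BP_f(T_{\gl^{*}})=\vt)=\fp(\vt)$. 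As $\BP_f(T_{\gl^{*}})$ is a.s.\ finite, $\sum_{\vt}\fp(\vt)=1$, so $\fp$ is a probability measure, and it equals $\fpm_{\BP_f}$ by Definition~\ref{def:limit-bp-meso}; this is (a).

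\textbf{Step 2 ($\fp$ satisfies~\eqref{eqn:recur-first-part}--\eqref{eqn:recur-boundary}).} By~\eqref{eqn:314} the factor $\gl^{*}/(\gl^{*}+\vW(\vt))$ in~\eqref{eqn:pm-def} is independent of $\sch_{\vt}$, so $\fp(\vt)=\tfrac{\gl^{*}}{\gl^{*}+\vW(\vt)}\,\fq(\vt)$ where $\fq(\vt):=\sum_{\sch_{\vt}\in\cH(\vt)}\prod_{k=0}^{|\vt|-2}\vw_{\cic}(\vt,\sch_{\vt},k+1)/(\gl^{*}+\vW(\vt,\sch_{\vt},k))$. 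For $|\vt|=1$ the product is empty and $\vW(\{v_0\})=f(1)$, which is~\eqref{eqn:recur-boundary}. For $|\vt|\ge 2$, group the histories $\sch_{\vt}=(v_0,\dots,v_{|\vt|-1})$ by the last vertex $v_{|\vt|-1}$: it must be a leaf of $\vt$ and the youngest child $y_v$ of its parent $v=\fm(v_{|\vt|-1})$, so $I_v=1$, $\cT(\sch_{\vt},|\vt|-2)=\vt^{\sss(v)}$, and $(v_0,\dots,v_{|\vt|-2})$ is a history of $\vt^{\sss(v)}$; conversely every history of $\vt^{\sss(v)}$ (for $v$ with $I_v=1$) extends uniquely to one of $\vt$ by appending $y_v$. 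Using $\vW(\vt,\sch_{\vt},|\vt|-2)=\vW(\vt^{\sss(v)})$ and $\vw_{\cic}(\vt,\sch_{\vt},|\vt|-1)=f(\deg(v,\vt^{\sss(v)}))=f(\deg(v,\vt)-1)$ yields $\fq(\vt)=\sum_{v:\,I_v=1}\tfrac{f(\deg(v,\vt)-1)}{\gl^{*}+\vW(\vt^{\sss(v)})}\,\fq(\vt^{\sss(v)})$, and substituting $\fq(\vt^{\sss(v)})=\tfrac{\gl^{*}+\vW(\vt^{\sss(v)})}{\gl^{*}}\fp(\vt^{\sss(v)})$ gives precisely~\eqref{eqn:recur-first-part}.

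\textbf{Step 3 (uniqueness, and the main difficulty).} Every $\vt$ with $|\vt|\ge2$ has some $v$ with $I_v=1$ --- e.g.\ a deepest non-leaf vertex, all of whose children are necessarily leaves, so $y_v$ is a leaf --- hence the right-hand side of~\eqref{eqn:recur-first-part} depends only on the values of the unknown measure on trees of size $|\vt|-1$; together with~\eqref{eqn:recur-boundary} fixing the unique one-vertex tree, induction on $|\vt|$ shows~\eqref{eqn:recur-first-part}--\eqref{eqn:recur-boundary} have at most one solution, which by Step~2 is $\fp$. This gives (b). The only genuinely delicate point is Step~1: making the competing-exponentials bookkeeping rigorous --- that the single clock $T_{\gl^{*}}$ may be ``refreshed'' at every jump, and that the branches indexed by $\sch_{\vt}$ really partition $\{\BP_f(T_{\gl^{*}})=\vt\}$ --- and, upstream of it, the non-explosion of $\BP_f$, which is what upgrades $\sum_{\vt}\fp(\vt)\le1$ to an equality. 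Everything else is algebra. (Note that the Malthusian identity $\hat{\rho}(\gl^{*})=1$ is not used in this proposition; it enters only later, when one verifies that $\fpm_{\BP_f}$ is a \emph{fringe} distribution in the sense of Definition~\ref{fringedef}.)
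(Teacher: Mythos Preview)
Your proof is correct and, for part~(b), takes essentially the same route as the paper: both arguments partition the historical orderings $\cH(\vt)$ according to the identity of the last-born vertex (necessarily the youngest child $y_v$ of some $v$ with $I_v=1$), peel off the final factor in the product~\eqref{eqn:pm-def}, and recognize what remains as $\fp(\vt^{\sss(v)})$; uniqueness then follows by induction on $|\vt|$. Your introduction of the auxiliary $\fq(\vt)$ is a minor notational convenience but the algebra is identical.

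The only real difference is in part~(a). The paper does not prove this directly but simply cites \cite{rudas2007random}*{Theorem~2(b)}. You instead supply a self-contained argument via the competing-exponentials description of $(\BP_f,T_{\gl^*})$ as a Markov jump chain, together with a non-explosion bound (domination by a Yule process using $f(k)\le C'k$). This is a genuine and correct proof of the same fact; it buys independence from the cited reference at the cost of having to spell out the memorylessness bookkeeping and the non-explosion step, both of which you handle correctly. Your closing remark that the Malthusian identity $\hat\rho(\gl^*)=1$ plays no role in this proposition is also accurate.
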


\begin{proof}
Part(a) is proved in~\cite{rudas2007random}*{Theorem 2(b)}. To prove part (b) of the Proposition, it is enough to show the following two properties (as these can be recursively used to uniquely define $\fp$ via the formula in~\eqref{eqn:pm-def}):
\begin{enumeratei}
		\item $\fp$ satisfies the boundary conditions namely~\eqref{eqn:recur-boundary}. This is immediate by the definition of $\fp$ in~\eqref{eqn:pm-def}. 
		\item $\fp$ satisfies the recursive equation~\eqref{eqn:recur-first-part}: Now consider $\vt$ with $|\vt|\geq 2$. For ease of notation, for any vertex $u\in \vt$, write $d_u = \deg(u,\vt) $ for the degree in $\vt$.  Let $\cY_{\vt}:=\set{v: I_v =1}$ denote the collection of vertices whose youngest child is a leaf. Note that the historical orderings $\cH(\vt)$ can be partitioned as the disjoint union 
		\[\cH(\vt) = \sqcup_{u\in \cY_{\vt}} \set{\sch_{\vt} = (v_0, v_1, \ldots, v_{|\vt|-1}) \in \cH(\vt): v_{|\vt|-1} = y_u }=: \sqcup_{u\in \cY_{\vt}} \tilde{\cH}^{\sss(u)}(\vt),\] 
		, based on the identity of the final vertex (necessarily a leaf) that was adjoined to form $\vt$. \textcolor{black}{Further $\tilde{\cH}^{\sss(u)}(\vt)$ can be viewed as all historical orderings of the tree $\vt^{\sss(u)}$ followed by the addition of $y_u$ in the final step to obtain $\vt$. Thus, with a minor abuse of notation, we will write $\tilde{\cH}^{\sss(u)}(\vt) = {\cH}(\vt^{\sss(u)})$}. Now starting with the form of $\fp$ in~\eqref{eqn:pm-def}, using the above partition of $\cH(\vt)$ and the form of the weight sequences in~\eqref{eqn:weight-order-def}, we get, 
		\begin{align*}
			\fp(\vt) &= \sum_{u\in \cY_{\vt}} \sum_{\sch \in \cH(\vt^{\sss(u)})} \frac{\gl^*}{\gl^*+\vW(\vt)} \prod_{k=0}^{|\vt^{\sss(u)}|-2} \left[\frac{\vw_{\cic}(\vt^{\sss(u)}, \sch_{\vt^{\sss(u)}}, k+1)}{\gl^*+ \vW(\vt^{\sss(u)}, \sch_{\vt^{\sss(u)}}, k)}\right]\frac{f((d_u-1))}{(\gl^*+\vW(\vt^{\sss(u)}))}, \\
			&= \sum_{u\in \cY_{\vt}} \sum_{\sch \in \cH(\vt^{\sss(u)})} \frac{\gl^*}{\gl^*+\vW(\vt^{\sss(u)})} \prod_{k=0}^{|\vt^{\sss(u)}|-2} \left[\frac{\vw_{\cic}(\vt^{\sss(u)}, \sch_{\vt^{\sss(u)}}, k+1)}{2+ \vW(\vt^{\sss(u)}, \sch_{\vt^{\sss(u)}}, k)}\right]\frac{f((d_u-1))}{(\gl^*+\vW(\vt))}, \\ 
			&=  \frac{\sum_{u\in \cY_{\vt}} \fp(\vt^{\sss(u)})(f(d_u-1))}{\gl^*+\vW(t)}, \quad \mbox{using~\eqref{eqn:pm-def}.}
		\end{align*}
	\end{enumeratei}  
		This completes the proof. 
\end{proof}

\noindent {\bf Completing the Proof of Theorem~\ref{thm:meso-local}:} Recall the normalizing constant $\Psi(n) = \Psi_n$ in Lemma~\ref{eqn:normalizer}. For the rest of the proof, let $\Phi_n = n/\Psi_n$ so that by Lemma~\ref{eqn:normalizer}, $\Phi_n \convas (\gl^*)^{-1}$. From Section~\ref{sec:fringe-convg-def}, recall that for $\cT\in \overrightarrow{\bbT}$ and $v\in \cT$, $f_0(v,\cT) \in \overrightarrow{\bbT}$ denotes the fringe of vertex $v \in \cT$. For fixed $n\geq 1$, define the empirical count and fringe density in $\cT_n$ respectively via, 
\[
c_n(\vt):= \sum_{v\in \cT_n}\ind\set{f_0(v,\cT_n) = \vt}, \qquad 
\fP_n(\vt):= \frac{c_n(\vt)}{n}  \quad \text{for } \vt\in \overrightarrow{\bbT}.
\]  
The main goal for the rest of this section is to prove the following result. 
\begin{prop}
    \label{prop:exp-fring}
    Under the assumptions of Theorem~\ref{thm:meso-local}, with $\varpi_{\BP_f}$ as in Prop.~\ref{prop:prop-of-fp}, we have convergence in the expected fringe sense $\cT(n) \Efr \varpi_{\BP_f} $ as $n\to\infty$. 
\end{prop}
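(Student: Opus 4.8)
The plan is to establish the equivalent claim that $\E[\fP_n(\vt)] = \E[c_n(\vt)]/n \to \fp(\vt)$ for every ordered rooted tree $\vt \in \overrightarrow{\bbT}$, where $\fp$ is the probability measure of Proposition~\ref{prop:prop-of-fp} (so $\fp \equiv \fpm_{\BP_f}$, and convergence on the space $\bbT$ of unlabelled rooted trees then follows by summing over sibling orderings). I would induct on $|\vt|$. The base case $|\vt| = 1$ is the single-vertex fringe, for which $c_n(\vt) = Z_1(n)$ is the leaf count; the proof of Corollary~\ref{cor:meso-deg} gives $\E[Z_1(n)]/n \to p_1(f)$, which equals the boundary value $\gl^*/(\gl^*+f(1))$ of $\fp$ recorded in~\eqref{eqn:recur-boundary}.

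For the inductive step, fix $\vt$ with $|\vt| \ge 2$ and assume the claim for all ordered trees of smaller size. I would first write down a one-step recursion for $c_n(\vt)$. When $v_{n+1}$ enters and attaches to its parent $v'$ in $\cT(n)$, the only fringes that change are those of $v'$ and its ancestors, and along this root-path the fringe sizes are strictly increasing; hence at most one copy of $\vt$ is destroyed and at most one is created per step. A creation at $u$ occurs precisely when, for some $v \in \vt$ with $I_v = 1$, one has $f_0(u,\cT(n)) \simeq \vt^{\sss(v)}$ and $v_{n+1}$ attaches to the (unique) vertex of $f_0(u,\cT(n))$ corresponding to $v$; a destruction at $u$ occurs iff $f_0(u,\cT(n)) \simeq \vt$ and $v_{n+1}$ attaches to some vertex of $f_0(u,\cT(n))$. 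Inserting the attachment-probability decomposition~\eqref{eqn:prob-decomp} into the resulting conditional expectation yields
\begin{align*}
	\E\big[c_{n+1}(\vt) - c_n(\vt) \mid \cF_n\big] = \frac{1}{\Psi(n)}\Big[\sum_{v\in\vt} I_v\, f(\deg(v,\vt)-1)\, c_n(\vt^{\sss(v)}) - \vW(\vt)\, c_n(\vt)\Big] + R_n(\vt) + \eps_n(\vt),
\end{align*}
where $R_n(\vt) = O(1/\Psi(n))$ is a correction due to the unique vertex that is the root of $\cT(n)$ (whose degree lacks the usual $+1$), and $\eps_n(\vt)$ gathers the delay errors, namely a signed sum over creation/destruction positions of $\sum_{j=1}^4 w_j(\cdot, n)$. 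Taking expectations, I would use $\Psi(n)/n \convas \gl^*$ (Lemma~\ref{eqn:normalizer}) together with $\Psi(n) \ge n f_*$ to replace $\Psi(n)^{-1}$ by $(\gl^* n)^{-1}$ at the cost of an $o(1)$ term (bounded convergence, using $c_n(\cdot) \le n$), and the inductive hypothesis to get $\E[c_n(\vt^{\sss(v)})]/n \to \fp(\vt^{\sss(v)})$. This casts $a_n := \E[c_n(\vt)]$ in the form $a_{n+1} = (1 - \tfrac{\vW(\vt)/\gl^*}{n}) a_n + b_n$ with $b_n \to \tfrac{1}{\gl^*}\sum_{v\in\vt} I_v f(\deg(v,\vt)-1)\fp(\vt^{\sss(v)})$, provided $\E[\eps_n(\vt)] \to 0$; then \cite{durrett-rg-book}*{Lemma 4.1.2}, applied exactly as in the proof of Corollary~\ref{cor:meso-deg}, gives
\begin{align*}
	\frac{\E[c_n(\vt)]}{n} \longrightarrow \frac{\sum_{v\in\vt} I_v\, f(\deg(v,\vt)-1)\,\fp(\vt^{\sss(v)})}{\gl^* + \vW(\vt)} = \fp(\vt),
\end{align*}
the last equality being the recursion~\eqref{eqn:recur-first-part} of Proposition~\ref{prop:prop-of-fp}(b). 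This closes the induction. (The no-delay version of this argument is that of~\cite{rudas2007random}; the delay enters only through $\eps_n$.)

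It remains to show $\E[\eps_n(\vt)] \to 0$, which is the only genuinely delay-specific part. Since every creation/destruction position is a vertex of $\cT(n)$ of degree at most $\Delta_\vt + 1$, where $\Delta_\vt$ is the maximal out-degree in $\vt$, and since (by the nesting of fringes along root-paths) each vertex of $\cT(n)$ lies in at most one copy of $\vt$ and at most one copy of any $\vt^{\sss(v)}$, one has the deterministic bound $|\eps_n(\vt)| \le \sum_{j=1}^4 \sum_{i : \deg(i,n) \le \Delta_\vt + 1} |w_j(i,n)|$. Each of these four sums can then be estimated exactly as the corresponding $W_j(n)$ in the proof of Lemma~\ref{lem: deg-approx}, with $f(1)$ replaced by $C_\vt := \max_{1 \le d \le \Delta_\vt + 1} f(d)$: the $w_1$- and $w_4$-sums vanish by dominated convergence (using $n^{\gb-1}\xi\,\ind\{n-n^\gb\xi \ge 1\} \le 1$) and $\pr(\xi < \infty) = 1$, and the $w_2$-sum by the Lipschitz estimate~\eqref{eqn:denom-lipschitz} — or, in the non-decreasing sublinear case, the monotonicity bound from the remark following Lemma~\ref{lem: deg-approx} — plus dominated convergence. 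The new point, compared with Lemma~\ref{lem: deg-approx}, is the $w_3$-sum: there $\Delta_\vt$ would be $1$ and the term vanishes identically, but here the relevant vertices may have positive out-degree and their degrees can change within the delay window; writing $m_n := \fplus{n - n^\gb\xi}$ and using Lipschitz continuity of $f$ (resp.\ monotonicity and boundedness on $[1,\Delta_\vt+1]$) together with $\sum_{i \le m_n} (\deg(i,n) - \deg(i,m_n)) \le 2(n - m_n) \le 2 n^\gb\xi$, the $w_3$-sum is at most $\tfrac{2LC_\vt}{f_*}\, \E\big[\tfrac{n^\gb\xi\,\ind\{n-n^\gb\xi \ge 1\}}{\fplus{n - n^\gb\xi}}\big]$, which tends to $0$ precisely by Assumption~\ref{ass:lipschitz}, that is, by~\eqref{eqn:815}. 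This is where~\eqref{eqn:815} is finally used.

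The main obstacle is not the delay estimates above — which, apart from noticing that the $w_3$-contribution is now nonzero and is killed by~\eqref{eqn:815}, are a routine upgrade of Lemma~\ref{lem: deg-approx} from degree classes to fringe classes — but rather the bookkeeping in the one-step recursion: correctly determining which fringes change at each step, verifying the ``at most one creation and one destruction per step'' claim through the size-monotonicity along root-paths, tracking the $O(1/n)$ degree-versus-out-degree discrepancy at the root, and, most importantly, recognizing that the resulting fixed-point identity is exactly the recursion~\eqref{eqn:recur-first-part}, so that the uniqueness statement in Proposition~\ref{prop:prop-of-fp}(b) pins the limit down to $\fp$. Passing from the random normalizer $\Psi(n)$ to $\gl^* n$ requires the almost-sure (hence, by boundedness, $\cL^1$) convergence recorded in Lemma~\ref{eqn:normalizer}, which is precisely why that lemma was isolated.
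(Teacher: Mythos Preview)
Your proposal is correct and follows essentially the same route as the paper: induction on $|\vt|$ with base case from Corollary~\ref{cor:meso-deg}, the one-step recursion for $c_n(\vt)$ obtained by inserting the decomposition~\eqref{eqn:prob-decomp} (the paper packages this as a separate Lemma giving the $\pm 1$ transition probabilities), bounding the four delay error sums exactly as you describe, replacing $\Psi(n)$ by $\gl^* n$ via Lemma~\ref{eqn:normalizer}, and closing with \cite{durrett-rg-book}*{Lemma 4.1.2} and the recursion of Proposition~\ref{prop:prop-of-fp}(b). Your explicit handling of the $w_3$-sum via $\sum_i(\deg(i,n)-\deg(i,m_n))\le 2n^\gb\xi$ and your remark on the root's degree discrepancy are details the paper leaves implicit, but the overall architecture is the same.
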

 \noindent {\bf Proof of Theorem~\ref{thm:meso-local} assuming Prop.~\ref{prop:exp-fring}:} As proved in~\cite{aldous-fringe} {\cg(see paragraph after \cite[Theorem 13]{aldous-fringe})}, $\varpi_{\BP_f}$ is an extremal fringe distribution. 
Theorem~\ref{thm:aldous-efr-pfr} now completes the proof. 

\qed

\noindent {\bf Proof of Proposition~\ref{prop:exp-fring}:}
We start with the following stochastic approximation Lemma. 
\begin{lemma}
\label{lem:approx}
    For any $\vt \in \overrightarrow{\bbT} $, there exists a sequence of random variables $\set{\epsilon_i(n,\vt)}_{n\geq 1}$ for $i=1,2$ such that
    \begin{align}
        \pr(c_{n+1}(\vt) &= c_n(\vt)-1\mid \cF_n) = \frac{c_n(\vt)}{n} \vW(\vt) \Phi_n + \epsilon_1(n,\vt)\label{eqn:tree-minus}\\
        \pr(c_{n+1}(\vt) &= c_n(\vt)+1\mid \cF_n) = \frac{1}{n} \sum_{u\in \cY_{\vt}} c_n(\vt^{\sss(u)}) f(d_u-1) \Phi_n + \epsilon_2(n,\vt)\label{eqn:tree-plus}
    \end{align}with $\epsilon_i(n,\vt) \to 0$ for $i=1,2$ almost surely and in $\cL^1$.
\end{lemma}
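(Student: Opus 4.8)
To prove Lemma~\ref{lem:approx}, the plan is to track exactly how the empirical fringe count $c_n(\vt)$ is modified when $v_{n+1}$ attaches. If $v_{n+1}$ attaches to $u^\star\in\cT(n)$, the only fringes that change are $f_0(v_{n+1},\cdot)$ (a fresh single-vertex tree) and $f_0(w,\cdot)$ for $w=u^\star$ and its strict ancestors; along this ancestral path the sizes $|f_0(w,\cT(n))|$ are strictly increasing toward the root. Hence: (i) at most one such $w$ has $f_0(w,\cT(n))\simeq\vt$, and its $\vt$-fringe is destroyed precisely when $u^\star$ lies in the subtree rooted at this $w$; (ii) at most one such $w$ has $|f_0(w,\cT(n))|=|\vt|-1$ and has its fringe completed to a copy of $\vt$ by the new leaf (a ``creation''), which happens exactly when $u^\star$ is the vertex of that subtree sitting at a position $u\in\cY_{\vt}$; (iii) if $\vt$ is the single-vertex tree there is the deterministic extra creation of $f_0(v_{n+1},\cdot)$. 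A further short check using the same monotonicity shows a creation and a destruction can coincide only when $\vt$ is a single edge; in all cases $\E(c_{n+1}(\vt)-c_n(\vt)\mid\cF_n)$ equals the conditional probability of a creation minus that of a destruction, and the two displayed identities compute these probabilities (with the obvious $+1$ when $\vt$ is a single vertex, paralleling the $+1$ in~\eqref{eqn:deg-approx1}).

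Given this, I would simply sum the exact one-step attachment law~\eqref{eqn:prob-decomp}. For destructions, $\pr(c_{n+1}(\vt)=c_n(\vt)-1\mid\cF_n)=\sum_{w:\,f_0(w,\cT(n))\simeq\vt}\,\sum_{i}\pr(v_{n+1}\to i\mid\cF_n)$, the inner sum running over the $|\vt|$ vertices $i$ of the subtree rooted at $w$. The leading parts of~\eqref{eqn:prob-decomp} contribute, for each $w$, $\sum_i f(\deg(i,n))/\Psi(n)=\vW(\vt)/\Psi(n)$ (the combinatorial identity that the total attachment weight of a fringe isomorphic to $\vt$ equals $\vW(\vt)$), so summing over the $c_n(\vt)$ roots $w$ gives $c_n(\vt)\vW(\vt)/\Psi(n)=c_n(\vt)\vW(\vt)\Phi_n/n$, with remainder $\epsilon_1(n,\vt):=-\sum_{w}\sum_{i}\sum_{j=1}^{4}w_j(i,n)$. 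For creations the analogous sum ranges over pairs $(u,w)$ with $u\in\cY_{\vt}$ and $f_0(w,\cT(n))\simeq\vt^{\sss(u)}$, evaluated at the unique vertex of that subtree occupying position $u$, which carries attachment weight $f(d_u-1)$ (this is the quantity $\vw_{\cic}$ appearing in~\eqref{eqn:weight-order-def}); the leading terms sum to $\tfrac{\Phi_n}{n}\sum_{u\in\cY_{\vt}}c_n(\vt^{\sss(u)})f(d_u-1)$, with remainder $\epsilon_2(n,\vt)$ of the same form.

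It remains to show $\epsilon_1(n,\vt),\epsilon_2(n,\vt)\to0$ a.s.\ and in $\cL^1$. The key point is that, once more by strict monotonicity of fringe sizes along a path, every vertex of $\cT(n)$ lies in the subtree of at most one $\vt$-fringe root and is the completion site of at most one admissible pair $(u,w)$; hence $|\epsilon_i(n,\vt)|\le\sum_{i=1}^{n}\sum_{j=1}^{4}|w_j(i,n)|$, a bound uniform in $\vt$. One then re-runs the estimates in the proof of Lemma~\ref{lem: deg-approx} without the indicator $\ind\{\deg(i,n)=1\}$: using Assumption~\ref{ass:attach-func}(ii) together with the Lipschitz hypothesis (or, in the sublinear non-Lipschitz cases, the monotonicity hypothesis, as in the remark after Lemma~\ref{lem: deg-approx}) exactly as in~\eqref{eqn:denom-lipschitz}, the total attachment weight $\sum_{i>\fplus{n-n^{\gb}\xi}}f(\deg(i,n))$ of the vertices that arrived after the delayed snapshot is $O(n^{\gb}\xi)$, so each $\sum_{i=1}^n|w_j(i,n)|$ is dominated by a deterministic multiple of $\E\big[n^{\gb-1}\xi\,\ind\{n-n^{\gb}\xi\ge1\}\big]+\E\big[\tfrac{n^{\gb}\xi\,\ind\{n-n^{\gb}\xi\ge1\}}{\fplus{n-n^{\gb}\xi}}\big]+\pr(\xi>n^{1-\gb})$, which tends to $0$ by Assumption~\ref{ass:lipschitz}\eqref{eqn:815} and $\pr(\xi<\infty)=1$. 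Since the dominating sequence is deterministic, $\epsilon_i(n,\vt)\to0$ both almost surely and in $\cL^1$.

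The main obstacle is the bookkeeping in the first paragraph: establishing the ``at most one'' and disjointness statements so that the conditional drift is exactly creation-probability minus destruction-probability, and correctly isolating the genuinely separate contributions of the degenerate trees ($\vt$ a single vertex or a single edge). Once the combinatorial picture and the $\vt$-uniform bound $|\epsilon_i(n,\vt)|\le\sum_i\sum_j|w_j(i,n)|$ are secured, the remaining analysis is a direct extension of the argument already carried out for Lemma~\ref{lem: deg-approx}.
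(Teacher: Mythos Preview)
Your approach is essentially the same as the paper's: both identify the ``destruction'' and ``creation'' index sets, plug in the decomposition~\eqref{eqn:prob-decomp}, and bound the resulting $w_j$-sums. The substantive difference is in how the remainder is controlled. The paper does not pass to the crude $\vt$-uniform bound $|\epsilon_i(n,\vt)|\le\sum_{i=1}^{n}\sum_{j}|w_j(i,n)|$. Instead it keeps the indicator $\ind\{i\in V_n\}$ (with $V_n$ the set of vertices lying inside some $\vt$-fringe) and uses the simple but key observation that any such $i$ has $\deg(i,n)\le D_{\vt}$, so $f(\deg(i,n))\le M_{\vt}:=\sup_{j\le D_{\vt}}f(j)$. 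With this bound in hand, the role of $f(1)$ in the estimates of Lemma~\ref{lem: deg-approx} is taken over by the constant $M_{\vt}$, and the four $R_j(n,\vt)=\sum_{i\in V_n}w_j(i,n)$ are bounded exactly as the $W_j(n)$ were, with no new work. This avoids having to control $\sum_{i>\fplus{n-n^{\gb}\xi}}f(\deg(i,n))$ or $\frac{\Psi(n)-\Psi(m)}{\Psi(m)}$ globally.

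Your route does go through, but it is heavier than you suggest. The claim that ``re-running the estimates without the indicator $\ind\{\deg(i,n)=1\}$'' suffices is not quite right: those estimates used $f(\deg(i,n))=f(1)$ in an essential way. To make your uniform bound work you really do need the separate input $\sum_{i>m}f(\deg(i,n))=O(n-m)$ (which follows from $f(k)\le C'k$ and $\sum_{i>m}\deg(i,n)\le 2(n-m)+1$), and for $w_2$ you need $\frac{|\Psi(n)-\Psi(m)|}{\Psi(m)}\to 0$; in the non-Lipschitz non-decreasing case the Remark you cite only gives the trivial bound $1$ for $\frac{|\Psi(n)-\Psi(m)|}{\Psi(n)}$, which is not enough when summing over all $i$. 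None of this is fatal, but the paper's $M_{\vt}$ shortcut sidesteps all of it. The trade-off is that your bound, once justified, is uniform in $\vt$, whereas the paper's constants depend on $\vt$ through $M_{\vt}$; for the present purpose (fixed $\vt$) this extra uniformity buys nothing.
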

\begin{proof}
We first prove~\eqref{eqn:tree-minus}, when $f$ satisfies the assumptions \ref{ass:lipschitz} and $f$ is Lipschitz with Lipschitz constant $L \geq 0$.
    Let $V_n = \{i \in \cT_n : i \in f_0(v,\cT_n) = \vt \text { for some } v \in \cT_n \}$. Thus, we have
    \begin{align*}
        \pr(c_{n+1}(\vt) = c_n(\vt)-1\mid \cF_n) 
        &= \sum_{i=1}^n \ind\set{i \in V_n} \pr(\text{vertex }n+1 \text{ attaches to vertex } i \mid\cF_n).
    \end{align*}
\noindent Hence, by~\eqref{eqn:prob-decomp}, we have \begin{align*}
    \pr(c_{n+1}(\vt) = c_n(\vt)-1\mid \cF_n) &=  \sum_{i=1}^n \ind\set{i \in V_n}\frac{f(\deg(i,n))}{\Psi(n)} + \epsilon_1(n) = \frac{c_n(\vt)}{n}\vW(t)\Phi_n + \epsilon_1(n,\vt)
\end{align*}where $\epsilon_1(n,\vt) = -\left[\sum_{j=1}^4 R_j(n,\vt)\right]$ and $R_j(n,\vt) = \sum_{i=1}^n 
\ind\set{i\in V_n} w_j(i,n)$ for $1\leq j\leq 4$.

\noindent Let $D_{\vt}$ be the maximal degree of a vertex in $\vt$ and $M_\vt = \sup\limits_{j \leq D_\vt} f(j)$. We have 
      \begin{align*}
        |R_1(n,\vt)| &= \sum_{i =1}^n\ind\set{i \in V_n} |w_1(i,n)| = \sum_{i=1}^n\ind\set{i \in V_n}\frac{f(\deg(i,n))}{\Psi(n)}\pr(n-n^{\gb}\xi < i)\\
        &\leq \frac{M_\vt}{f_*n} \left(\sum_{i=1}^n\pr(n-n^{\gb}\xi < i)\right) \leq \frac{M_\vt}{f_*} \E\left(n^{\beta-1}\xi \cdot \ind\set{n-n^{\gb}\xi \geq 1}\right).
      \end{align*}
By DCT, we have $R_1(n,\vt)\to 0$. Also,
    \begin{align*}
    |R_2(n,\vt)| \leq \sum_{i =1}^n\ind\set{i \in V_n} |w_2(i,n)|
       &\leq \frac{2LM_\vt}{f_*^2}\E\left(\frac{n^{2\gb-1}\xi^2\cdot\ind\set{n-n^{\gb}\xi \geq 1}}{\lfloor n-n^{\gb}\xi \rfloor } \right)\\
        &\leq \frac{2LM_\vt}{f_*^2}\E\left(\frac{n^{\beta}\xi \cdot \ind\set{n-n^{\gb}\xi \geq 1}}{\lfloor n-n^{\gb}\xi \rfloor } \right)
    \end{align*}
and 
    \begin{align*}
        |R_3(n,\vt)| =  \sum_{i =1}^n\ind\set{i \in V_n} w_3(i,n)
        &\leq \frac{M_\vt}{2f_*}\E\left(\frac{n^{\alpha}\xi\cdot \ \ind\set{n-n^{\gb}\xi \geq 1}}{\lfloor n-n^{\gb}\xi \rfloor } \right).
    \end{align*}

Therefore, under the assumptions~\ref{ass:lipschitz}, we have $\lim\limits_{n \to \infty}R_j(n,\vt) \to 0$ for $j=2,3$. As observed earlier, we have $|R_4(n,\vt)| \leq \pr(\xi \geq n^{1-\beta}) \to 0$. Hence, $\epsilon_1(n,\vt) \to 0$ almost surely and $\cL^1$.

\begin{rem}
Note that in the argument above, the Lipschitz property of $f$ is used only to bound the term $R_2(n,\vt)$. Following the argument used in the proof of \ref{lem: deg-approx}, when $f$ is sublinear and non-decreasing, as $n\to \infty$, we have$$\left|R_2(n,\vt) \right| \leq \frac{M_{\vt}}{f_*} \E\left(\frac{n^\beta \xi \cdot \ind\set{n-n^\beta\xi \geq 1}}{\lfloor n-n^\beta\xi \rfloor}\right) \to  0.$$
\end{rem}

\noindent To prove~\eqref{eqn:tree-plus}, define $U_n = \set{v \in \cT_n: \exists u \text{ such that }v \in f(u,\cT_n) = \vt \text{ and }  v\in \cY_{f(u,\cT_n)} }$. We have,
  \begin{align*}
    \pr(c_{n+1}(\vt) = c_n(\vt)-1\mid \cF_n) &= \sum_{v \in \cT_n}\ind\set{v \in U_n} \pr(n+1 \to v|\cF_n).
  \end{align*}
   Again, using~\eqref{eqn:prob-decomp} and similar arguments as above, we get~\eqref{eqn:tree-plus}.
\end{proof}
  For the rest of the argument, $\vt$ will be a fixed element of $\overrightarrow{\bbT}$ and let $\fN_{n}(\vt) = \E(c_n(\vt))$. Using the dynamics of the process $\set{\cT_n:n\geq 2}$, the resultant dynamics for the above counts (for fixed $\vt$) are via Lemma~\ref{lem:approx}, we have \begin{align}\label{eqn:evol-c_n}
	\E(c_{n+1}(\vt)\mid \cF_n) - c_n(\vt) = \frac{1}{n}\bigg[\sum_{u\in \vt} \Phi_n c_n(\vt^{\sss(u)}) f(d_u -1)I_u  - \vW(\vt)\Phi_n c_n(\vt)\bigg] + \epsilon(n)
\end{align}where $\epsilon_n(\vt) = \frac{1}{n+1} \left[\epsilon_1(n,\vt) - \epsilon_2(n,\vt)\right]$ which converges to zero by Lemma~\ref{lem:approx}. Taking expectations we get \begin{align}\label{eqn:exp-recursion-tree}
    \fN_{n+1}(\vt) = -\frac{\vW(\vt)}{\lambda^*} \frac{\fN_n(\vt)}{n} +  \frac{1}{\lambda^*}\left(\sum_{u\in \vt} \frac{\fN_n(\vt^{\sss(u)})}{n}f(d_u -1)I_u\right) + \hat{\epsilon}_n(\vt)
\end{align}where \begin{align*}
    \hat{\epsilon}_n(\vt) = \bigg[\sum_{u\in \vt} \E\left(\left[\Phi_n-\frac{1}{\lambda^*}\right] \fP_n(\vt^{\sss(u)})\right) f(d_u -1)I_u  - \vW(\vt) \E\left( \left[\Phi_n-\frac{1}{\lambda^*}\right]\fP_n(\vt)\right)\bigg] + \epsilon_n(\vt)
\end{align*} Under the assumptions~\ref{ass:attach-func}, we have $f_* = \inf_{k\geq 1 }f(k) > 0$ and $\Psi(n) = \sum\limits_{k=1}^\infty Z_k(n)f(k) \geq nf_* $. Hence we have $\Phi_n \leq (f_*)^{-1}$ for all $n$. Using DCT, we have $\hat{\epsilon}_n(\vt) \to 0$ almost surely and in $\cL^1$.

Note that in~\eqref{eqn:exp-recursion-tree}, the second term on the right-hand side involves the expectation of empirical proportions of trees, which are strict sub-trees of $\vt$. This suggests that if expected proportions of these trees converge, then under regularity conditions, so should $\E(\fP_n(\vt))$, where the limit satisfies a recursive equation inherited from~\eqref{eqn:exp-recursion-tree}. 

Now by Corollary~\ref{cor:meso-deg},  if $\vt$ is a tree consisting of a single vertex $\vt = \set{v}$ (namely, the corresponding fringe density is just the leaf density), then 
\begin{align}
\label{eqn:141}
	\E(\fP_n(\vt)) = \frac{\fN_n(\vt)}{n} \to \varpi(\vt):= \frac{\gl^*}{\gl^*+f(1)} = \fp(\vt).
\end{align}
Fix $l\geq 2$ and assume that for 
\begin{align}
\label{eqn:hypo-vt}
	\forall~\tilde{\vt} \in \overrightarrow{\bbT}  \mbox{  with } |\tilde{\vt}| \leq (l-1), \qquad \E(\fP_n(\tilde{\vt})) = \frac{\fN_n(\tilde{\vt})}{n} \to \varpi(\tilde{\vt}), \qquad \mbox{ as } n\to \infty, 
\end{align}
for some positive limit constant $\varpi(\tilde{\vt})$ which satisfies the boundary condition in~\eqref{eqn:141}. Let $\vt \in\overrightarrow{\bbT} $ with $|\vt| = l$ and consider the evolution equation in~\eqref{eqn:exp-recursion-tree}. By~\cite{durrett-rg-book}*{Lemma 4.1.2}, we have \begin{align*}
    \lim\limits_{n\to \infty}\frac{\fN_n(\vt)}{n} = \frac{\left(\sum\limits_{u \in \vt}\fpm(\vt^{\sss(u)})f(d_u -1)I_u\right)}{\lambda^*+\vW(\vt) } 
\end{align*} satisfies the recursion in~\eqref{eqn:recur-first-part}. Therefore by Proposition~\ref{prop:prop-of-fp}, $\E(\fP_n(\vt)) \to \fp(\vt)$ for all $\vt \in \overrightarrow{\bT}$. This completes the proof of expected fringe convergence, and thus Proposition~\ref{prop:exp-fring}.
\qed

\subsection{Second order fluctuations in the linear regime, Proof of Theorem~\ref{thm:meso-degree-CLT}:}
\label{sec:second-order}
We largely follow~\cite{resnick2015asymptotic} where the proof involves a careful evaluation of the dynamics of the number of leaves, coupled with the Lindeberg-Feller Martingale CLT stated in~\cites{resnick2015asymptotic}. We sketch the main changes in the proof of~\cite{resnick2015asymptotic}.   
\begin{lemma}[Martingale CLT]
\label{lem:mg-clt}
Let $\set{X_{n,m}, \cF_{n,m}}_{1\leq m\leq n}$ be a square-integrable martingale array such that \begin{enumeratei}
    \item $\lim\limits_{n\to \infty}\sum\limits_{m=1}^n \E(X_{n,m}^2\mid \cF_{n,m-1}) \probc \sigma^2$. 
    \item $\lim\limits_{n\to \infty}\sum\limits_{m=1}^n \E(X_{n,m}^2\ind\set{|X_{n,m}| \geq \epsilon}\mid \cF_{n,m-1}) \probc 0$ for all $\epsilon > 0$.
\end{enumeratei} 
Then 
$
    \sum_{m=1}^n X_{n,m} \convd \N(0,\sigma^2)
$ as $n\to \infty$. 
\end{lemma}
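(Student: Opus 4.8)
The plan is to treat this as the classical Lindeberg--Feller central limit theorem for martingale difference arrays and prove it by the characteristic function (McLeish) method; since the statement is standard one could alternatively just cite Hall and Heyde or~\cite{resnick2015asymptotic}, but here is the route I would take. Write $S_n := \sum_{m=1}^n X_{n,m}$. By L\'evy's continuity theorem it suffices to show $\E[e^{itS_n}] \to e^{-t^2\sigma^2/2}$ for each fixed $t\in\bR$. The first step is a truncation: fix $\delta>0$ and replace each $X_{n,m}$ by its recentered truncation $\wt X_{n,m} := X_{n,m}\ind\set{|X_{n,m}|\le\delta} - \E\big(X_{n,m}\ind\set{|X_{n,m}|\le\delta}\mid\cF_{n,m-1}\big)$, which is again a martingale difference array with $|\wt X_{n,m}|\le 2\delta$. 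Using hypothesis (ii) I would check that $\sum_m|X_{n,m}-\wt X_{n,m}|\probc 0$ and that the conditional variances of $\wt X_{n,m}$ still converge to $\sigma^2$ (after sending $n\to\infty$ and then $\delta\downarrow 0$), so that it is enough to prove the result for a uniformly bounded martingale difference array; rename it $X_{n,m}$, with $|X_{n,m}|\le 2\delta$.

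For the bounded array I would use the elementary expansion $e^{ix}=(1+ix)\exp(-x^2/2+r(x))$ with $|r(x)|\le C|x|^3$ for $|x|$ small, to factor $e^{itS_n}=T_n\,e^{G_n}$ where $T_n:=\prod_{m=1}^n(1+itX_{n,m})$ and $G_n:=-\tfrac{t^2}{2}\sum_m X_{n,m}^2+\sum_m r(tX_{n,m})$. Two facts drive the argument. First, $G_n\probc-t^2\sigma^2/2$: the cubic remainder is bounded by $C|t|^3(\max_m|X_{n,m}|)\sum_m X_{n,m}^2\le 2C\delta|t|^3\sum_m X_{n,m}^2$, and $\sum_m X_{n,m}^2\probc\sigma^2$ follows from hypothesis (i) together with the observation that $\sum_m(X_{n,m}^2-\E(X_{n,m}^2\mid\cF_{n,m-1}))$ is a martingale whose $L^1$ norm vanishes by a Doob-type estimate using the uniform bound. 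Second, $\E[T_n]=1$ for all $n$: the partial products are bounded, so iterated conditioning applies and $\E(X_{n,m}\mid\cF_{n,m-1})=0$ collapses each factor; moreover $|T_n|^2=\prod_m(1+t^2X_{n,m}^2)\le\exp(t^2\sum_m X_{n,m}^2)$, so $\{T_n\}_n$ is bounded in probability and in fact uniformly integrable.

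To finish I would write $e^{itS_n}-e^{-t^2\sigma^2/2}=T_n\big(e^{G_n}-e^{-t^2\sigma^2/2}\big)+e^{-t^2\sigma^2/2}(T_n-1)$. The second term has zero expectation since $\E[T_n]=1$; in the first term $e^{G_n}-e^{-t^2\sigma^2/2}\probc 0$ and is bounded, while $T_n$ is uniformly integrable, so the product converges to $0$ in $L^1$. Hence $\E[e^{itS_n}]\to e^{-t^2\sigma^2/2}$ and the lemma follows. The main obstacle I anticipate is the truncation/recentering bookkeeping: one must keep the martingale-difference property while simultaneously controlling the perturbation to both the Lindeberg sum in (ii) and the conditional variances in (i), and then upgrade the in-probability convergences of $G_n$ and of $\sum_m X_{n,m}^2$ to statements about expectations via uniform integrability. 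All of this becomes routine once the uniform bound $|X_{n,m}|\le 2\delta$ is in hand, which is exactly what the truncation buys.
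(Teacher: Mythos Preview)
The paper does not actually prove this lemma: it is stated as a black-box tool (the Lindeberg--Feller martingale CLT) with an implicit pointer to~\cite{resnick2015asymptotic}, and the text immediately moves on to applying it. So there is nothing in the paper to compare your argument against; your proposal supplies what the paper deliberately omits.

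Your sketch follows the standard McLeish characteristic-function route and is essentially correct. One point deserves more care: from $|T_n|^2\le\exp\bigl(t^2\sum_m X_{n,m}^2\bigr)$ and $\sum_m X_{n,m}^2\probc\sigma^2$ you only get that $\{T_n\}$ is tight, not that it is uniformly integrable, and tightness alone is not enough to pass to the limit in $\E\bigl[T_n(e^{G_n}-e^{-t^2\sigma^2/2})\bigr]$. The usual repair is a localization: replace $T_n$ by the product stopped at the first $m$ for which $\sum_{j\le m}X_{n,j}^2$ exceeds a fixed large level $K$. The stopped product is deterministically bounded (hence UI) and still a martingale with mean one, and by hypothesis (i) the stopping is eventually inactive with probability tending to one, so the stopped and unstopped versions agree with high probability. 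With that patch---or, as you yourself note, by simply citing Hall--Heyde or~\cite{resnick2015asymptotic}---the argument goes through.
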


We now provide a proof of Theorem \ref{thm:meso-degree-CLT}. 
\begin{proof}[Proof of Theorem \ref{thm:meso-degree-CLT}]

Now recall $Z_1(n) := Z_n$ the number of degree $1$ vertices in $\cT(n)$. We consider the case of affine attachment function $f(k) = k+\alpha, \quad k\geq 1$. From Cor.~\ref{cor:meso-deg}, we have 
\[Z_n \convas \frac{2+\alpha}{3+2\alpha}:= p_1,\] 
when~\eqref{eqn:815} holds. Let $\gamma := {(1+\alpha)}/{(2+\alpha)}$. For any $n \geq 1$, we have \begin{align*}
    \E(Z_{n+1}\mid \cF_n) &=  Z_n + 1- \frac{1+\alpha}{2+\alpha} \sum_{i=1}^n \ind\set{\deg(i,n) =1} \E\left(\frac{\ind\set{n-n^\beta\xi \geq i}}{\lfloor n-n^\beta\xi\rfloor}\right)\\
    &\hspace{2.5cm}- \ind\set{\deg(1,n)=1}\pr(n-n^\beta \xi < 1)\\
    &= Z_n + 1- \frac{1+\alpha}{2+\alpha} \sum_{i=1}^n \ind\set{\deg(i,n) =1} \E\left(\frac{\ind\set{n-n^\beta\xi \geq 1}}{\lfloor n-n^\beta\xi\rfloor}\right) \\
    &\hspace{2cm}+ \frac{1+\alpha}{2+\alpha} \sum_{i=1}^n \ind\set{\deg(i,n) =1} \E\left(\frac{ \ind\set{1 \leq n-n^\beta\xi < i}}{\lfloor n-n^\beta\xi\rfloor}\right)\\
    &\hspace{2.5cm}- \ind\set{\deg(1,n)=1}\pr(n-n^\beta \xi < 1)\\
    &= k_n Z_n + A_n,
\end{align*}
where \begin{align*}
    k_n &= 1-\gamma\E\left(\frac{\ind\set{n-n^\beta\xi \geq 1}}{\lfloor n-n^\beta\xi\rfloor}\right) \text{ and}\\ 
    A_n &= 1 + \gamma \ \sum_{i=1}^n \ind\set{\deg(i,n) = 1} \E\left(\frac{\ind\set{1\leq n-n^\beta \xi <i }}{\lfloor n-n^\beta\xi\rfloor}\right)\\
    &\qquad\qquad\qquad\qquad- \ind\set{\deg(1,n)=1}\pr(n-n^\beta \xi < 1).
\end{align*} 
We then have $M_n$ is a $\cF_n$ adapted martingale where \begin{align*}
    M_n = \prod_{j=1}^{n-1}k_j^{-1}\cdot Z_n - \sum\limits_{i=1}^{n-1} \left( \prod_{j=1}^{i-1}k_j^{-1}\cdot A_i\right)
\end{align*}
Let $d_{n} = M_n - M_{n-1}$ be the martingale difference. We have \begin{align*}
    d^2_{n+1} = \prod\limits_{j=1}^n k_j^{-2}\cdot \biggl[(Z_{n+1} - Z_n) + 2(Z_{n+1}-Z_n)\left[(1-k_n)Z_n - k_n A_n\right] + \left[\right(1-k_n)Z_n - k_nA_n]^2\biggr].
\end{align*}
Hence, \begin{align*}
    \E(d^2_{n+1}\mid\cF_n) &= \prod\limits_{j=1}^n k_j^{-2} \cdot \biggl[\beta_1(n) + \beta_2(n)\biggr],
\end{align*}
where, 
    $\beta_1(n) = \E(Z_{n+1}-Z_n\mid\cF_n) \left(1+2(1-k_n)Z_n -2k_nA_n\right)$ and 
    $\beta_2(n) = \left(Z_n(1-k_n) - k_nA_n\right)^2.$
Under the assumptions~\ref{ass:lipschitz} and~\eqref{eqn:1257}, we have \begin{align*}
     \lim\limits_{n \to \infty}|A_n-1| &\leq \lim\limits_{n\to \infty}\left[ \gamma \E\left(\frac{n^\gb\xi
     \cdot\ind\set{n-n^\gb\xi \geq 1}}{\lfloor n-n^\beta\xi\rfloor}\right) + \pr(n-n^\beta\xi < 1)\right] = 0\\   
    &\lim\limits_{n\to \infty} n(1-k_n) =\gamma,
    \quad \lim\limits_{n \to \infty} n^{\gamma}\prod\limits_{j=1}^n k_j = C,
\end{align*}
for some $C > 0$ is a constant. Hence, we have $\E(d^2_{n+1}\mid\cF_n) \sim \frac{(1+\alpha)(2+\alpha)}{C^2(3+2\alpha)^2} n^{2\gamma}$ and  \begin{align}\label{eqn:cond1}
    \sum\limits_{i=1}^n \E(d_{i+1}^2\mid \cF_n) \sim  \frac{(1+\alpha)(2+\alpha)}{C^2 (3+2\alpha)^2 (2\gamma+1)} n^{2\gamma+1} = \sigma^2 n^{2\gamma+1} 
\end{align} 
where the asymptotic equivalence holds almost surely. With the above decompositions, using Lemma~\ref{lem:mg-clt} and following the same argument as~\cite{resnick2015asymptotic} now shows that

\begin{align}\label{eqn:clt-mg-term}
    \frac{M_n}{n^{\gamma+1/2}} \convd \N(0,\sigma^2).
\end{align}
We have\begin{align*}
    \frac{M_n}{n^{\gamma+1/2}} &= \frac{\sqrt{n}}{n^\gamma  \prod_{j=1}^{n-1}k_j} \left(\frac{Z_n}{n} - \frac{\fX_n}{n}\right)
\end{align*}
where $\fX_n = \sum\limits_{i=1}^{n-1} \left(\prod\limits_{j=i}^{n-1} k_j\right)A_j$ is the random centering in Theorem~\ref{thm:meso-degree-CLT}. Note that $\fX_n$ satisfies the recursion \begin{align}\label{eqn:recursion-X}
    \fX_{n+1} &= k_n \fX_n + A_n = \left(1-\gamma \E\left(\frac{\ind\set{n-n^\beta \xi \geq 1}}{\lfloor n-n^\beta \xi \rfloor}\right)\right) \fX_n + A_n.
\end{align} 
Since, $|A_n-1|\leq \gamma \E\left(\frac{n^\beta \xi\cdot \ind\set{n-n^\beta\xi \geq 1}}{\lfloor n-n^\beta\xi \rfloor}\right) \to 0$ under Assumption~\ref{ass:lipschitz}. Therefore, by~\cite{durrett-rg-book}*{Lemma 4.1.2} we have \begin{align*}
    \frac{\fX_n}{n} \convas \frac{1}{\gamma+1} = p_1.
\end{align*} 
Moreover, $n^\gamma \prod\limits_{j=1}^{n-1}k_j \to C$ implies that
\begin{align*}
    \sqrt{n}\left(\frac{Z_n - \fX_n}{n}\right) \convd \N(0,\sigma_1^2)
\end{align*}
where $\sigma_1^2 = \sigma^2 C^2 =\frac{(1+\alpha)(2+\alpha)^2}{(3+2\alpha)^2(4+3\alpha)}.$ This proves the first part of the Theorem. 

We now prove the final assertion of the theorem, namely showing under the sufficient condition mentioned in Theorem~\ref{thm:meso-degree-CLT}, $\epsilon_n/n\to 0$ where $\epsilon_n = \sqrt{n} (\fX_n
- np_1)$. Using~\eqref{eqn:recursion-X} and $p_1 = 1-\gamma p_1$, we have \begin{align*}
    \epsilon_{n+1} = \left[1-\gamma \E\left(\frac{\ind\set{n-n^\beta\xi \geq 1}}{\lfloor n-n^\beta \xi \rfloor}\right)\right]\epsilon_n +& \gamma \sqrt{n} \left[1-\E\left(\frac{n\cdot\ind\set{n-n^\beta \xi \geq 1}}{\lfloor n-n^\beta \xi \rfloor}\right)\right]p_1 +\sqrt{n} (A_n -1)
\end{align*} and \begin{align*}
   \left| \left[1-\E\left(\frac{n\cdot\ind\set{n-n^\beta \xi \geq 1}}{\lfloor n-n^\beta \xi \rfloor}\right)\right]p_1 + (A_n -1)\right| \leq (1+\gamma)&\E\left(\frac{n^\beta\xi \cdot \ind\set{n-n^\beta\xi \geq 1}}{\lfloor n-n^\beta \xi \rfloor}\right)\\
   &+ \ind\set{\deg(1,n)=1} \pr(n-n^\beta\xi < 1).
\end{align*} Since, $$\lim_{n\to \infty}\pr(\deg(1,n) = 1) = \prod\limits_{j=1}^\infty\left[1- \E\left(\frac{\ind\set{n-n^\beta\xi \geq 1}}{\lfloor n-n^\beta \xi \rfloor}\right)\right] = 0,$$ it follows that $\sqrt{n}\pr(n-n^\beta\xi)\ind\set{\deg(1,n) = 1} \convas 0$. Therefore, if $ \lim\limits_{n\to \infty}\sqrt{n} \E\left(\frac{n^\beta\xi\cdot\ind\set{n-n^\beta\xi \geq 1}}{\lfloor n-n^\beta \xi \rfloor}\right) \to 0$, using~\cite{durrett-rg-book}*{Lemma 4.1.2}, we have \begin{align*}
    \frac{\epsilon_n}{n} = \sqrt{n} \left(\frac{\fX_n}{n} -p_1\right) \probc 0
\end{align*} and hence by~\eqref{eqn:clt-mg-term}, we have \begin{align*}
    \frac{Z_n - np_1}{\sqrt{n}} \convd \N(0,\sigma_1^2).
\end{align*}This finishes the proof.     
\end{proof}

\subsection{Root degree asymptotics}
\label{sec:root-deg}
Here, we will prove Theorem~\ref{thm:meso-max-degree} on the scaling of the root degree $M(\rho,n)$ in the mesoscopic regime. In the following, $C, C', C_1, C_2,\ldots$ will denote generic positive finite constants whose values might change between lines.

We add a self-loop to the root vertex at time $1$ so that the sum of degrees at time $n$ is always $2n$. We break the root vertex into two vertices: $0'$ and $1'$. Vertex $0'$ has degree $0$ at time $1$, and vertex $1'$ has a self-loop with degree $2$ at time $1$. At any time $(n+1)$, vertex $0'$ will get an edge from vertex $(n+1)$ if $\fplus{n-n^{\gb}\xi_{n+1}}<1$; whereas when $\fplus{n-n^{\gb}\xi_{n+1}}\ge1$, vertex $0'$ will be connected with probability $\frac{\deg(0',\fplus{n-n^{\gb}\xi_{n+1}})}{(2+\ga)\fplus{n-n^{\gb}\xi_{n+1}}}$ and vertex $1'$ will be connected with probability $\frac{\deg(1',\fplus{n-n^{\gb}\xi_{n+1}})+\ga}{(2+\ga)\fplus{n-n^{\gb}\xi_{n+1}}}$. Note that, 
\[
M(\rho,n) = \deg(0',n)+\deg(1',n)
\]
for all $n\ge 1$. One can think of the edges to vertex $0'$ as all the edges created by a ``large'' delay and modified by the linear preferential attachment mechanism. We define $$M_n:=\deg(0',n),\ T_n:=\deg(1',n)+\ga\text{ and  }\theta=1/(2+\ga).$$ Note that both $M_n$ and $T_{n}$ are increasing with $n$. The heuristic is that $T_n$ grows at the order of $n^\theta$, irrespective of the delay when $\E[X\log_+X]<\infty$. However, the growth of $M_n$ is given by $n^\theta$ when $\E[X^{1-\theta}]<\infty$ and by $n^\theta \E[(X\wedge n)^{1-\theta}]$ otherwise. \\

\noindent (i)~Note that the transition from time $n$ to $(n+1)$ is  $M_n \leadsto M_n +1$ with conditional probability (on the natural filtration $\set{\cF_n:n\geq 1}$),
\begin{align}\label{smart}
     &\E_{\xi} \bigg( 
    \frac{\theta  M_{\fplus{n-n^\gb\xi}} }{ \fplus{n-n^\gb\xi}}
    \cdot \ind\set{n-n^\gb\xi\ge1} 
    \bigg) 
    + \pr(\xi>(n-1)/n^\gb)\notag\\
   & = \frac{\theta}{n}\cdot M_n\cdot  (1+\eps_n) - \theta  \E_{\xi} \bigg( 
    \frac{M_n-M_{\fplus{n-n^\gb\xi}} }{ \fplus{n-n^\gb\xi}}
    \cdot \ind\set{n^\gb\xi\le n-1} 
    \bigg) + \gamma_n,
\end{align} 
where
\[
\eps_n = \E_{\xi}\bigg( \frac{n - \fplus{n-n^\gb\xi} }{\fplus{n-n^\gb\xi}}\cdot \ind\set{n-n^{\gb}\xi \ge 1}\bigg), \text{ and } \gamma_n = \pr(\xi>n^{1-\gb}(1-1/n)) .
\]
 Thus, letting $\Theta_n = \prod_{j=1}^{n-1} \left(1+\theta(1+\eps_i)/i\right)$ (with $\Theta_1 \equiv 1$), it is easy to check that the sequence, 
\begin{align}\label{eq:supmart}
\fR_n = \frac{M_n}{\Theta_n} - \sum_{j=1}^{n-1} \frac{\gamma_j}{\Theta_{j+1}}, \qquad  n\geq 1,
\end{align}
is a supermartingale.  Similarly, the transition from time $n$ to $(n+1)$ is  $T_n \leadsto T_n +1$ with conditional probability (on the natural filtration $\set{\cF_n:n\geq 1}$),
\begin{align}\label{smart2}
     &\E_{\xi} \bigg( 
    \frac{\theta  T_{\fplus{n-n^\gb\xi}} }{ \fplus{n-n^\gb\xi}}
    \cdot \ind\set{n-n^\gb\xi\ge1} 
    \bigg)\notag\\
    &= \frac{\theta}{n}\cdot T_n\cdot  (1+\eps_n)- \theta  \E_{\xi} \bigg( 
    \frac{T_n-T_{\fplus{n-n^\gb\xi}} }{ \fplus{n-n^\gb\xi}}
    \cdot \ind\set{n^\gb\xi\le n-1} 
    \bigg),
\end{align}  
and we get that 
\begin{align}\label{eq:supmart2}
\fT_n = \frac{T_n}{\Theta_n}, \qquad  n\geq 1,
\end{align}
is a non-negative supermartingale and hence converges almost surely.

Under condition in~\eqref{eqn:1257}, it is easy to check that $\Theta_n \sim n^\theta$. Assuming~\eqref{eqn:815}, and with $X=\xi^{1/(1-\gb)}$, we have 
\begin{align}\label{eq:finsum}
\sum_{j=1}^{\infty} \frac{\gamma_j}{\Theta_{j+1}} \le \text{const.}\cdot \sum_{j=1}^{\infty} j^{-\theta} \pr(X\ge j)\le \text{const.}\cdot\E[X^{1-\theta}]<\infty,
\end{align}
and thus $\sup_{n\geq 1} \E(\fR_n^{-}) < \infty$. The supermartingale convergence theorem now gives almost sure convergence for $\fR_{n}$.  

To prove $L^2$ convergence, it is enough to show that $\{n^{-2\theta}\E M_n^2, n\ge 1\}$ and $\{n^{-2\theta}\E T_n^2, n\ge 1\}$ are uniformly bounded. We will prove the first; the other follows similarly. Observe that using~\eqref{smart}, we have
\begin{align*}
\E\left(M_{n+1}^2 \ \vert\ \cF_n\right) &\le M_n^2 + (2M_n + 1)\left[\frac{\theta(1+ \eps_n)}{n}M_n + \gamma_n\right]\\
&\le \left(1 + \frac{2\theta(1+ \eps_n)}{n}\right)M_n^2 + CM_n\left(\frac{1}{n} + \gamma_n\right), n\ge 1.
\end{align*}
Define 
\[
\Theta_1^{(2)} \equiv 1, \qquad \Theta_n^{(2)}:= \prod_{j=1}^{n-1} \left(1+\frac{2\theta(1+\eps_i)}{i}\right),\quad n>1.
\]
Clearly, $\Theta_n^{(2)} \sim \Theta_n^2\sim n^{2\theta}$ for $n$  large.
By the supermartingale analysis above, $\E M_n \le C \Theta_n$ for all $n \ge 1$.
Using induction, we have
\[
\frac{\E M_n^2}{\Theta_n^{(2)}} \le \E M_1^2 + C\cdot \sum_{i=1}^{n-1}\frac{\Theta_i\ }{\Theta_{i+1}^{(2)}}\left(\frac{1}{i} + \gamma_i\right)
\le C'\cdot \left(\sum_{i=1}^{\infty}\frac{1}{i^{1+\theta}} + \sum_{i=1}^{\infty}\frac{\gamma_i}{\Theta_{i+1}}\right)<\infty,
\]
by~\eqref{eq:finsum} and the fact that $\theta>0$. 

We have $\{n^{-\theta}(M_n+T_{n}), n \ge 1\}$ is uniformly integrable. 
To complete the proof of (i), it suffices to show that $\liminf_{n \to \infty} n^{-\theta}\E(T_n) >0$. 
From equation~\eqref{eq:supmart2}, we have
\begin{align}\label{eq:imp}
\E \frac{T_n}{\Theta_n} \le \E \frac{T_k}{\Theta_k},
\end{align}
for all $k\le n$. 
In particular, we have for all $n\ge 1$, with $a=2/(1-\gb)>2$,
\begin{align}
\begin{split}
    \E T_{n+1} 
&=\E T_{n}  + \theta  \E \bigg( 
    \frac{T_{\fplus{n-n^\gb\xi}} }{ \fplus{n-n^\gb\xi}}
    \cdot \ind\set{n-n^\gb\xi\ge1} 
    \bigg)\notag \\
    & \ge \E T_{n}  + \theta  \E \bigg( 
    \frac{T_{\fplus{n-n^\gb\xi}} }{ \fplus{n-n^\gb\xi}}
    \cdot \ind\set{n-n^\gb\xi\ge a} 
    \bigg)\notag \\
    & \ge \E T_{n} \left(1+\frac{\theta}{n}\cdot \E\left(\frac{\Theta_{\fplus{n-n^\gb\xi}}}{\Theta_{n}}\frac{n}{\fplus{n-n^\gb\xi}}\cdot \ind\set{n-n^\gb\xi\ge a} \right)\right)\notag\\
    & \ge \E T_{n} \left(1+\frac{\theta}{n}-\frac{\theta}{n}\cdot \E\left(\frac{\Theta_n-\Theta_{\fplus{n-n^\gb\xi}}}{\Theta_n}\cdot \ind\set{n-n^\gb\xi\ge a} + \ind\set{\xi>n^{1-\gb}\left(1-a/n\right)} \right)\right)
\end{split}
\end{align}
Thus, to show that $\E T_{n}/n^{\theta}$ is bounded away from $0$, it is enough to show that
\[
\sum_{n=1}^\infty \frac{1}{n}\cdot \E\left(\frac{\Theta_n-\Theta_{\fplus{n-n^\gb\xi}}}{\Theta_n}\cdot \ind\set{n-n^\gb\xi\ge a} \right) <\infty
\]
and
\[\sum_{n=1}^\infty\frac{1}{n}\pr\left(X>n\cdot \left(1-a/n\right)^{1/(1-\gb)}\right)<\infty.
\]
The second condition above is satisfied because of the assumption $\E\left[X^{1-\theta}\right] < \infty$. To verify the first condition,
note that $\Theta_{n}-\Theta_{n-k}\le C\cdot k(n-k)^{\theta-1}$ for all $0\le k<n$. Hence, we have
\begin{align*}
\sum_{n=1}^\infty \frac{1}{n} &\cdot \E\left(\frac{\Theta_n-\Theta_{\fplus{n-n^\gb\xi}}}{\Theta_n}\cdot \ind\set{n-n^\gb\xi\ge a} \right)\\
&\le C\E\left(\sum_{n=1}^\infty \frac{X^{1-\gb}}{n^{1+\theta-\gb}  \lfloor (1- \beta)(n-X)\rfloor^{1-\theta}}\cdot \ind\set{n - X\ge a} \right)\\
&\le  C\E\left(\sum_{n=1}^\infty \frac{X^{1-\gb\vee\theta}}{n^{1+\theta-\gb\vee\theta}  (n-X)^{1-\theta}}\cdot \ind\set{X\le n-1} \right) \\
&\le  C \E\left(\sum_{n\ge  X+1} \frac{X^{1-\gb\vee\theta}}{ (n-X)^{2-\gb\vee\theta}}\right)\le C\E(X^{1-\gb\vee\theta})<\infty.
\end{align*}

Here, we used the fact that, for $n> x$, we have
\begin{align*}
    n-x\ge n-n^\gb x^{1-\gb} \ge \int_{x^{1-\gb}}^{n^{1-\gb}} t^{\gb/(1-\gb)}\;dt = (1-\gb)(n-x)
\end{align*}in the first inequality and $n^{1+\theta-\gb\vee\theta}  (n-X)^{1-\theta} \ge (n-X)^{2-\beta\vee\theta}$ for $X \le n-1$ in the third inequality. This concludes the proof of (i).

(ii) Note that when $\E X=\infty$, we have a large number of extra connections to the root due to delay. \textcolor{black}{In particular, 
\[
M(\rho,n)\ge \sum_{i=1}^{n-1} \ind\set{X_i\ge\ i\,(1-1/i)^{{1/(1-\gb)}}} \ge S_n:= \sum_{i=1}^{n-1} \ind\set{X_i\ge\ i},
\]
for $n \ge 2$, where the collection $\set{X_i=\xi_{i+1}^{1/(1-\gb)}}_{i\ge 1}$} are i.i.d. Clearly, 
\begin{align*}
    \E S_n &= \sum_{i=1}^{n-1} \pr(X\ge\ i)
 = \E (\lfloor X\rfloor\wedge (n-1)) =\E(X\wedge n)+O(1)
\end{align*}
and \textcolor{black}{$\var(S_n)\le \E(S_n) =\E(X\wedge n)+O(1)$}. Using Chebyshev's inequality, for any $\eps>0$, we get $\pr\left(M(\rho,n)\ge (1-\eps)\E (X\wedge n)\right)\to 1$ as $n\to\infty$. This completes the proof.\qed

\section{Conclusion}
\label{sec:conc}
The main goal of this paper is to formulate models of network evolution that incorporate delay as well as preferential attachment and initiate the study of asymptotics of specific functionals. This class of models suggests a plethora of open directions, which we now describe. 
\begin{enumeratea}
\item {\bf Macroscopic regime:} This paper dealt with the setting $\beta <1$. The setting $\beta =1$, namely where the amount of ``information'' available to new individuals is the structure of the network a non-trivial density of time in the past, is studied in~\cite{BBDS04_macro} where it is shown that, at least when the attachment function is linear, the local weak limit of the sequence of networks is very different from the setting without delay, and where in many natural settings, the degree distribution admits a strictly heavier tail exponent from the regime without delay. 
\item {\bf Non-{\it i.i.d.}~delays:} The paper states results for {\it i.i.d.}~delays; however, if one looks at the proofs, many of the results will go through assuming a sequence of independent delays $\set{\xi_n:n\geq 1}$ that satisfy similar regularity properties, for example in the mesoscopic regime, conditions such as~\eqref{eqn:815} and~\eqref{eqn:1257} as $n\to\infty$. We leave such extensions (and limits thereof) to other interested researchers.  
    \item {\bf Non-tree regime:} To keep this paper to a manageable length, we have dealt with the setting where the network stream is a collection of trees. All of the main results for both the meso and macro regimes should be extendable to the more general network setting where vertices enter with more than one edge that they recursively use to connect via probabilistic choices with information limited by delay. The no delay setting has witnessed significant interest over the last few years~\cites{garavaglia2020local,garavaglia2022universality,banerjee2023local}. 
    \item {\bf General attachment functions:} This paper while admitting general attachment functions $f$, assumed specific smoothness properties on the function to facilitate the analysis of various functionals driving the stochastic evolution. Extending the results to more general functions, and in particular, finding settings where, for example, local asymptotics are \emph{different} from the regime without delay, would be quite exciting and worth pursuing. 
\end{enumeratea}

\section*{Acknowledgements}
Banerjee was partially supported by the NSF CAREER award DMS-2141621. Bhamidi and Sakanaveeti were partially supported by NSF DMS-2113662, DMS-2413928, and DMS-2434559. Banerjee and Bhamidi were partially funded by NSF RTG grant DMS-2134107. Part of
this material is based upon work supported by the National Science Foundation under Grant
No. DMS-1928930, while Banerjee and  Bhamidi were in residence at the Simons Laufer
Mathematical Sciences Institute in Berkeley, California, during the Spring 2025 semester. We thank Charles Cooper for writing the Python program to simulate the model and Prabhanka Deka for enlightening discussions and comments related to the paper. The simulations in Fig.~\ref{fig:sim} were plotted using the excellent Graph tools \cite{peixoto_graph-tool_2014}. {\cg We thank the referee for a detailed evaluation which significantly improved the paper. }

\bibliographystyle{chicago}\bibliography{pad.bib,pref_change_bib.bib,scaling.bib,persistence.bib,ref.bib}

@article{monch2021dag,
  title={DAG-type distributed ledgers via young-age preferential attachment},
  author={M{\"o}nch, Christian and Rizk, Amr},
  journal={arXiv preprint arXiv:2109.00511},
  year={2021}
}

@article{nakamoto2008bitcoin,
  title={Bitcoin: A peer-to-peer electronic cash system},
  author={Nakamoto, Satoshi},
  year={2008}
}

@article{popov2018tangle,
  title={The tangle},
  author={Popov, Serguei},
  journal={White paper},
  volume={1},
  number={3},
  pages={30},
  year={2018}
}

@article{muller2022tangle,
  title={Tangle 2.0 leaderless nakamoto consensus on the heaviest dag},
  author={M{\"u}ller, Sebastian and Penzkofer, Andreas and Polyanskii, Nikita and Theis, Jonas and Sanders, William and Moog, Hans},
  journal={IEEE Access},
  volume={10},
  pages={105807--105842},
  year={2022},
  publisher={IEEE}
}

@article{li2019markov,
  title={Markov processes in blockchain systems},
  author={Li, Quan-Lin and Ma, Jing-Yu and Chang, Yan-Xia and Ma, Fan-Qi and Yu, Hai-Bo},
  journal={Computational Social Networks},
  volume={6},
  number={1},
  pages={5},
  year={2019},
  publisher={Springer}
}

@Article{computers12120257,
AUTHOR = {Kahmann, Felix and Honecker, Fabian and Dreyer, Julian and Fischer, Marten and Tönjes, Ralf},
TITLE = {Performance Comparison of Directed Acyclic Graph-Based Distributed Ledgers and Blockchain Platforms},
JOURNAL = {Computers},
VOLUME = {12},
YEAR = {2023},
NUMBER = {12},
ARTICLE-NUMBER = {257},
URL = {https://www.mdpi.com/2073-431X/12/12/257},
ISSN = {2073-431X},
ABSTRACT = {Since the introduction of the first cryptocurrency, Bitcoin, in 2008, the gain in popularity of distributed ledger technologies (DLTs) has led to an increasing demand and, consequently, a larger number of network participants in general. Scaling blockchain-based solutions to cope with several thousand transactions per second or with a growing number of nodes has always been a desirable goal for most developers. Enabling these performance metrics can lead to further acceptance of DLTs and even faster systems in general. With the introduction of directed acyclic graphs (DAGs) as the underlying data structure to store the transactions within the distributed ledger, major performance gains have been achieved. In this article, we review the most prominent directed acyclic graph platforms and evaluate their key performance indicators in terms of transaction throughput and network latency. The evaluation aims to show whether the theoretically improved scalability of DAGs also applies in practice. For this, we set up multiple test networks for each DAG and blockchain framework and conducted broad performance measurements to have a mutual basis for comparison between the different solutions. Using the transactions per second numbers of each technology, we created a side-by-side evaluation that allows for a direct scalability estimation of the systems. Our findings support the fact that, due to their internal, more parallelly oriented data structure, DAG-based solutions offer significantly higher transaction throughput in comparison to blockchain-based platforms. Although, due to their relatively early maturity state, fully DAG-based platforms need to further evolve in their feature set to reach the same level of programmability and spread as modern blockchain platforms. With our findings at hand, developers of modern digital storage systems are able to reasonably determine whether to use a DAG-based distributed ledger technology solution in their production environment, i.e., replacing a database system with a DAG platform. Furthermore, we provide two real-world application scenarios, one being smart grid communication and the other originating from trusted supply chain management, that benefit from the introduction of DAG-based technologies.},
DOI = {10.3390/computers12120257}
}

@article{xiao2024accelerating,
  title={Accelerating tip selection in burst message arrivals for DAG-based blockchain systems},
  author={Xiao, Xun},
  journal={IEEE Transactions on Services Computing},
  volume={17},
  number={2},
  pages={392--405},
  year={2024},
  publisher={IEEE}
}

@inproceedings{ahmed2024optimization,
  title={Optimization of Weighted Random Walk Tip Selection Algorithm on the IOTA Tangle},
  author={Ahmed, Imen and Turki, Mariem and Baklouti, Mouna},
  booktitle={2024 IEEE/ACS 21st International Conference on Computer Systems and Applications (AICCSA)},
  pages={1--7},
  year={2024},
  organization={IEEE}
}

@article{garavaglia2020local,
  title={Local weak convergence for PageRank},
  author={Garavaglia, Alessandro and van der Hofstad, Remco and Litvak, Nelly},
  journal={The Annals of Applied Probability},
  volume={30},
  number={1},
  pages={40--79},
  year={2020},
  publisher={JSTOR}
}

@article{krapivsky2005network,
  title={Network growth by copying},
  author={Krapivsky, Pavel L and Redner, Sidney},
  journal={Physical Review E—Statistical, Nonlinear, and Soft Matter Physics},
  volume={71},
  number={3},
  pages={036118},
  year={2005},
  publisher={APS}
}

@misc{dey2022asymptotic,
      title={On an Asymptotic Criterion for Blockchain Design: The Asynchronous Composition Model}, 
      author={Partha S. Dey and Aditya Gopalan},
      year={2022},
      eprint={2202.05080},
      archivePrefix={arXiv},
      primaryClass={math.PR}
}

@article{baccelli2019renewal,
  title={Renewal processes, population dynamics, and unimodular trees},
  author={Baccelli, Fran{\c{c}}ois and Sodre, Antonio},
  journal={Journal of Applied Probability},
  volume={56},
  number={2},
  pages={339--357},
  year={2019},
  publisher={Cambridge University Press}
}

@article{king2021fluid,
  title={The fluid limit of a random graph model for a shared ledger},
  author={King, Christopher},
  journal={Advances in Applied Probability},
  volume={53},
  number={1},
  pages={81--106},
  year={2021},
  publisher={Cambridge University Press}
}

@article{krapvisky2023magic,
  title={The Magic of Networks Grown by Redirection},
  author={Krapvisky, PL and Redner, S},
  journal={arXiv preprint arXiv:2305.10628},
  year={2023}
}

@article{banerjee2022co,
  title={Co-evolving dynamic networks},
  author={Banerjee, Sayan and Bhamidi, Shankar and Huang, Xiangying},
  journal={arXiv preprint arXiv:2203.11877},
  year={2022}
}

@article{benjamini-schramm,
  title={Recurrence of distributional limits of finite planar graphs},
  author={Benjamini, Itai and Schramm, Oded},
  journal={Selected Works of Oded Schramm},
  pages={533--545},
  year={2011},
  publisher={Springer}
}

@article{aldous-fringe,
  title={Asymptotic fringe distributions for general families of random trees},
  author={Aldous, David},
  journal={The Annals of Applied Probability},
  pages={228--266},
  year={1991},
  publisher={JSTOR}
}

@incollection{aldous-steele-obj,
  title={The objective method: probabilistic combinatorial optimization and local weak convergence},
  author={Aldous, David and Steele, J Michael},
  booktitle={Probability on discrete structures},
  pages={1--72},
  year={2004},
  publisher={Springer}
}

@misc{BBDS04_macro,
      title={Network evolution with Macroscopic Delays: local weak convergence and condensation}, 
      author={Banerjee, Sayan and Bhamidi, Shankar and  Dey, Partha  and Sakanaveeti, Akshay},
      year={2024},
      archivePrefix={arXiv},
      primaryClass={math.PR}
}

@article{fk23,
      title={Fluid limit of a model for distributed ledger with random delay}, 
      author={Jiewei Feng and Christopher King},
      year={2023},
      eprint={2311.03475},
      archivePrefix={arXiv},
      primaryClass={math.PR},
      url={https://arxiv.org/abs/2311.03475}, 
}

@book{billingsley2013convergence,
  title={Convergence of probability measures},
  author={Billingsley, Patrick},
  year={2013},
  publisher={John Wiley \& Sons}
}

@inproceedings{shah2012rumor,
  title={Rumor centrality: a universal source detector},
  author={Shah, Devavrat and Zaman, Tauhid},
  booktitle={Proceedings of the 12th ACM SIGMETRICS/PERFORMANCE joint international conference on Measurement and Modeling of Computer Systems},
  pages={199--210},
  year={2012}
}

@article{bubeck2017finding,
  title={Finding Adam in random growing trees},
  author={Bubeck, S{\'e}bastien and Devroye, Luc and Lugosi, G{\'a}bor},
  journal={Random Structures \& Algorithms},
  volume={50},
  number={2},
  pages={158--172},
  year={2017},
  publisher={Wiley Online Library}
}

@article{holme2012temporal,
  title={Temporal networks},
  author={Holme, Petter and Saram{\"a}ki, Jari},
  journal={Physics reports},
  volume={519},
  number={3},
  pages={97--125},
  year={2012},
  publisher={Elsevier}
}

@book{masuda2016guidance,
  title={A Guidance to Temporal Networks},
  author={Masuda, Naoki and Lambiotte, Renaud},
  year={2016},
  publisher={World Scientific}
}

@article{mori2005maximum,
  title={The maximum degree of the Barab{\'a}si--Albert random tree},
  author={M{\'o}ri, Tam{\'a}s F},
  journal={Combinatorics, Probability and Computing},
  volume={14},
  number={3},
  pages={339--348},
  year={2005},
  publisher={Cambridge University Press}
}

@article{banerjee2020persistence,
  title={Persistence of hubs in growing random networks},
  author={Banerjee, Sayan and Bhamidi, Shankar},
  journal={Probability Theory and Related Fields},
  pages={1--63},
  year={2021},
  publisher={Springer}
}

@book{drmota2009random,
  title={Random trees: an interplay between combinatorics and probability},
  author={Drmota, Michael},
  year={2009},
  publisher={Springer Science \& Business Media}
}

@article{smythe1995survey,
  title={A survey of recursive trees},
  author={Smythe, Robert T and Mahmoud, Hosam M},
  journal={Theory of Probability and Mathematical Statistics},
  number={51},
  pages={1--28},
  year={1995},
  publisher={Providence, American Mathematical Society, 1974-}
}

@article{rudas2007random,
  title={Random trees and general branching processes},
  author={Rudas, Anna and T{\'o}th, B{\'a}lint and Valk{\'o}, Benedek},
  journal={Random Structures \& Algorithms},
  volume={31},
  number={2},
  pages={186--202},
  year={2007},
  publisher={Wiley Online Library}
}

@book {jagers-ctbp-book,
    AUTHOR = {Jagers, Peter},
     TITLE = {Branching processes with biological applications},
      NOTE = {Wiley Series in Probability and Mathematical
              Statistics---Applied Probability and Statistics},
 PUBLISHER = {Wiley-Interscience [John Wiley \& Sons], London-New
              York-Sydney},
      YEAR = {1975},
     PAGES = {xiii+268},
      ISBN = {0-471-43652-6},
   MRCLASS = {60J80 (60J85 92A15)},
  MRNUMBER = {0488341},
MRREVIEWER = {Dean H. Fearn},
}

@article{jagers1984growth,
  title={The growth and composition of branching populations},
  author={Jagers, Peter and Nerman, Olle},
  journal={Advances in Applied Probability},
  volume={16},
  number={2},
  pages={221--259},
  year={1984},
  publisher={Cambridge University Press}
}

@article{nerman1981convergence,
  title={On the convergence of supercritical general ({CMJ}) branching processes},
  author={Nerman, Olle},
  journal={Probability Theory and Related Fields},
  volume={57},
  number={3},
  pages={365--395},
  year={1981},
  publisher={Springer}
}

@inproceedings{antunes2023learning,
  title={Learning Attribute Distributions Through Random Walks},
  author={Antunes, Nelson and Bhamidi, Shankar and Pipiras, Vladas},
  booktitle={Complex Networks and Their Applications XI: Proceedings of The Eleventh International Conference on Complex Networks and their Applications: COMPLEX NETWORKS 2022—Volume 2},
  pages={17--29},
  year={2023},
  organization={Springer}
}

@article{peixoto_graph-tool_2014,
         title = {The graph-tool python library},
         url = {http://figshare.com/articles/graph_tool/1164194},
         doi = {10.6084/m9.figshare.1164194},
         urldate = {2014-09-10},
         journal = {figshare},
         author = {Peixoto, Tiago P.},
         year = {2014},
         keywords = {all, complex networks, graph, network, other}}

@article{garavaglia2022universality,
  title={Universality of the local limit of preferential attachment models},
  author={Garavaglia, Alessandro and Hazra, Rajat Subhra and van der Hofstad, Remco and Ray, Rounak},
  journal={arXiv preprint arXiv:2212.05551},
  year={2022}
}

@article{banerjee2023local,
  title={Local weak limits for collapsed branching processes with random out-degrees},
  author={Banerjee, Sayan and Deka, Prabhanka and Olvera-Cravioto, Mariana},
  journal={arXiv preprint arXiv:2302.00562},
  year={2023}
}

@book{van2023random,
  title={Random graphs and complex networks, {V}olume II},
  author={Van Der Hofstad, Remco},
  year={2023},
  publisher={Cambridge University Press}
}

@string{science = {Science}}

@article{karimi2022minorities,
  title={Minorities in networks and algorithms},
  author={Karimi, Fariba and Oliveira, Marcos and Strohmaier, Markus},
  journal={arXiv preprint arXiv:2206.07113},
  year={2022}
}

@inproceedings{wagner2017sampling,
  title={Sampling from social networks with attributes},
  author={Wagner, Claudia and Singer, Philipp and Karimi, Fariba and Pfeffer, J{\"u}rgen and Strohmaier, Markus},
  booktitle={Proceedings of the 26th international conference on world wide web},
  pages={1181--1190},
  year={2017}
}

@inproceedings{espin2018towards,
  title={Towards quantifying sampling bias in network inference},
  author={Esp{\'\i}n-Noboa, Lisette and Wagner, Claudia and Karimi, Fariba and Lerman, Kristina},
  booktitle={Companion Proceedings of the The Web Conference 2018},
  pages={1277--1285},
  year={2018}
}

@article{karimi2018homophily,
  title={Homophily influences ranking of minorities in social networks},
  author={Karimi, Fariba and G{\'e}nois, Mathieu and Wagner, Claudia and Singer, Philipp and Strohmaier, Markus},
  journal={Scientific reports},
  volume={8},
  number={1},
  pages={1--12},
  year={2018},
  publisher={Nature Publishing Group}
}

@article{krapivsky2001organization,
  title={Organization of growing random networks},
  author={Krapivsky, Paul L and Redner, Sidney},
  journal={Physical Review E},
  volume={63},
  number={6},
  pages={066123},
  year={2001},
  publisher={APS}
}

@article{sethuraman2019sublinear,
author = {Choi, Jihyeok and Sethuraman, Sunder and Venkataramani, Shankar C.},
title = {A scaling limit for the degree distribution in sublinear preferential attachment schemes},
journal = {Random Structures \& Algorithms},
volume = {48},
number = {4},
pages = {703-731},
keywords = {preferential attachment, random graphs, degree distribution, fluid limit, law of large numbers, sublinear weights, dynamical system, semigroup, infinite, ODE, uniqueness},
doi = {https://doi.org/10.1002/rsa.20615},
url = {https://onlinelibrary.wiley.com/doi/abs/10.1002/rsa.20615},
eprint = {https://onlinelibrary.wiley.com/doi/pdf/10.1002/rsa.20615},
year = {2016}}

@article{holmgren2017fringe,
  title={Fringe trees, Crump--Mode--Jagers branching processes and $ m $-ary search trees},
  author={Holmgren, Cecilia and Janson, Svante and others},
  journal={Probability Surveys},
  volume={14},
  pages={53--154},
  year={2017},
  publisher={The Institute of Mathematical Statistics and the Bernoulli Society}
}

@article{bubeck-mossel,
  title={On the influence of the seed graph in the preferential attachment model},
  author={Bubeck, S{\'e}bastien and Mossel, Elchanan and R{\'a}cz, Mikl{\'o}s Z},
  journal={Network Science and Engineering, IEEE Transactions on},
  volume={2},
  number={1},
  pages={30--39},
  year={2015},
  publisher={IEEE}
}

@article{resnick2015asymptotic,
  title={Asymptotic Normality of Degree Counts in a Preferential Attachment Model},
  author={Resnick, Sidney and Samorodnitsky, Gennady},
  journal={arXiv preprint arXiv:1504.07328},
  year={2015}
}

@book {athreya1972,
    AUTHOR = {Athreya, Krishna B. and Ney, Peter E.},
     TITLE = {Branching processes},
      NOTE = {Die Grundlehren der mathematischen Wissenschaften, Band 196},
 PUBLISHER = {Springer-Verlag, New York-Heidelberg},
      YEAR = {1972},
     PAGES = {xi+287},
   MRCLASS = {60J80},
  MRNUMBER = {0373040 (51 \#9242)},
MRREVIEWER = {C. C. Heyde},
}

@book{newman2010networks,
  title={Networks: an introduction},
  author={Newman, Mark},
  year={2010},
  publisher={Oxford University Press}
}

@book{durrett2007random,
  title={Random graph dynamics},
  author={Durrett, Richard},
  volume={20},
  year={2007},
  publisher={Cambridge university press}
}

@article{van2009random,
  title={Random graphs and complex networks},
  author={Van Der Hofstad, Remco},
  journal={Available on http://www. win. tue. nl/rhofstad/NotesRGCN. pdf},
  year={2009}
}

@article{albert2002statistical,
  title={Statistical mechanics of complex networks},
  author={Albert, R{\'e}ka and Barab{\'a}si, Albert-L{\'a}szl{\'o}},
  journal={Reviews of modern physics},
  volume={74},
  number={1},
  pages={47},
  year={2002},
  publisher={APS}
}

@article{barabasi1999emergence,
  title={Emergence of scaling in random networks},
  author={Barab{\'a}si, A.L. and Albert, R.},
  journal={science},
  volume={286},
  number={5439},
  pages={509--512},
  year={1999},
  publisher={American Association for the Advancement of Science}
}

@article{newman2003structure,
  title={The structure and function of complex networks},
  author={Newman, M.E.J.},
  journal={SIAM review},
  pages={167--256},
  year={2003},
  publisher={JSTOR}
}

@article{dorogovtsev2002evolution,
  title={Evolution of networks},
  author={Dorogovtsev, S.N. and Mendes, J.F.F.},
  journal={Advances in physics},
  volume={51},
  number={4},
  pages={1079--1187},
  year={2002},
  publisher={Taylor \& Francis}
}

@book{bollobas2001random,
  title={Random Graphs},
  author={Bollob{\'a}s, B{\'e}la},
  number={73},
  year={2001},
  publisher={Cambridge University Press}
}

@book {durrett-rg-book,
    AUTHOR = {Durrett, Rick},
     TITLE = {Random graph dynamics},
    SERIES = {Cambridge Series in Statistical and Probabilistic Mathematics},
 PUBLISHER = {Cambridge University Press},
   ADDRESS = {Cambridge},
      YEAR = {2007},
     PAGES = {x+212},
      ISBN = {978-0-521-86656-9; 0-521-86656-1},
   MRCLASS = {05C80 (05-02 60-02 60C05 60G50 60K35 82C41)},
  MRNUMBER = {2271734 (2008c:05167)},
MRREVIEWER = {Michael Krivelevich},
}

@article {jagers-nerman-1,
    AUTHOR = {Jagers, Peter and Nerman, Olle},
     TITLE = {The growth and composition of branching populations},
   JOURNAL = {Adv. in Appl. Probab.},
  FJOURNAL = {Advances in Applied Probability},
    VOLUME = {16},
      YEAR = {1984},
    NUMBER = {2},
     PAGES = {221--259},
      ISSN = {0001-8678},
     CODEN = {AAPBBD},
   MRCLASS = {60J80},
  MRNUMBER = {742953 (86j:60193)},
       DOI = {10.2307/1427068},
       URL = {http://dx.doi.org/10.2307/1427068},
}

@article {jagers-nerman-2,
    AUTHOR = {Jagers, Peter and Nerman, Olle},
     TITLE = {Limit theorems for sums determined by branching and other
              exponentially growing processes},
   JOURNAL = {Stochastic Process. Appl.},
  FJOURNAL = {Stochastic Processes and their Applications},
    VOLUME = {17},
      YEAR = {1984},
    NUMBER = {1},
     PAGES = {47--71},
      ISSN = {0304-4149},
     CODEN = {STOPB7},
   MRCLASS = {60J80},
  MRNUMBER = {738768 (86b:60138)},
MRREVIEWER = {Douglas P. Kennedy},
       DOI = {10.1016/0304-4149(84)90311-9},
       URL = {http://dx.doi.org/10.1016/0304-4149(84)90311-9},
}

\end{document}